\title{Controlling the Flow: Stability and Convergence for Stochastic Gradient Descent with Decaying Regularization}
\newcommand{\sfrac}[2]{\mbox{$\frac{#1}{#2}$}}
\newcommand{\1}{1\hspace{-0.098cm}\mathrm{l}}
\newcommand{\cF}{\mathcal{F}}
\newcommand{\cN}{\mathcal{N}}
\newcommand{\cO}{\mathcal{O}}
\newcommand{\cX}{\mathcal{X}}
\newcommand{\E}{\mathbb{E}}
\newcommand{\N}{\mathbb{N}}
\newcommand{\R}{\mathbb{R}}
\newcommand{\eps}{\varepsilon}
\DeclareMathOperator*{\argmin}{arg\,min}
\definecolor{sw}{rgb}{1,0.53,0.0}
\author{
  Sebastian Kassing$^\ast$ \\
  Institute of Mathematics\\
  Technical University of Berlin\\
  10623 Berlin, Germany\\
  \href{mailto:kassing@math.tu-berlin.de}{\texttt{kassing@math.tu-berlin.de}}
  \And
  Simon Weissmann$^\ast$ \\
  Institute of Mathematics\\
  University of Mannheim\\
  68159 Mannheim, Germany\\
  \href{mailto:simon.weissmann@uni-mannheim.de}{\texttt{simon.weissmann@uni-mannheim.de}}
  \And
  Leif Döring \\
  Institute of Mathematics\\
  University of Mannheim\\
  68159 Mannheim, Germany\\
  \href{mailto:leif.doering@uni-mannheim.de}{\texttt{leif.doering@uni-mannheim.de}}
}
\newtheorem{theorem}{Theorem}[section]
\newaliascnt{lemma}{theorem}
\newtheorem{lemma}[lemma]{Lemma}
\crefname{lemma}{lemma}{lemmas}
\Crefname{lemma}{Lemma}{Lemmas}
\newaliascnt{corollary}{theorem}
\newtheorem{corollary}[corollary]{Corollary}
\crefname{corollary}{corollary}{corollaries}
\Crefname{corollary}{Corollary}{Corollaries}
\newaliascnt{definition}{theorem}
\crefname{definition}{definition}{definitions}
\Crefname{definition}{Definition}{Definitions}
\newaliascnt{proposition}{theorem}
\newtheorem{proposition}[proposition]{Proposition}
\crefname{proposition}{proposition}{propositions}
\Crefname{proposition}{Proposition}{Propositions}
\newaliascnt{remark}{theorem}
\newtheorem{remark}[remark]{Remark}
\crefname{remark}{remark}{remarks}
\Crefname{remark}{Remark}{Remarks}
\newaliascnt{example}{theorem}
\newtheorem{example}[example]{Example}
\crefname{example}{example}{examples}
\Crefname{example}{Example}{Examples}
\newaliascnt{assumption}{theorem}
\newtheorem{assumption}[assumption]{Assumption}
\crefname{assumption}{assumption}{assumptions}
\Crefname{assumption}{Assumption}{Assumptions}
\crefname{appendix}{Appendix}{Appendices}
\Crefname{appendix}{Appendix}{Appendices}
\begin{document}

\maketitle

\begin{abstract}
    The present article studies the minimization of convex, $L$-smooth functions defined on a separable real Hilbert space. We analyze regularized stochastic gradient descent (reg-SGD), a variant of stochastic gradient descent that uses a Tikhonov regularization with time-dependent, vanishing regularization parameter. We prove strong convergence of reg-SGD to the minimum-norm solution of the original problem without additional boundedness assumptions. Moreover, we quantify the rate of convergence and optimize the interplay between step-sizes and regularization decay. Our analysis reveals how vanishing Tikhonov regularization controls the flow of SGD and yields stable learning dynamics, offering new insights into the design of iterative algorithms for convex problems, including those that arise in ill-posed inverse problems. We validate our theoretical findings through numerical experiments on image reconstruction and ODE-based inverse problems.
\end{abstract}

\section{Introduction}
In\makeatletter{\renewcommand*{\@makefnmark}{}
\footnotetext{$^\ast$ These authors contributed equally to this work.}\makeatother} this work we study the unconstrained optimization problem 
\begin{equation}\label{eq:opti}
\min_{x\in\cX}\ f(x)\,,
\end{equation}
where $(\cX, \langle \cdot,\cdot\rangle_\cX)$ is a separable real Hilbert space with inner product $\langle \cdot,\cdot\rangle_\cX$ and induced norm $\|x\|^2_\cX = \langle x,x\rangle_\cX$. The objective function $f: \cX \to \R$ will be assumed to be differentiable, convex, and $L$-smooth with $\argmin_{x\in\cX} f(x) \neq \emptyset$. Moreover, we will always denote by $x_\ast\in\argmin_{x\in\cX} f(x)$ the minimum-norm solution, i.e. a minimum with $\|x_\ast\|_{\mathcal X}\le \|\hat x\|_{\mathcal X}$ for all $\hat x\in\argmin_{x\in\cX} f(x)$. A common strategy for finding a point close to the minimum-norm solution is to employ regularization techniques. One popular approach from the optimization literature is to include Tikhonov regularization into \eqref{eq:opti} in the form of
\begin{equation}
    \min_{x\in\mathcal X}\ f_\lambda(x), \quad f_\lambda(x):= f(x)+\frac{\lambda}2 \|x\|_{\mathcal X}^2\,,
\end{equation}
where $\lambda\ge0$ is called the regularization parameter. Since the regularized objective function $f_\lambda$ is $\lambda$-strongly convex for any $\lambda>0$, there exists a unique minimum 
$x_\lambda = \argmin_{x\in\mathcal X} f_\lambda(x)$ and many first order methods, such as stochastic gradient descent (SGD), are able to efficiently find $x_\lambda$.\smallskip

Tikhonov regularization is a simple but effective method that appears in various contexts, such as  statistics (e.g. ridge regression, \cite{Hoerl}), classical inverse problems \cite{EHN1996}, including parameter estimation in partial differential equations \cite{isakov2006inverse} and image reconstruction \cite{ito2014inverse,bertero2021introduction}, dating all the way back to Tikhonov \cite{tikhonov1943stability}. 
In the context of training neural networks, Tikhonov regularization is known under the name weight decay as the method decreases the norm of the neural network weights. One early reference is \cite{Krogh}, for more recent work on the effect of weight decay on generalization we refer to  \cite{Shalev}, for LLM training to \cite{Brown}, and for a very recent experimental deep learning study to \cite{Angelo}. It is still a very much open problem to fully understand the different effects of weight decay, both from a practical but also the theoretical point of view in different optimization settings.
\smallskip

 Recalling that  $\|x_\lambda\|_{\mathcal X}\le\|x_{\lambda'}\|_{\mathcal X} \le \|x_\ast\|_{\mathcal X}$ for $\lambda'<\lambda$, see for instance~\cite{attouch1996viscosity}, there is a trade-off between choosing $\lambda$ large and small. Large $\lambda$ speeds up convergence with the price of finding solutions that are too strongly regularized. On the other hand  
  $\lim_{\lambda\to0} \|x_\lambda-x_\ast\|_{\mathcal X} = 0$ suggests to turn down the regularization over time in order to ensure convergence to the minimum-norm solution. The present article provides a rigorous theoretical analysis for Tikhonov regularized stochastic gradient descent (reg-SGD) with decreasing (non-constant) regularization schedule $(\lambda_k)_{k \in \N_0}$. We show how to tune step-size and regularization schedules in order to achieve strong convergence to $x_\ast$. By strong convergence we refer to the convergence of the iterates $X_k$ in the sense $\lim_{k\to\infty}\|X_k-x_\ast\|_{\cX}=0$. For practical purposes, we derive how to optimally tune the decay rates of polynomial schedules.

\subsection{Fixing the setup}
Let us recall the classical Tikhonov regularized gradient descent scheme (reg-GD)
\begin{equation} \label{eq:regGD}
X_{k} = X_{k-1} -\alpha_{k} \big(\nabla f(X_{k-1}) + \lambda_k X_{k-1}\big),
\end{equation}
which for constant $\lambda$ converges to $x_\lambda$ under suitable conditions on the step-size sequence $\alpha$. In many applications the gradient cannot be computed (or observed) exactly, instead only gradients with noisy perturbation are available.
This leads to two equivalent formulations: one in which a noisy perturbation $D_k$ is added to the true gradient, and another in which the gradient is replaced by an estimated gradient $\widehat{\nabla f(X_{k-1})}$. These formulations are equivalent if we define the noise as $D_k=\widehat{\nabla f(X_{k-1})}-\nabla f(X_{k-1})$.
We thus stick to the first setting but use the more accessible second notation for the pseudocode of \Cref{algo} below. 

In this article, we study the regularized \textit{stochastic} gradient descent scheme (reg-SGD) with \textit{decreasing} regularization parameter $\lambda$. Let $(\mathcal F_k)_{k \in \N_0}$ be a filtration and $(X_k)_{k \in \N_0}$ be an adapted sequence defined recursively by
\begin{equation} \label{eq:regSGD}
X_{k} = X_{k-1} -\alpha_{k} \big(\nabla f(X_{k-1}) + \lambda_{k} X_{k-1}+D_{k}\big),
\end{equation}
where $\E[\|X_0\|_{\cX}^2]<\infty$, $\alpha$ and $\lambda$ are sequences of (deterministic or random) non-negative reals, and $D:=(D_k)_{k \in \N}$ is an adapted sequence of martingale differences, i.e.
$\E[D_{k} \mid\mathcal F_{k-1}]=0$ for all $k \in \N$. 
More precisely, in \Cref{thm:convgenAS} we assume the sequences $\alpha:=(\alpha_k)_{k \in \N}$ and $\lambda:=(\lambda_k)_{k \in \N}$ to be predictable stochastic processes, i.e. $\alpha_k$ and $\lambda_k$ are $\mathcal F_{k-1}$-measurable for all $k \in \N$. The SGD formalism includes for instance stochastic gradients in finite-sum problems, where a random data point’s gradient estimates the full gradient, see \Cref{exa:finitesum} below, and in expected risk minimization, where gradients are computed using samples from the data distribution.

\begin{algorithm}[h]
\caption{Regularized Stochastic Gradient Descent (reg-SGD)}
\begin{algorithmic}[1] \label{algo}
\REQUIRE Initial guess $X_0$, number of iterations $N$, step-size schedule $\alpha$, regularization schedule $\lambda$
\FOR{$k = 1$ to $N$}
        \STATE Compute unbiased gradient estimates: $\widehat{\nabla f(X_{k-1})} \approx \nabla f(X_{k-1})$.
        \STATE Update parameters: $X_{k} = X_{k-1}-\alpha_k \big( \widehat{\nabla f(X_{k-1})}+\lambda_k X_{k-1}\big)$
\ENDFOR
\RETURN $X_N$
\end{algorithmic}
\end{algorithm}
We will further impose a second moment condition on the stochastic error terms $(D_k)_{k \in \N}$, which allows the noise term to grow with the optimality gap and the gradient norm. We emphasize that throughout this work we will not impose any additional boundedness assumptions on the iterates of the reg-SGD scheme. Therefore, a priori the noise term might be unbounded. However, in the proofs below we show that, under weak assumptions on the step-size and regularization schedules, the additional regularization term implies almost sure boundedness of the iterates. This contrasts the dynamical behavior of standard SGD without regularization.

\begin{assumption}\label{ass:smoothconvex}
    The objective function $f:\cX\to\R$ is convex, continuously differentiable, and $L$-smooth. The latter means that $\nabla f:\cX\to\cX$ is globally $L$-Lipschitz continuous, i.e.~there exists $L>0$ such that $\|\nabla f(x)-\nabla f(y)\|_\cX\le L\|x-y\|_{\cX}$ for all $x,y\in \cX$. Furthermore, we assume that $\argmin_{x\in\cX} f(x) \neq \emptyset$ and denote by $x_\ast\in\argmin_{x\in\cX} f(x)$ the minimum-norm solution.
\end{assumption}
For the noise sequence a typical ABC-type assumption is posed. The assumption is an important relaxation of bounded noise and can be verified in many applications \cite{khaled2022better, gower2021sgd}.
\begin{assumption} \label{assu:ABC}
    There exist constants $A,C \ge 0$ such that
    \[
        \E\big[\|D_{k}\|_{\mathcal X}^2 \mid \mathcal F_{k-1}\big] \le A (f(X_{k-1})-f(x_\ast)) + C,\quad k\in\N.
    \]
\end{assumption}
In contrast to the classical ABC condition, only two constants $A$ and $C$ appear. In Euclidean space when $f$ is differentiable, $L$-smooth, and bounded below, one has
    \begin{equation}\label{eq:009}
        \|\nabla f(x)\|^2 \le 2 L (f(x)-f(x_\ast)) \quad \text{ for all } x \in \R^d,
    \end{equation}
    see e.g. Lemma C.1 in~\cite{Wojtowytsch2023}. The exact same argument (combining $L$-smoothness and the fundamental theorem of calculus) extends readily to the Hilbert space setting. Therefore, \Cref{assu:ABC} is equivalent to the classical ABC-condition 
    \[
        \E\big[\|D_{k}\|_{\mathcal X}^2 
        \mid \mathcal F_{k-1}\big] \le A (f(X_{k-1})-f(x_\ast)) + B \|\nabla f(X_{k-1})\|_{\mathcal X}^2 + C, \quad k \in \N,
    \]
    for some $A,B,C \ge 0$.

\begin{example}[Mini-batch estimator for finite-sum problems] \label{exa:finitesum}
Consider the finite-sum optimization problem
\[
\min_{x \in \mathbb{R}^d} f(x)
= \frac{1}{N}\sum_{i=1}^N f_i(x),
\]
where, for all $i=1, \dots, N$, $ f_i : \mathbb{R}^d \to \mathbb{R} $ is convex and $L_i$-smooth.
At iteration $k \in \N$, we can define a mini-batch estimator with mini-batch size $M \in \N$ via
$
g_k = \frac{1}{M}\sum_{i \in M} \nabla f_{I_{i,k}}(X_{k-1}),
$
where $(I_{i,k})_{i,k \in \N}$ is a family of iid. random variables that are uniformly distributed on $\{1, \dots, N\}$. The corresponding gradient noise is defined as
$
D_k = \frac{1}{M}\sum_{i \in M} (\nabla f_{I_{i,k}}(X_{k-1})- \nabla f(X_{k-1}))
$
and satisfies
\begin{align} \label{eq:finitesum}
\mathbb{E}\left[\|D_k\|^2 \mid \mathcal{F}_{k-1}\right]
\le
\frac{4 L}{M}\bigl(f(X_{k-1}) - f(x_\ast)\bigr)
+
\frac{2\sigma_\ast^2}{M},
\end{align}
where $ \bar L = \frac{1}{N}\sum_{i=1}^N L_i$ and
$\sigma_\ast^2 = \frac{1}{N}\sum_{i=1}^N \|\nabla f_i(x_\ast)\|^2$. We will prove \eqref{eq:finitesum} in \Cref{lem:finitesum} below.
\end{example}

\subsection{Contribution}\label{sec:contribution}
The present article continuous a line of research on convergence properties for regularized differential equation based optimization flows (see e.g. \cite{attouch1996dynamical,attouch2022acceleratedgradientmethodsstrong, maulen2024tikhonov} and the references therein). We show that the discretization of the stochastic differential equation setting considered in \cite{maulen2024tikhonov} yields a simple iterative scheme with similar convergence guarantees. It is non-trivial to establish a discrete iterative scheme that convergences fast to the minimum-norm solution $x_\ast$, as the step-size schedules $\alpha$ and regularization schedules $\lambda$ need to be balanced very carefully. In fact, while the assumptions we pose on the step-size schedule $\alpha$ are similar to the classical Robbins-Monro step-size conditions for convergence of SGD \cite{monro51}, the regularization schedule $\lambda$ has to satisfy two conflicting objectives. For a slowly decaying (almost constant) sequence $\lambda$, one can use the strong convexity of the regularized objective function $f_\lambda$ for all $\lambda >0$ to show that $X_k$ is close to the minimum $x_{\lambda_k}$ of $f_{\lambda_k}$. However, this significantly slows down convergence to the minimum-norm solution due to the slow convergence of $\|x_{\lambda_k}-x_\ast\|_\cX$.
A crucial step in the analysis of reg-SGD will be to balance the two error terms
\[
    \|X_k-x_\ast\|_\cX \le \|X_k-x_{\lambda_k}\|_\cX + \|x_{\lambda_k}-x_\ast\|_\cX
\]
\begin{wrapfigure}{h}{0.5\textwidth}
\vspace{-2mm}
    \includegraphics[width=0.5\textwidth]{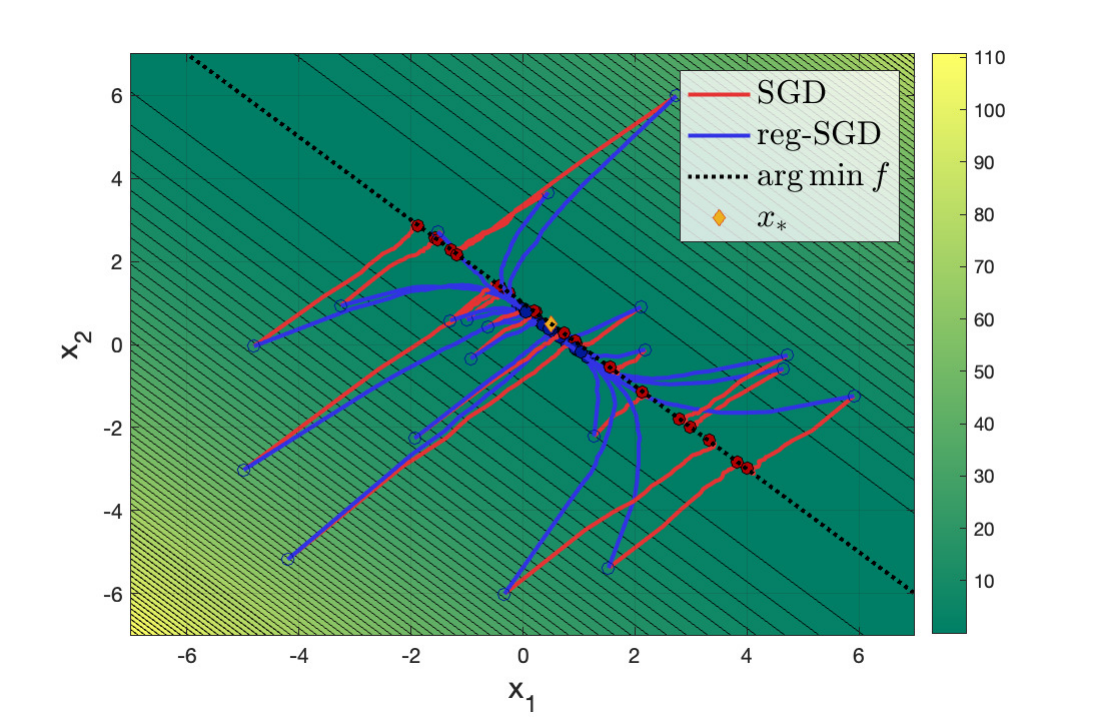}
    \vspace{-6mm}
    \caption{A comparison of SGD and reg-SGD, reg-SGD converges to $x_\ast$ for all initializations.}\label{fig:trajectories}
\end{wrapfigure}
appearing on the right-hand side. The main achievement of this article is to carry out last-iterate estimates that yield $L^2$ and almost sure convergence rates. In contrast to non-regularized SGD we obtain convergence to the minimum-norm solution (not just some solution) of the optimization problem while obtaining comparable rates for the optimality gap. The simulation in \Cref{fig:trajectories} on the right shows the effect for $f(x_1,x_2)=(x_1+x_2-1)^2$ with noisy gradients perturbed by independent Gaussians. While vanilla SGD converges to some minima (red dots), reg-SGD converges to the minimum-norm solution. Another important theoretical property that we reveal is that reg-SGD is more stable. It turns out that the iterative scheme is automatically bounded and cannot explode.

\smallskip

\textbf{Summary of main contributions:}
\begin{itemize}
    \item $L^2$- and almost sure-convergence proof for last-iterates of reg-SGD to the minimum-norm solution under the ABC-condition \textit{without} additional boundedness assumption on the stochastic iteration.
    \item $L^2$- and almost sure-convergence rates for polynomial step-size and regularization schedules.
    \item Experiments that show the stability of our  polynomial step-size and regularization schedules.
\end{itemize}

\subsection{Related work}
To guide the reader we collect related articles and emphasize the line of research which we continue.

\textbf{Deterministic Tikhonov regularization:} The literature contains a number of articles on Tikhonov regularization with decreasing regularization parameter. 
For instance, in the context of deterministic optimization, this includes the analysis of first order ODEs~\cite{attouch1996dynamical,cominetti2008strong,attouch2022acceleratedgradientmethodsstrong} and second order ODEs \cite{jendoubi2010asymptotically,attouch2018combining,alecsa2021tikhonov,  attouch2022acceleratedgradientmethodsstrong, attouch2024convex}. Extensions to stochastic optimization in continuous time, in particular an analysis of the stochastic differential inclusion process, have been considered in~\cite{maulen2024tikhonov}. Many statements are based on results for the solution curve $(x_\lambda)_{\lambda \ge 0}$ derived in~\cite{attouch1996viscosity}. More generally, differential inclusions for constrained convex optimization problems have been intensively studied in \cite{ATTOUCH20101315,ACP2011,attouch2011prox,czarnecki2016splitting,peypouquet_coupling_2012}. In \Cref{app:deterministic}, we provide more details and illustrate the relation to Tikhonov regularization. Recently, the gradient flow for a fixed Tikhonov regularization has been analyzed in~\cite{boursier2025theoretical}. For small values of the regularization parameter $\lambda$, the optimization dynamics can be decomposed into two distinct phases: an initial fast convergence toward the set of minima, followed by a slow drift along this set that selects the minimizer with the smallest $\ell^2$-norm.

In this article, we extend
methodologies for steepest descent flows to the stochastic discrete-time setting. 
Discrete time algorithms with decaying Tikhonov regularization have been analyzed in the context of iterative regularization schemes. For instance, in Chapter 5 of \cite{bakushinsky1994illposed} iterative regularization is discussed to solve variational inequalities covering \eqref{eq:regGD} for convex $f$ as a special case. Under certain conditions on the regularization decay and the step-sizes, strong convergence to the minimum-norm solution can be guaranteed. A related analysis has been considered for non-linear inverse problems \cite{bakushinsky2004iterative}. In the specific application to inverse problems, \eqref{eq:regGD} is also known as the modified Landweber iteration, where convergence is mainly studied for nonlinear forward models using a-priori and a-posteriori stopping rules \cite{bakushinsky2004iterative,KaltenbacherNeubauerScherzer2008,scherzer1998modified}. The theoretical analysis is conducted in a non-convex setting and relies on the so-called tangential cone condition.

\textbf{Stochastic gradient descent (with regularization):} For recent results on convergence of SGD for possibly non-convex optimization landscapes we refer the reader to \cite{mertikopoulos2020sure, dereich2021convergence,weissmann2025almost} and references therein. Note that due to the non-convexity, only convergence to a critical point can be shown without guarantees of optimality. In the smooth and convex case, almost sure convergence rates for the last-iterates of SGD without Tikhonov regularization under the ABC-condition for the noise have been derived in \cite{JMLR:v25:23-1436}. Therein, it was proved that for step-sizes $\alpha_k = C_\alpha k^{-\frac 23-\eps}$ with $\eps \in (0,\frac 13)$ one has $f(X_k)-f(x_\ast) \in \cO(k^{-\frac 13 +\eps})$ almost surely. \cite{moulines2011non} gives a rate of convergence for SGD with polynomially decaying step-sizes in expectation. Their rate is optimal for the choice $\alpha_k = C_\alpha k^{-\frac 23}$ which yield the rate $\E[f(X_k)-f(x_\ast)] \in \cO(k^{-1/3})$. Assuming uniform boundedness of the gradients and iterates, \cite{shamir2013stochastic} increased this rate of convergence to $\cO(k^{-1/2} \log (k))$ for the step-sizes $\alpha_k = C_\alpha k^{-\frac 12}$. These additional assumptions have been lifted in \cite{liu2024revisiting}, where only bounded variance of the noise is assumed.
Following \cite{moulines2011non, liu2024revisiting}, the Ruppert-Polyak average also achieves the rate $\E[f(\bar X_k)-f(x_\ast)] \in \cO(k^{-1/2} \log (k))$ for $\alpha_k= C_\alpha k^{-\frac 12}$. 
These results attain the lower bound for the optimization of $L$-smooth convex functions using first order algorithms that have access to unbiased gradient estimates with bounded variance derived in \cite{agarwal2009information} up to a $\log(k)$ factor. 
More recently, almost sure convergence rates under a related setting have been derived in \cite{pmlr-v134-sebbouh21a}.
However, without any additional regularization one can only guarantee convergence towards some global minimum. The present article targets specifically algorithms that find the minimum-norm solution. We show that, when using reg-SGD one gets comparable convergence rates in the optimality gap as the ones for vanilla SGD cited above. Moreover, one can weaken the bounded variance assumption on the noise, while also achieving strong convergence to the minimum-norm solution.
We also point to \cite{HE20181}, where the role of regularization for the convergence of SGD for a prescribed number of optimization steps is discussed. Using a fixed regularization parameter, the authors derive a complexity bound for averaged SGD.  
However, they do not discuss the role of Tikhonov regularization for convergence towards a minimum-norm solution. Finally, in the context of inverse problems the regularization properties of (vanilla) SGD have been analyzed for linear  \cite{Jahn_2020,Jin_2019,doi:10.1137/20M1374456} and non-linear forward models \cite{doi:10.1137/19M1271798} based on a-priori and a-posteriori stopping rules. Moreover, in \cite{NEURIPS2022_3e8b1835} SGD has been considered for inverse problems from a statistical point of view.

\textbf{Regularization effects in ML:} An exciting line of research that we do not touch directly is the explicit and implicit regularization effect of SGD appearing in ML  training. We refer the reader for instance to the recent articles \cite{pmlr-v130-raj21a, smith2021on, barrett2021implicit} and references therein. The relation to our work is that plain vanilla SGD tends to converge to minimum-norm solution in certain problems of practical relevance (in general it does not), while we prove that for convex problems Tikhonov regularized SGD with decaying regularization can always be made to converge to the minimum-norm solution.

\section{Theoretical results} \label{sec:main}
In this section, we present our main theoretical contributions concerning the convergence of stochastic gradient descent with decaying Tikhonov regularization. An abstract convergence result is presented, followed by quantitative rates of convergence for polynomial step-size and regularization schedules. For the ML practitioner, we derive optimal choices for the step-sizes and regularization parameters. All proofs are provided in \Cref{app:proofs}, where slightly more general statements are presented.

\textbf{Approach:} First, we carefully balance the step-size and the regularization parameters to ensure convergence of the energy function
$
E_k = f_{\lambda_{k+1}}(X_k) - f_{\lambda_{k+1}}(x_{\lambda_{k+1}}).
$
In \Cref{lem:attouch1app}, we then obtain the estimate
$
\|X_k - x_{\lambda_k}\|_{\mathcal{X}} \le E_k/\lambda_k,
$
which links the distance to the regularized minimizer with the energy function. Since \( \lambda_k \) also influences the decay of \( E_k \), we must jointly control both quantities to ensure that 
$
\lim_{k \to \infty} E_k/\lambda_k = 0.
$
Combined with the fact that 
$
\lim_{\lambda \to 0} \|x_{\lambda} - x_\ast\|_{\mathcal{X}} = 0,
$
this yields strong convergence  
$\lim_{k \to \infty} \|X_k - x_\ast\|_{\mathcal{X}} = 0$  (both in $L^2$ and almost surely). Moreover, if a convergence rate for \( \|x_{\lambda} - x_\ast\|_{\mathcal{X}} \) is known, the analysis allows us to also quantify a strong convergence rate for the iterates \( X_k \) to $x_\ast$.

\subsection{General convergence results} First, we present a general convergence statement for reg-SGD to the minimum-norm solution, both in the almost sure sense as well as the $L^2$-sense. The assumptions on the sequence of step-sizes $\alpha$ are similar to the Robbins-Monro step-size conditions. Regarding the sequence of regularization parameters $\lambda$, the assumptions for deriving almost sure convergence to the minimum-norm solution reflect the competing goals of using the strong convexity of $f_\lambda$ for $\lambda >0$ and having sufficiently fast convergence of $x_\lambda \to x_\ast$.
Compared to the almost sure convergence statement in \Cref{thm:convgenAS}, the second result, \Cref{thm:convgenL2}, establishes convergence in $L^2$ under arguably much weaker assumptions. In particular, the sequence $\lambda$ is allowed to decay at a very slow rate and no prior knowledge of the rate of convergence for $x_\lambda \to x_\ast$ is required. 

We stress that we do not impose any boundedness assumptions of the reg-SGD scheme. In particular, the fact that $\sup_{k \in \N_0} X_k<\infty$ almost surely is a consequence of \Cref{thm:convgenAS} which is guaranteed by the retracting force of the Tikhonov regularization.

\begin{theorem}[Almost sure convergence] \label{thm:convgenAS}
    Suppose that \Cref{ass:smoothconvex} and \Cref{assu:ABC} are fulfilled and let $(X_k)_{k \in \N_0}$ be generated by \eqref{eq:regSGD} with predictable (random) step-sizes and regularization parameters that are uniformly bounded from above. Moreover, we assume that almost surely the sequence $\lambda$ is decreasing to $0$ and that
    \begin{align} \label{eq:RM}
        \sum_{k \in \N} \alpha_k \lambda_k = \infty , \quad \sum_{k \in \N}\alpha_k^2 < \infty,  \quad \text{ and }  \quad \sum_{k \in \N} \alpha_k\lambda_k \big(\|x_\ast\|^2_\cX-\|x_{\lambda_k}\|_{\cX}^2\big)<\infty.
    \end{align}
    Then $\lim_{k\to\infty}X_k=x_\ast$ almost surely.
\end{theorem}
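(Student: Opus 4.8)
The plan is to run a supermartingale (Robbins–Siegmund) argument on a carefully chosen Lyapunov functional and then extract strong convergence by splitting $\|X_k - x_\ast\|_\cX \le \|X_k - x_{\lambda_k}\|_\cX + \|x_{\lambda_k}-x_\ast\|_\cX$, as announced in the ``Approach'' paragraph. The natural candidate for the Lyapunov functional is $V_k = \|X_k - x_{\lambda_{k+1}}\|_\cX^2$, i.e.\ the squared distance to the \emph{moving} regularized minimizer. The first step is to expand $V_k$ along the recursion \eqref{eq:regSGD}: conditioning on $\mathcal F_{k-1}$, the martingale-difference term $D_k$ drops out in expectation, its second moment is controlled by \Cref{assu:ABC}, the gradient term is handled by $L$-smoothness together with \eqref{eq:009}, and the regularized gradient $\nabla f_{\lambda_k}(X_{k-1}) = \nabla f(X_{k-1}) + \lambda_k X_{k-1}$ is strongly monotone with modulus $\lambda_k$ around $x_{\lambda_k}$, giving a negative drift term of the order $\alpha_k\lambda_k \|X_{k-1}-x_{\lambda_k}\|_\cX^2$. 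The cross terms between the step-size update and the drift of the minimizer, $x_{\lambda_{k+1}} - x_{\lambda_k}$, must be absorbed; here one uses the Lipschitz-type bound on the solution curve (from \cite{attouch1996viscosity}), namely $\|x_{\lambda}-x_{\lambda'}\|_\cX$ is controlled by $|\lambda - \lambda'|$ times a factor involving $\|x_\ast\|_\cX$, which is exactly why the third summability condition in \eqref{eq:RM} appears.

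After this expansion I expect an inequality of the Robbins–Siegmund form
\[
\E[V_k \mid \mathcal F_{k-1}] \le (1 + a_k) V_{k-1} - b_k + c_k,
\]
where $b_k \gtrsim \alpha_k\lambda_k (f_{\lambda_k}(X_{k-1}) - f_{\lambda_k}(x_{\lambda_k}))$ (or the distance-squared version) and $\sum_k a_k < \infty$, $\sum_k c_k < \infty$ follow from $\sum \alpha_k^2 < \infty$, the uniform upper bound on $\alpha,\lambda$, boundedness of $\|x_{\lambda_k}\|_\cX \le \|x_\ast\|_\cX$, and the third condition in \eqref{eq:RM}. The Robbins–Siegmund theorem then yields simultaneously that $V_k$ converges almost surely to a finite limit — which already gives $\sup_k \|X_k\|_\cX < \infty$ a.s., the claimed stability — and that $\sum_k b_k < \infty$ a.s. Combining $\sum_k \alpha_k\lambda_k = \infty$ with $\sum_k b_k < \infty$ forces $\liminf_k \big(f_{\lambda_k}(X_{k-1}) - f_{\lambda_k}(x_{\lambda_k})\big) = 0$ along a subsequence, and then \Cref{lem:attouch1app} (the bound $\|X_k - x_{\lambda_k}\|_\cX \le E_k/\lambda_k$, suitably applied) upgrades this to $\liminf_k \|X_k - x_{\lambda_k}\|_\cX = 0$.

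To pass from a subsequential statement to full convergence I would argue that the a.s.\ limit of $V_k = \|X_k - x_{\lambda_{k+1}}\|_\cX^2$ must be $0$: since $\|x_{\lambda_k}-x_\ast\|_\cX \to 0$ and $V_k$ converges, $\|X_k - x_\ast\|_\cX^2$ also converges a.s., and a subsequence on which $\|X_k - x_{\lambda_k}\|_\cX \to 0$ then pins the limit to $\|x_\ast - x_\ast\|_\cX = 0$ — more precisely, one shows the limit of $\|X_k-x_\ast\|_\cX$ coincides with $\liminf_k \|X_k - x_{\lambda_k}\|_\cX = 0$. Hence $X_k \to x_\ast$ a.s.

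The main obstacle is the bookkeeping around the \emph{moving target} $x_{\lambda_{k+1}}$: one must control $\|x_{\lambda_{k+1}} - x_{\lambda_k}\|_\cX$ quantitatively and show the resulting error terms are summable, which is delicate because the step-size and regularization schedules interact and $\lambda_k \downarrow 0$ makes the strong-convexity drift $\alpha_k\lambda_k$ vanish. Getting the cross-term estimate tight enough that the third condition in \eqref{eq:RM} (and not something stronger) suffices — and simultaneously ensuring the negative drift $b_k$ is not swamped — is the crux; the remaining manipulations (expanding the square, invoking \Cref{assu:ABC} and \eqref{eq:009}, applying Robbins–Siegmund) are routine.
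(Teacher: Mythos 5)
There is a genuine gap, and it sits exactly where you yourself locate the crux: the bookkeeping for the moving target $x_{\lambda_{k+1}}$. Expanding $V_k=\|X_k-x_{\lambda_{k+1}}\|_\cX^2$ produces a cross term with $x_{\lambda_k}-x_{\lambda_{k+1}}$, and the only available bound on the solution curve (from \cite{attouch1996viscosity}) is of the form $\|x_{\lambda_k}-x_{\lambda_{k+1}}\|_\cX\lesssim \frac{\lambda_k-\lambda_{k+1}}{\lambda_{k+1}}\,\|x_\ast\|_\cX$ --- it is \emph{not} Lipschitz in $\lambda$ uniformly as $\lambda\downarrow 0$, and it is not controlled by the third condition in \eqref{eq:RM}, which concerns $\|x_\ast\|_\cX^2-\|x_{\lambda_k}\|_\cX^2$ and enters the argument elsewhere. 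After absorbing the cross term into the negative drift via Young's inequality with weight $\alpha_k\lambda_k$, you are left with an error of order $\frac{(\lambda_k-\lambda_{k+1})^2}{\alpha_k\lambda_k\lambda_{k+1}^2}$, which need not be summable under \eqref{eq:RM}: for instance $\lambda_k=1/\log k$, $\alpha_k=1/k$ satisfies all three conditions in \eqref{eq:RM}, yet this error term behaves like $\frac{1}{k\log k}$ and diverges. So the Robbins--Siegmund inequality you announce cannot be established for general admissible schedules with this Lyapunov functional (it does work for polynomial schedules, which is why the approach feels plausible).

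The paper avoids the moving target in the distance term altogether. It takes the combined Lyapunov function $\varphi_k+E_k$ with $\varphi_k=\|X_k-x_\ast\|_\cX^2$ (distance to the \emph{fixed} minimum-norm solution) and $E_k=f_{\lambda_{k+1}}(X_k)-f_{\lambda_{k+1}}(x_{\lambda_{k+1}})$. The drift for $\varphi_k$ comes from the $\lambda_k$-strong convexity of $f_{\lambda_k}$ evaluated at $x_\ast$ rather than at $x_{\lambda_k}$, which yields
\begin{equation*}
\langle \nabla f_{\lambda_k}(X_{k-1}),X_{k-1}-x_\ast\rangle_\cX\ \ge\ \tfrac{\lambda_k}{2}\|X_{k-1}-x_\ast\|_\cX^2-\tfrac{\lambda_k}{2}\bigl(\|x_\ast\|_\cX^2-\|x_{\lambda_k}\|_\cX^2\bigr),
\end{equation*}
and this is precisely where the third condition in \eqref{eq:RM} is consumed. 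The only price paid for the moving minimizer appears inside $E_k$ as $f_{\lambda_k}(x_{\lambda_k})-f_{\lambda_{k+1}}(x_{\lambda_{k+1}})\le\tfrac{\lambda_k-\lambda_{k+1}}{2}\|x_\ast\|_\cX^2$ (\Cref{lem:regfun_decrease}), which telescopes to $\tfrac{\lambda_1}{2}\|x_\ast\|_\cX^2$ and is therefore always summable --- no condition on $\frac{\lambda_k-\lambda_{k+1}}{\lambda_{k+1}}$ is needed. \Cref{cor:RS2} then gives $\varphi_k\to 0$ directly (no liminf-plus-convergence endgame, and no detour through $\|x_{\lambda_k}-x_\ast\|_\cX\to 0$). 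If you want to rescue your route, you would have to either restrict to schedules for which $\sum_k(\lambda_k-\lambda_{k+1})^2/(\alpha_k\lambda_k\lambda_{k+1}^2)<\infty$, or switch the distance term to the fixed target as the paper does.
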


One can question how to verify the third assumption in \eqref{eq:RM} for practical applications. In \Cref{app:minnorm}, we quantify the distance between $x_\lambda$ and $x_\ast$ in linear inverse problems satisfying a source condition, as well as in the situation, where $f$ satisfies a \L ojasiewicz inequality. 
In general, one has no control for $\|x_\ast\|_\cX^2-\|x_{\lambda}\|_\cX^2$, see \cite{torralba1996convergence}.
We thus present a second result on $L^2$-convergence that holds also under a simpler condition. Here we require deterministic step-sizes and regularization parameters. Our requirements in \eqref{eq:2937293647822} are very similar to the ones needed in the deterministic setting \cite[Theorem 5.1 and Theorem 5.2]{bakushinsky1994illposed} and are motivated by the corresponding deterministic result in continuous time \cite[Theorem 2.2]{cominetti2008strong}.
\begin{theorem}[$L^2$-convergence] \label{thm:convgenL2}
    Suppose that \Cref{ass:smoothconvex} and \Cref{assu:ABC} are fulfilled and let $(X_k)_{k \in \N_0}$ be generated by \eqref{eq:regSGD} with deterministic step-sizes and deterministic and decreasing regularization parameters $(\lambda_k)_{k \in \N}$. Moreover, assume that $\lambda_k \to 0$ and \eqref{eq:RM}, or, alternatively, that
    \begin{align} \label{eq:2937293647822}
        \sum_{k \in \N} \alpha_k \lambda_k = \infty, \quad \alpha_k = o(\lambda_k), \quad \text{ and } \quad \lambda_k-\lambda_{k-1}=o(\alpha_k \lambda_k).
    \end{align}
    Then $\lim_{k\to\infty} \E[\|X_k-x_\ast\|_\cX^2]=0$.
   \end{theorem}
In the next section, the theorems are made more explicit by choosing polynomial step-size and regularization schedules that allow us to derive convergence rates.

\subsection{Convergence rates}
%
We now go a step further and derive $L^2$- and almost sure-convergence rates for the particular choices of polynomial schedules
\begin{align*}
 \alpha_k=C_\alpha k^{-q}\quad \text{and}\quad    \lambda_k=C_\lambda k^{-p},\quad p,q\in (0,1).
\end{align*}
Note that, due to \Cref{thm:convgenL2}, one has $\E[\|X_k-x_\ast\|_\cX^2] \to 0$ if $q>p$ and $p+q<1$. However, we can further derive the following convergence rates.

\begin{theorem} [$L^2$-rates for reg-SGD with polynomial schedules]\label{thm:L2}
    Suppose that \Cref{ass:smoothconvex} and \Cref{assu:ABC} are satisfied. Let $C_\alpha, C_\lambda>0$, $p \in (0,\frac 12]$ and $q \in (p,1-p]$. Let $(X_k)_{k\in\N_0}$ be generated by \eqref{eq:regSGD} with $\alpha_k=C_\alpha k^{-q}$ and $\lambda_k=C_\lambda k^{-p}$. If $q=1-p$ we additionally assume that $2C_\lambda C_\alpha > 1-q$.
    Then it holds that $\lim_{k\to\infty}\E[\|X_k - x_\ast\|_{\mathcal X}^2] = 0$ and
    \begin{itemize}
        \item[(i)] $\E[f(X_k)-f(x_\ast)]\in\cO(k^{-\min(p,q-p)})$,
        \item[(ii)] $\E[\|X_k-x_{\lambda_{k+1}}\|_{\mathcal X}^2]\in \cO(k^{-\min(1-q-p,q-2p)})$ for $p \in(0,\frac 13)$ and $q \in (2p,1-p)$.
    \end{itemize}
    \end{theorem}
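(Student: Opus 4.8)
The plan is to control the energy $E_k := f_{\lambda_{k+1}}(X_k)-f_{\lambda_{k+1}}(x_{\lambda_{k+1}})\ge 0$ over one step of \eqref{eq:regSGD}, reduce this to a scalar recursion for $e_k:=\E[E_k]$ (which is finite by a routine induction, since $f_\lambda$ has at most quadratic growth and $\E[\|X_k\|_\cX^2]<\infty$), solve the recursion for the polynomial schedules, and then read off (i) and (ii) from $E_k$ via strong convexity. Writing the update as $X_{k+1}=X_k-\alpha_{k+1}\big(\nabla f_{\lambda_{k+1}}(X_k)+D_{k+1}\big)$ and using that $f_{\lambda_{k+1}}$ is $(L+\lambda_{k+1})$-smooth and $\lambda_{k+1}$-strongly convex, I would apply the descent lemma, take $\E[\,\cdot\mid\cF_k]$, and use $\E[D_{k+1}\mid\cF_k]=0$ to get
\[
\E\big[f_{\lambda_{k+1}}(X_{k+1})-f_{\lambda_{k+1}}(x_{\lambda_{k+1}})\,\big|\,\cF_k\big]\le E_k-\alpha_{k+1}\Big(1-\tfrac{(L+\lambda_{k+1})\alpha_{k+1}}{2}\Big)\big\|\nabla f_{\lambda_{k+1}}(X_k)\big\|_\cX^2+\tfrac{(L+\lambda_{k+1})\alpha_{k+1}^2}{2}\,\E\big[\|D_{k+1}\|_\cX^2\,\big|\,\cF_k\big].
\]
For $k\ge k_0$ the prefactor of the gradient term exceeds $\tfrac12$, and the Polyak--{\L}ojasiewicz inequality $\|\nabla f_{\lambda_{k+1}}(x)\|_\cX^2\ge 2\lambda_{k+1}\big(f_{\lambda_{k+1}}(x)-f_{\lambda_{k+1}}(x_{\lambda_{k+1}})\big)$, a consequence of $\lambda_{k+1}$-strong convexity, turns it into a contraction term $-2\theta_{k+1}\alpha_{k+1}\lambda_{k+1}E_k$ with $\theta_{k+1}\to1$.

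Next I would handle the noise and the moving regularizer. By \Cref{assu:ABC} together with $f(X_k)-f(x_\ast)\le f_{\lambda_{k+1}}(X_k)-f(x_\ast)=E_k+\big(f_{\lambda_{k+1}}(x_{\lambda_{k+1}})-f(x_\ast)\big)\le E_k+\tfrac{\lambda_{k+1}}{2}\|x_\ast\|_\cX^2$, the conditional second moment $\E[\|D_{k+1}\|_\cX^2\mid\cF_k]$ is at most $A\,E_k+\cO(\lambda_{k+1})+C$. Its $E_k$-proportional part contributes $\cO(\alpha_{k+1}^2)\,E_k$, which is $o(\alpha_{k+1}\lambda_{k+1})\,E_k$ exactly because $q>p$ forces $\alpha_{k+1}=o(\lambda_{k+1})$, so it gets absorbed into the contraction factor, now $1-b_{k+1}\alpha_{k+1}\lambda_{k+1}$ with $b_{k+1}\to 2$. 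To replace $f_{\lambda_{k+1}}$ by $f_{\lambda_{k+2}}$ in the energy I would use the Tikhonov path: $\lambda_{k+2}\le\lambda_{k+1}$ gives $f_{\lambda_{k+2}}(X_{k+1})\le f_{\lambda_{k+1}}(X_{k+1})$, while concavity of $\lambda\mapsto\min_x f_\lambda(x)$ (an infimum of affine functions of $\lambda$, with derivative $\tfrac12\|x_\lambda\|_\cX^2$) combined with $\|x_\lambda\|_\cX\le\|x_\ast\|_\cX$ yields $f_{\lambda_{k+1}}(x_{\lambda_{k+1}})-f_{\lambda_{k+2}}(x_{\lambda_{k+2}})\le\tfrac12\|x_\ast\|_\cX^2(\lambda_{k+1}-\lambda_{k+2})$. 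Taking full expectations, for $k\ge k_0$ this produces a recursion
\[
e_{k+1}\le(1-b_{k+1}\alpha_{k+1}\lambda_{k+1})\,e_k+c_1\,\alpha_{k+1}^2+c_2\,(\lambda_{k+1}-\lambda_{k+2}),\qquad b_{k+1}\to 2,
\]
for constants $c_1,c_2\ge0$.

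For the polynomial choices, $\alpha_{k+1}\lambda_{k+1}\asymp k^{-(p+q)}$, $\alpha_{k+1}^2\asymp k^{-2q}$ and $\lambda_{k+1}-\lambda_{k+2}\asymp k^{-(p+1)}$, so the additive term is $\cO(k^{-\min(2q,\,p+1)})$. A standard recursion lemma (in the spirit of Chung's lemma) then gives: if $p+q<1$, $e_k=\cO\big(k^{(p+q)-\min(2q,p+1)}\big)=\cO\big(k^{-\min(q-p,\,1-q)}\big)$ with no constraint on the constants; if $p+q=1$, i.e.\ $q=1-p$, the contraction rate $\asymp b_{k+1}C_\alpha C_\lambda/k$ must exceed $\min(2q,p+1)-1=\min(q-p,1-q)$, which follows from $b_{k+1}\to2$ and $2C_\alpha C_\lambda>1-q\ge\min(q-p,1-q)$, so again $e_k=\cO\big(k^{-\min(q-p,1-q)}\big)$. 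Now (i) follows from $f(X_k)-f(x_\ast)\le E_k+\tfrac{\lambda_{k+1}}{2}\|x_\ast\|_\cX^2$, giving $\E[f(X_k)-f(x_\ast)]=\cO(k^{-\min(q-p,1-q)})+\cO(k^{-p})=\cO(k^{-\min(q-p,\,p)})$, since $q\le1-p$ forces $1-q\ge p$; and (ii) follows from $\lambda_{k+1}$-strong convexity, $\|X_k-x_{\lambda_{k+1}}\|_\cX^2\le\tfrac{2}{\lambda_{k+1}}E_k$, giving $\E[\|X_k-x_{\lambda_{k+1}}\|_\cX^2]=\cO\big(k^{\,p-\min(q-p,1-q)}\big)=\cO\big(k^{-\min(q-2p,\,1-p-q)}\big)$, where $p\in(0,\tfrac13)$ and $q\in(2p,1-p)$ make both exponents positive. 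Finally, $\lim_k\E[\|X_k-x_\ast\|_\cX^2]=0$ is not implied by these rates for all admissible $(p,q)$, but follows from \Cref{thm:convgenL2}: polynomial schedules with $q\in(p,1-p]$ (hence $p<\tfrac12$) satisfy \eqref{eq:2937293647822}, namely $\sum_k\alpha_k\lambda_k=\infty$, $\alpha_k=o(\lambda_k)$, and $\lambda_k-\lambda_{k-1}\asymp k^{-p-1}=o(k^{-p-q})=o(\alpha_k\lambda_k)$.

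I expect the main obstacle to be the one-step estimate in the second step: one has to simultaneously absorb the $E_k$-proportional part of the ABC noise into the contraction (which is what pins down the constraint $q>p$) and control the drift caused by replacing $\lambda_{k+1}$ with $\lambda_{k+2}$ in the energy, where the concavity and monotonicity of the solution curve $(x_\lambda)_{\lambda\ge0}$ are the crucial structural inputs. A secondary delicate point is the critical regime $q=1-p$, where the recursion contracts only at the borderline rate $\asymp1/k$ and the quantitative hypothesis $2C_\alpha C_\lambda>1-q$ is precisely what makes the recursion lemma deliver the stated polynomial rate rather than a slower one.
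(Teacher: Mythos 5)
Your proposal is correct and follows essentially the same route as the paper: a one-step descent estimate for the energy $E_k=f_{\lambda_{k+1}}(X_k)-f_{\lambda_{k+1}}(x_{\lambda_{k+1}})$ combining the $(L+\lambda)$-smooth descent lemma, the Polyak--\L ojasiewicz inequality from $\lambda$-strong convexity, the ABC condition absorbed via $q>p$, and the bound $f_{\lambda}(x_\lambda)-f_{\lambda'}(x_{\lambda'})\le\frac{\lambda-\lambda'}{2}\|x_\ast\|_\cX^2$ to handle the moving regularizer (the paper's \Cref{prop:descent} and \Cref{lem:regfun_decrease}), followed by solving the resulting scalar recursion at rate $k^{-\min(1-q,q-p)}$ (the paper does this by tracking $\varphi_k=\E[E_k]k^\beta$ rather than invoking Chung's lemma, but it is the same computation, including the role of $2C_\alpha C_\lambda>1-q$ in the critical case $q=1-p$) and reading off (i) and (ii) from \Cref{lem:attouch1}, with $L^2$-convergence delegated to \Cref{thm:convgenL2}. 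No substantive differences.
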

For a sequence of step-sizes $\alpha_k=C_\alpha k^{-q}$ with $q \in (0,\frac 23]$ one can set $\lambda_k=C_\lambda k^{-q/2}$ in order to get
\[
\E[f(X_k)-f(x_\ast)] \in \cO(k^{-\frac q2}).
\]
For $q \in (\frac 23,1)$ one can set $\lambda_k=C_\lambda k^{-1+q}$ in order to obtain
\[
\E[f(X_k)-f(x_\ast)] \in \cO(k^{-1+q}).
\]
Therefore, we exactly recover the rates of convergence to some minimum for SGD without regularization derived in \cite{moulines2011non}. Recently, \cite{liu2024revisiting} improved the convergence rate for $q=\sfrac 12$ to $
\E[f(X_k)-f(x_\ast)] \in \cO(k^{-1/2}\log(k))$. It is an interesting open question, whether the convergence rate of reg-SGD can be improved in this situation.

    Finally, we derive almost sure convergence rates for regularized SGD. We highlight that \Cref{thm:almostsure} additionally gives almost sure convergence of reg-SGD to the minimum-norm solution for a specific choice of schedules without additional assumptions on the rate of convergence for $\|x_\ast\|-\|x_\lambda\|$.

    \begin{theorem}[Almost sure-rates for reg-SGD with polynomial schedules] \label{thm:almostsure}
Suppose that \Cref{ass:smoothconvex} and \Cref{assu:ABC} are satisfied. Let $C_\alpha, C_\lambda>0$, $p \in (0,\frac 13)$ and $q \in (\frac{p+1}{2}, 1-p)$. Let $(X_k)_{k\in\N_0}$ be generated by \eqref{eq:regSGD} with $\alpha_k=C_\alpha k^{-q}$ and $\lambda_k=C_\lambda k^{-p}$. 
    Then, it holds that $\lim_{k\to\infty}\|X_k - x_\ast\|_{\mathcal X} = 0 $ almost surely and for any $\beta\in(0,2q-1)$
    \begin{itemize}
        \item[(i)] $f(X_k)-f(x_\ast) \in \cO(k^{-\min(\beta, p)})$ almost surely,
        \item[(ii)] $\|X_k-x_{\lambda_{k+1}}\|_{\mathcal X}\in \cO(k^{-\min(\beta-p,1-q-p)})$ almost surely.
    \end{itemize}
\end{theorem}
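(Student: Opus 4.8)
The plan is to derive the almost sure rates from the $L^2$-machinery built for Theorem~\ref{thm:L2} combined with a supermartingale/Robbins--Siegmund argument applied to a suitably chosen energy. As in the sketch preceding Theorem~\ref{thm:convgenAS}, the key object is $E_k = f_{\lambda_{k+1}}(X_k) - f_{\lambda_{k+1}}(x_{\lambda_{k+1}})$, and the relation $\|X_k - x_{\lambda_k}\|_{\mathcal X} \le \sqrt{2E_k/\lambda_k}$ (from $\lambda_k$-strong convexity of $f_{\lambda_k}$, i.e.\ the Hilbert-space version of \Cref{lem:attouch1app}). First I would write the one-step expansion of $\|X_k - x_{\lambda_{k+1}}\|_{\mathcal X}^2$ (or directly of $E_k$), using $L$-smoothness, convexity of $f$, strong convexity of $f_{\lambda_{k+1}}$, the ABC-bound on $\E[\|D_k\|^2\mid\mathcal F_{k-1}]$, and the ``drift'' of the reference point $x_{\lambda_k}\to x_{\lambda_{k+1}}$ controlled via the $\lambda$-Lipschitz estimate $\|x_{\lambda}-x_{\lambda'}\|_{\mathcal X}\le \tfrac{|\lambda-\lambda'|}{\lambda\wedge\lambda'}\|x_\ast\|_{\mathcal X}$ (cf.\ \cite{attouch1996viscosity}). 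This produces a recursion of the Robbins--Siegmund type
\[
\E[V_{k}\mid\mathcal F_{k-1}] \le (1 - 2\alpha_k\lambda_k + c\alpha_k^2)V_{k-1} + \alpha_k a_k(f(X_{k-1})-f(x_\ast)) + \alpha_k b_k,
\]
with explicitly polynomial coefficients $a_k, b_k$ coming from $\lambda_k-\lambda_{k-1}=\Theta(k^{-p-1})$ and the constant $C$ in \Cref{assu:ABC}.

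Next I would convert this into almost sure rates. The standard route: for a target rate $\gamma_k$ (polynomial), consider $W_k = V_k/\gamma_k^2$ or more flexibly $W_k=\gamma_k V_k$ and show that $(W_k)$ is, up to a summable-increment correction, a nonnegative supermartingale, hence converges almost surely; the summability of the correction terms is exactly where the constraints $q\in(\tfrac{p+1}{2},1-p)$ and $\beta\in(0,2q-1)$ enter (they guarantee $\sum \alpha_k^2\gamma_k^2<\infty$ and $\sum \alpha_k b_k\gamma_k^2<\infty$ after one uses the contraction factor $1-2\alpha_k\lambda_k$ with $\alpha_k\lambda_k=\Theta(k^{-q-p})$ and $q+p<1$). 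For the optimality-gap bound (i), I would instead run the recursion on $E_k$ directly, use $f(X_k)-f(x_\ast)\le f_{\lambda_{k+1}}(X_k)-f_{\lambda_{k+1}}(x_{\lambda_{k+1}}) + \tfrac{\lambda_{k+1}}{2}(\|x_\ast\|^2-\|x_{\lambda_{k+1}}\|^2)\le E_k + \tfrac{\lambda_{k+1}}{2}\|x_\ast\|_{\mathcal X}^2$, so that an $\cO(k^{-\beta})$ bound on $E_k$ plus the $\Theta(k^{-p})$ size of the last term gives $f(X_k)-f(x_\ast)\in\cO(k^{-\min(\beta,p)})$. Part (ii) then follows from $\|X_k-x_{\lambda_{k+1}}\|_{\mathcal X}\le\sqrt{2E_k/\lambda_{k+1}}\in\cO(k^{(p-\beta')/2})$ for the appropriate $\beta'$, combined with the triangle inequality and the $\Theta(k^{-p})\cdot\|x_\ast\|$ drift between $x_{\lambda_{k+1}}$ and $x_{\lambda_k}$; matching exponents yields $-\min(\beta-p,1-q-p)$. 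Strong convergence $X_k\to x_\ast$ a.s.\ follows by checking that the chosen schedules satisfy the hypotheses of \Cref{thm:convgenAS} — in particular the third condition in \eqref{eq:RM} holds for free here because $p<\tfrac13$ forces $\sum\alpha_k\lambda_k\cdot\lambda_k=\Theta(\sum k^{-q-2p})<\infty$ is \emph{not} what we want; rather one uses $\|x_\ast\|^2-\|x_{\lambda_k}\|^2\to 0$ together with Abel/Kronecker summation, or simply invokes the $L^2$-result \Cref{thm:convgenL2} plus the a.s.\ rate just proved to upgrade to a.s.\ convergence.

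The main obstacle I anticipate is the \emph{joint} bookkeeping of the two coupled quantities $E_k$ and $\|X_k-x_{\lambda_{k+1}}\|^2$ under a single supermartingale inequality while keeping every error term summable after multiplication by the (growing) rate weight $\gamma_k^2$: the drift term from moving the anchor point $x_{\lambda_k}\rightsquigarrow x_{\lambda_{k+1}}$ scales like $\lambda_k^{-1}(\lambda_{k-1}-\lambda_k)=\Theta(k^{-1})$ relative to $E_k$, and absorbing it against the contraction $2\alpha_k\lambda_k=\Theta(k^{-q-p})$ is only possible because $q+p<1$; the borderline cases (and the role of the extra hypothesis $2C_\lambda C_\alpha>1-q$ in the sibling theorem) show this balance is tight. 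A secondary technical point is ensuring the supermartingale is genuinely nonnegative and integrable despite the absence of a priori bounds on $X_k$ — this is handled exactly as in the proof of \Cref{thm:convgenAS}, where the Tikhonov term provides the a.s.\ boundedness $\sup_k\|X_k\|_{\mathcal X}<\infty$, so I would establish that boundedness first and then work on the event of finite supremum.
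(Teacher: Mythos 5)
Your skeleton matches the paper's: run the descent recursion of \Cref{prop:descent} for the energy $E_k=f_{\lambda_{k+1}}(X_k)-f_{\lambda_{k+1}}(x_{\lambda_{k+1}})$, weight by $k^\beta$, apply Robbins--Siegmund, and then transfer the $E_k$ rate to (i) and (ii) via \Cref{lem:attouch1}. But there is a genuine gap at the step you yourself flag as the main obstacle. The anchor-drift term in the recursion is $\tfrac{\lambda_k-\lambda_{k+1}}{2}\|x_\ast\|_{\mathcal X}^2=\Theta(k^{-p-1})$, so after multiplying by the rate weight $k^\beta$ it is $\Theta(k^{\beta-p-1})$, which is summable only for $\beta<p$. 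The theorem, however, must cover all $\beta\in(0,2q-1)$, and since $q>\tfrac{p+1}{2}$ one has $2q-1>p$; the regime $\beta\in(p,2q-1)$ is exactly the one that makes part (ii) nontrivial. Your proposed fix --- ``absorbing it against the contraction $2\alpha_k\lambda_k=\Theta(k^{-q-p})$'' --- is valid for $\limsup$-in-expectation arguments (\Cref{lem:auxconv}-type inductions, as in \Cref{thm:detapp} and \Cref{thm:L2}), but the almost-sure Robbins--Siegmund theorem (\Cref{cor:RSeasy}, \Cref{cor:RS2}) requires genuine summability of the additive perturbations; a non-summable deterministic forcing cannot be traded against the contraction there. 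The paper's resolution is a variation-of-constants decomposition: it defines the deterministic sequence $\Psi_k=\sum_{i=1}^{k}\tfrac{\lambda_i-\lambda_{i+1}}{2}\|x_\ast\|_{\mathcal X}^2\prod_{j=i+1}^{k}(\cdots)$ solving the drift-driven recursion, shows deterministically that $\Psi_k\in\cO(k^{-(1-q)})$ (hence bounded, so $\tilde E_k=E_k-\Psi_k$ is bounded below), and applies Robbins--Siegmund only to $\tilde E_k k^\beta$, whose additive term $\Theta(k^{\beta-2q})$ is summable precisely when $\beta<2q-1$. This yields $E_k\in\cO(k^{-\min(\beta,1-q)})$ almost surely, which is where the two branches of the minima in (i) and (ii) come from. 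Without this separation your recursion does not close for $\beta>p$.

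Two secondary points. First, the detour through ``establish $\sup_k\|X_k\|_{\mathcal X}<\infty$ first and work on that event'' is unnecessary here: $E_k\ge 0$ by definition and $\tilde E_k\ge -\sup_k\Psi_k$, so \Cref{cor:RSeasy} applies directly; the indicator/stopping machinery is only needed in \Cref{thm:convgenAS}. Second, the almost-sure convergence $X_k\to x_\ast$ does not require verifying the third condition of \eqref{eq:RM} at all (your discussion of it is inconclusive): it follows from part (ii), which gives $\|X_k-x_{\lambda_{k+1}}\|_{\mathcal X}\to 0$ almost surely because $\min(\beta-p,1-q-p)>0$ is attainable, combined with the deterministic fact $\|x_{\lambda}-x_\ast\|_{\mathcal X}\to 0$ as $\lambda\to 0$ and the triangle inequality.
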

For a sequence of step-sizes $\alpha_k=C_\alpha k^{-q}$ with $q \in (\frac 23,1)$ one can set $\lambda_k=C_\lambda k^{-1+q+\eps}$ with $0<\eps<1-q$ to get almost surely
\[
f(X_k)-f(x_\ast) \in \cO(k^{-1+q+\eps}),
\]
which is the vanilla SGD rate of convergence to some minimum that has been recently derived in \cite{JMLR:v25:23-1436}.
\begin{remark}
    In \Cref{thm:detapp} of the appendix we also provide a theorem on convergence rates for deterministic reg-GD \eqref{eq:regGD} with polynomial step-size and regularization schedules.
\end{remark}
\textbf{Summary:} Incorporating carefully chosen vanishing Tikhonov regularization helps mitigate an exploding optimization sequence (the process $(X_k)_{k\in\N_0}$ is always bounded without further assumptions), ensures convergence to the minimum-norm solution, and achieves convergence rates in the optimality gap comparable to those of plain vanilla SGD.

\subsection{Refinements under \L ojasiewicz condition} \label{sec:refinement}
In this final result section, we refine the above results under stronger assumptions on $f$. We use ideas that were recently used for continuous-time optimization schemes, see \cite{maulen2024tikhonov}. Let us assume $f$ satisfies the \L ojasiewicz condition 
        \begin{align} \label{eq:loja}
            (f(x)-f(x_\ast))^\tau \le C \|\nabla f(x)\|_\cX \quad \text{ for all } x \in f^{-1}([f(x_\ast), f(x_\ast)+r]).
        \end{align}
       for some $ C, r>0$ and $\tau \in [0,1)$. It then follows (and this is what we actually need) that there exist $C_{\text{reg}}>0$ such that 
\begin{align} \label{eq:viscosityrate}
    \|x_\lambda-x_\ast\|_\cX \le C_{\text{reg}} \lambda^{\xi}, \quad \lambda \in (0,1],
\end{align}
with $\xi= \frac{1-\tau}{2}$, see \cite{maulen2024tikhonov}. We provide further discussion in \Cref{app:minnorm}. Note that \eqref{eq:loja} is sufficient to guarantee \eqref{eq:viscosityrate}, however, in the subsequent convergence rates we rely only on \eqref{eq:viscosityrate}. Now we use that
    \begin{align*}
        \|X_k-x_\ast\|_{\cX}^2 & \le 2\|X_k-x_{\lambda_{k+1}}\|_{\cX}^2 +2\|x_{\lambda_{k+1}}-x_\ast\|_{\cX}^2 
    \end{align*}
    so that we can bound the distance to the minimum-norm solution by \eqref{eq:viscosityrate} and the statements derived in \Cref{thm:L2} and \Cref{thm:almostsure}.
    
    Regarding the convergence in $L^2$, \Cref{thm:L2} together with  \eqref{eq:viscosityrate} implies the following strong convergence rates in $L^2$:
    \begin{corollary}[Strong $L^2$ convergence rates] \label{thm:strongconv_L2}
	Suppose that the conditions of \Cref{thm:L2} are satisfied and assume that \eqref{eq:viscosityrate} is in place for some $\xi>0$. Then it holds that
\begin{align*}
        \E[\|X_k-x_\ast\|_{\cX}^2] = \cO( k^{- \min(1-q-p, q-2p , 2\xi p)}).
    \end{align*}
\end{corollary}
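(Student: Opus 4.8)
The plan is to combine the triangle-inequality decomposition already displayed just before the corollary, namely
\[
\E[\|X_k-x_\ast\|_{\cX}^2] \le 2\,\E[\|X_k-x_{\lambda_{k+1}}\|_{\cX}^2] + 2\,\|x_{\lambda_{k+1}}-x_\ast\|_{\cX}^2,
\]
with the three rates that are now all available. First I would observe that the hypotheses of \Cref{thm:L2}, in particular $p\in(0,\frac12]$ and $q\in(p,1-p]$, do not by themselves put us in the sub-regime $p\in(0,\frac13)$, $q\in(2p,1-p)$ required for part (ii) of \Cref{thm:L2}; so the first thing to address is whether the claimed exponent $\min(1-q-p,\,q-2p,\,2\xi p)$ is vacuous (i.e. non-positive) outside that sub-regime, or whether the statement is implicitly restricted to it. I expect the intended reading is that the bound is only asserted to be informative when all three exponents are positive, i.e. precisely when $p<\frac13$, $2p<q<1-p$; otherwise $\min(1-q-p,q-2p,2\xi p)\le 0$ and the statement $\cO(k^{-(\cdot)})$ is trivially true. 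I would state this explicitly at the start of the proof.

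Granting that, the proof is short. For the first term, apply \Cref{thm:L2}(ii): under $p\in(0,\frac13)$ and $q\in(2p,1-p)$ we have $\E[\|X_k-x_{\lambda_{k+1}}\|_{\cX}^2]\in\cO(k^{-\min(1-q-p,\,q-2p)})$, which is $\cO(k^{-\min(1-q-p,\,q-2p,\,2\xi p)})$ a fortiori. For the second term, insert the viscosity rate \eqref{eq:viscosityrate}: since $\lambda_{k+1}=C_\lambda(k+1)^{-p}\le 1$ for $k$ large (and is bounded by a constant times $k^{-p}$ throughout), $\|x_{\lambda_{k+1}}-x_\ast\|_{\cX}\le C_{\mathrm{reg}}\lambda_{k+1}^{\xi}\le C_{\mathrm{reg}}C_\lambda^{\xi}(k+1)^{-\xi p}$, hence $\|x_{\lambda_{k+1}}-x_\ast\|_{\cX}^2\in\cO(k^{-2\xi p})\subseteq\cO(k^{-\min(1-q-p,\,q-2p,\,2\xi p)})$. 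Summing the two $\cO$-bounds and absorbing the factor $2$ gives the claim. I would also note that $\lim_{k\to\infty}\E[\|X_k-x_\ast\|_{\cX}^2]=0$ is already part of the conclusion of \Cref{thm:L2}, so the only new content is the rate.

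The one genuine subtlety — and the step I would be most careful about — is the compatibility of the parameter ranges: \eqref{eq:viscosityrate} holds for $\lambda\in(0,1]$, so one must first pass to $k$ large enough that $\lambda_{k+1}\le 1$ (harmless for an asymptotic $\cO$-statement), and one must make sure that the regime in which part (ii) of \Cref{thm:L2} is quoted is exactly the regime in which the corollary is non-trivial. A clean way to present this is: if $\min(1-q-p,q-2p,2\xi p)\le 0$ there is nothing to prove; otherwise necessarily $p<\frac13$ and $2p<q<1-p$, so \Cref{thm:L2}(ii) applies and the two-term estimate above yields the result. I do not anticipate any analytic difficulty beyond this bookkeeping; all the work has been done in \Cref{thm:L2} and in the derivation of \eqref{eq:viscosityrate}.
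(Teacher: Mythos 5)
Your proof is correct and follows essentially the same route as the paper, which simply combines the displayed decomposition $\|X_k-x_\ast\|_{\cX}^2 \le 2\|X_k-x_{\lambda_{k+1}}\|_{\cX}^2 + 2\|x_{\lambda_{k+1}}-x_\ast\|_{\cX}^2$ with \Cref{thm:L2}(ii) and \eqref{eq:viscosityrate}. Your additional bookkeeping --- observing that outside the sub-regime $p\in(0,\tfrac13)$, $q\in(2p,1-p)$ the exponent $\min(1-q-p,q-2p,2\xi p)$ is non-positive and the claim is vacuous, and that $\lambda_{k+1}\le 1$ for large $k$ --- is a welcome clarification that the paper leaves implicit.
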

    Thus, we get the optimal rate of convergence for $p=\frac{1}{4\xi+3}$ and $q=\frac{1+p}{2}$, which gives 
    \begin{align*}
        \E[\|X_k-x_\ast\|_{\cX}^2] &= \cO( k^{-\frac{2\xi}{4\xi+3}}).
    \end{align*}

    For almost sure convergence, \Cref{thm:almostsure} together with  \eqref{eq:viscosityrate} implies strong a.s.~convergence rates:
    
    \begin{corollary}[Strong a.s.~convergence rates] \label{thm:strongconv_as}
	Suppose that the conditions of \Cref{thm:almostsure} are satisfied and assume that \eqref{eq:viscosityrate} is in place for some $\xi>0$. Then for all $\beta \in (0,2q-1)$ it holds that
    \begin{align*}
        \|X_k-x_\ast\|_{\cX}^2 =\cO( k^{- \min(1-q-p, \beta-p , 2\xi p)})\quad \text{ almost surely.}
    \end{align*}
\end{corollary}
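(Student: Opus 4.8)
The plan is to prove the bound exactly along the lines sketched immediately before the statement: start from the triangle inequality
\[
\|X_k-x_\ast\|_{\cX}^2 \le 2\|X_k-x_{\lambda_{k+1}}\|_{\cX}^2 + 2\|x_{\lambda_{k+1}}-x_\ast\|_{\cX}^2,
\]
and then bound the two summands on the right separately, the first by the almost sure rate already established for $\|X_k-x_{\lambda_{k+1}}\|_\cX$ in \Cref{thm:almostsure}(ii), the second by the viscosity rate \eqref{eq:viscosityrate}. The almost sure convergence $\|X_k-x_\ast\|_\cX\to 0$ itself is already part of \Cref{thm:almostsure}, so nothing new is needed for that part of the claim.

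For the first summand, \Cref{thm:almostsure}(ii) says that, on an event $\Omega_1$ of probability one, $\|X_k-x_{\lambda_{k+1}}\|_\cX\in\cO(k^{-\min(\beta-p,\,1-q-p)})$ for every $\beta\in(0,2q-1)$; squaring gives $\|X_k-x_{\lambda_{k+1}}\|_\cX^2\in\cO(k^{-2\min(\beta-p,\,1-q-p)})$ on $\Omega_1$. For the second summand I would use that $\lambda_{k+1}=C_\lambda(k+1)^{-p}\to 0$, so $\lambda_{k+1}\in(0,1]$ for all large $k$; hence \eqref{eq:viscosityrate} applies and yields $\|x_{\lambda_{k+1}}-x_\ast\|_\cX\le C_{\text{reg}}\lambda_{k+1}^{\xi}=C_{\text{reg}}C_\lambda^{\xi}(k+1)^{-\xi p}$, i.e.\ $\|x_{\lambda_{k+1}}-x_\ast\|_\cX^2\in\cO(k^{-2\xi p})$. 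Plugging both estimates into the triangle inequality and using $2\min(\beta-p,1-q-p)\ge\min(\beta-p,1-q-p)$ (valid in the informative regime $\beta\in(p,2q-1)$, which is nonempty precisely because $q>\tfrac{1+p}{2}$, and is anyway irrelevant otherwise since the claim is then vacuous as $\|X_k-x_\ast\|_\cX$ is a.s.\ bounded) gives, on $\Omega_1$,
\[
\|X_k-x_\ast\|_\cX^2\in\cO\big(k^{-\min(1-q-p,\,\beta-p,\,2\xi p)}\big),
\]
which is the assertion.

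I do not expect a genuine obstacle: the corollary is a bookkeeping consequence of \Cref{thm:almostsure} and \eqref{eq:viscosityrate}, with all the heavy lifting (the last-iterate almost sure rates, and the control of $\|x_\lambda-x_\ast\|_\cX$ via the \L ojasiewicz condition) already done. The only points needing a little care are: (a) \eqref{eq:viscosityrate} is assumed only on $\lambda\in(0,1]$, which is harmless since it is invoked only for large $k$; (b) squaring \Cref{thm:almostsure}(ii) in fact produces the stronger exponent $2\min(\beta-p,1-q-p)$, so reporting the weaker $\min(\beta-p,1-q-p,2\xi p)$ is legitimate and keeps the result formally parallel to the $L^2$ statement of \Cref{thm:strongconv_L2}; and (c) everything should be phrased on a single probability-one event, which is unproblematic since it is the event supplied by \Cref{thm:almostsure} (at worst a countable intersection of complements of null sets).
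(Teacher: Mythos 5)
Your proposal is correct and follows exactly the route the paper itself takes: the paper's entire ``proof'' of \Cref{thm:strongconv_as} is the displayed triangle inequality preceding the corollary together with the instruction to insert \Cref{thm:almostsure}(ii) and \eqref{eq:viscosityrate}, which is precisely what you do. Your observation in point (b) — that squaring the rate for $\|X_k-x_{\lambda_{k+1}}\|_\cX$ as literally stated in \Cref{thm:almostsure}(ii) yields the exponent $2\min(\beta-p,1-q-p)$, strictly stronger than the $\min(\beta-p,1-q-p)$ appearing in the corollary — is a fair catch; the stated corollary is consistent with the rate in \Cref{thm:almostsure}(ii) being meant for the \emph{squared} norm (as in the analogous deterministic statement \Cref{thm:detapp}(iii)), but either reading validates the claimed bound.
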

       
    Let $\eps>0$ and choose $\beta =2q-1-\eps$. Then, for the optimal values $p=\frac{1}{6\xi+3}$ and $q=\frac 23$ we get
    \[
        \|X_k-x_\ast\|_{\cX}^2 =\cO( k^{-\frac{2\xi}{6\xi+3}-\eps}), \quad \text{ almost surely.}
    \]
    In \Cref{fig:stoch}, we illustrate the convergence rate of $\|X_k-x_\ast\|_\cX^2$ depending on the decay-rates $p,q$ of  schedules $\alpha$ and $\lambda$ in the situation where $f$ satisfies a Polyak-\L ojasiewicz inequality, i.e. \eqref{eq:loja} is satisfied with $\tau = \frac 12$ and, thus, \eqref{eq:viscosityrate} is satisfied with $\xi=\frac 14$. In \Cref{ssec:toy} we provide a numerical experiment studying the behavior of convergence when implementing reg-SGD for different choices of $\alpha$ and $\lambda$.    
    \begin{figure}[h] 
    \centering
    \includegraphics[width=0.4\linewidth]{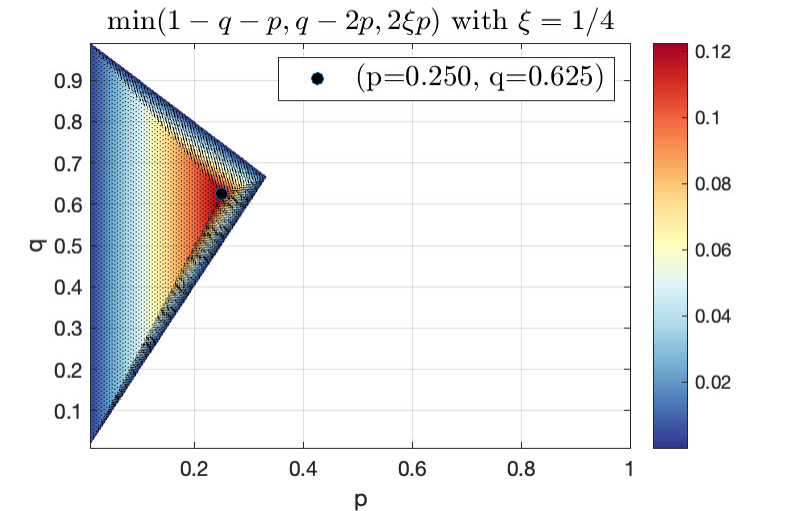}~~\includegraphics[width=0.4\linewidth]{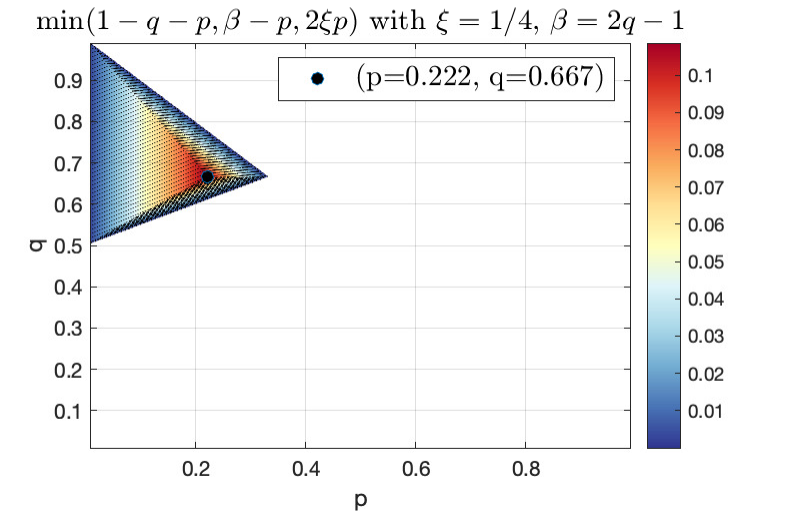}
    \caption{Optimal choices of $p$ and $q$. Left: convergence rate for $\E[\|X_k-x_\ast\|_\cX^2]$ in the situation of \Cref{thm:strongconv_L2}. Right: almost sure convergence rate for $\|X_k-x_\ast\|_\cX^2$ in the situation of \Cref{thm:strongconv_as} under the Polyak-\L ojasiewicz inequality.}
 \label{fig:stoch}
\end{figure}

\section{Practical implications}\label{sec:practical}

In this section, we discuss the relevance and application of reg-SGD with fine-tuned step-size and regularization schedules in the particular setting of linear inverse problems. We perform a concrete experiment to confirm on image reconstruction of tomography images the strength of our theoretically derived step-size and regularization schedules.
\subsection{Why is reg-SGD important?}
As a motivation, we consider a classical linear inverse problem posed in a Hilbert space \cite{BB2018,EHN1996}. Let $\mathcal X$ and $\mathcal Y$ be two (separable) Hilbert spaces, and let $A:\mathcal X\to\mathcal Y$ be a bounded linear operator. Given the observation $y\in\mathcal Y$ the task of the inverse problem is to reconstruct $x\in\mathcal X$ such that $Ax = y$. The reconstruction problem is in general ill-posed, since the solution $Ax = y$ is typically non-unique. In particular, when $A$ has a non-trivial null space, there exist infinitely many solutions. Moreover, when $A$ is a compact operator the generalized Moore-Penrose inverse $A^\dagger$ is unbounded. As a consequence small perturbations in the data can lead to large variations in the reconstruction. One popular approach to solving the inverse problem is to select a stable reconstruction based on the minimum-norm solution
\[x_\ast := \argmin\Big\{\|\hat x\|_{\mathcal X}\Big| \hat x\in\argmin_{x\in\mathcal X} \|Ax-y\|_{\mathcal Y}\Big\}\,.\]
Finding minimum-norm solutions is, as we also show in the present article, closely related to reg-SGD. When the observation $y$ is in the range of $A$, then the unique minimum-norm solution is given by $x_\ast = A^\dagger y$. In practice, the data space $\mathcal Y$ is often described as a function space of variables $s\in D\subset \mathbb R^d$ to $\R$ (e.g., in integral equations or tomography), where $s$ may model a sensor location or angle. Hence, the inversion can be formulated as a risk minimization problem involving data samples
\[ y_i = A [x^\circ](s_i) + \sigma \epsilon_i\in\R, \quad i=1,\dots,n, \]
generated by observations of some forward-mapped ground truth $x^\circ\in\mathcal X$ perturbed by  noise $\epsilon_i$. The empirical objective can be formulated as
\[\min_{x\in\mathcal X} \ f(x),\quad f(x):= \frac{1}{n} \sum_{i=1}^n \big|A [x](s_i) - y_i\big|^2\,.\]
In our analysis we assume access to unbiased gradient estimators for noise-free data $y_i$ in the finite data regime, or for noisy data in the infinite data regime ($n\to\infty$). When analyzing finite noisy data, it is typically necessary to incorporate additional regularization, such as early stopping based on Morozov’s discrepancy principle \cite{AR2009,Bonesky_2008,MOROZOV1966242}. In practical applications, first-order optimization methods, and in particular the use of reg-SGD, is gaining popularity as an efficient approach for solving large-scale inverse problems \cite{Ehrhardt,Burger2016}. 
It would be interesting to explore whether our analysis can be extended to more advanced variational regularization schemes on constrained or non-smooth optimization problems \cite{Chambolle_Pock_2016}. 

In what follows, we present results from an experiment on a task of image reconstruction based on the Radon transformation. This experiment demonstrates the relevance of carefully tuning decreasing step-size and regularization schedules. Two additional experiments are provided in \Cref{sec:appendixD} that highlight the performance of our theoretically derived optimal schedules.

\subsection{Fine-tuned reg-SGD for X-ray tomography}\label{ssec:Radon}

In the context of X-ray tomography, the Radon transform models how a two-dimensional image \( x(z_1, z_2) \) is mapped to its projection data \( R_\theta[x](\cdot) \) via line integrals along rays oriented at various angles \( \theta \in [0,\pi) \), see e.g. \cite{Hansen2021} for details. These projections are obtained by integrating the image along parallel lines, simulating the physical process of X-ray attenuation. Formally, the forward Radon transform at angle $\theta$ is defined as
\[
f\mapsto R_\theta[x](t) = \int_{\mathbb{R}} x(t \cos(\theta)-s\sin(\theta), t\sin(\theta)+s\cos(\theta))  \, ds
\]
where \( x(z_1, z_2) \) is the image to be reconstructed, $t\in\R$ denotes the location along the detector array orthogonal to the projection direction direction $\theta$. For numerical implementation, the Radon transform is discretized over a grid of pixels and a finite set of lines and projection angles. The inverse problem then consists of the reconstruction of an unknown image \( x^\dagger \) from its noisy or incomplete measured projection data \( R_\theta[x] \). For instance, the Radon transform may model X-rays passing through an object, and the reconstruction corresponds to inferring the internal structure of this object from these measurements, similar to assembling a complete image from multiple shadow-like projections. We formulate the reconstruction as the optimization problem
\[
\min_{x} \sum_{i=1}^{M} \frac{1}{2} \| \mathcal{R}_{\theta_i}[x] - g_{\theta_i} \|^2.
\]
We carried out an experiment, reconstructing an image from it's Radon transform (see \Cref{fig:Radon_groundtruth}) solving the ill-posed optimization problem using SGD and reg-SGD with our optimal step-size schedule and a more aggressive regularization schedule. All details of the implementation are provided in \Cref{ssec:implementation_radon}. The experiment demonstrates the strength of our fine-tuned step-size and regularization schedules. While our optimal schedules ($p=\frac{1}{3}$, $q=\frac{2}{3}$) yield fast convergence to the minimum-norm solution, a more aggressive schedule ($p=q=\frac{2}{3}$) stagnates at a suboptimal level. More critically, vanilla SGD with theoretically optimal step-sizes even fails to produce feasible reconstructions. To illustrate this, in \Cref{fig:Radon2} we compare the reconstructed images from reg-SGD with the optimal rates from our analysis, reg-SGD with more aggressive rates, vanilla SGD, and the minimum-norm solution $x_\ast$, which is computed via the Moore-Penrose pseudoinverse $x_\ast = A^\dagger y$. Additionally, we plot both the expected and a.s.~optimality gap in \Cref{fig:Radon3}  as well as the $L^2$- and pathwise-error to the minimum-norm solution \Cref{fig:Radon3}. In this experiment, SGD shows faster convergence in terms of the optimality gap, but ultimately fails to converge to the minimum-norm solution.

\begin{figure}[htb!]
\begin{center}
\includegraphics[width=0.32\textwidth]{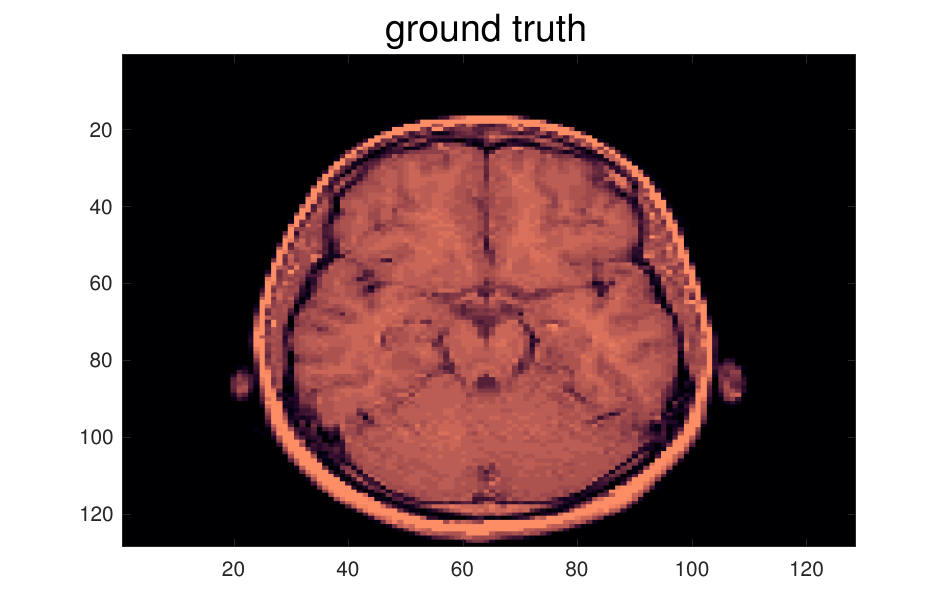}~~\includegraphics[width=0.32\textwidth]{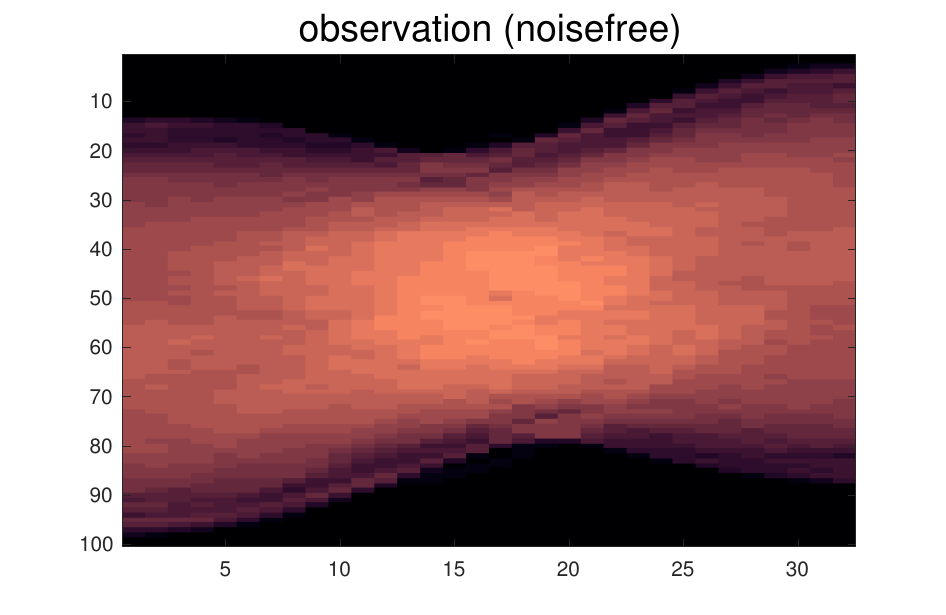}~~\includegraphics[width=0.32\textwidth]{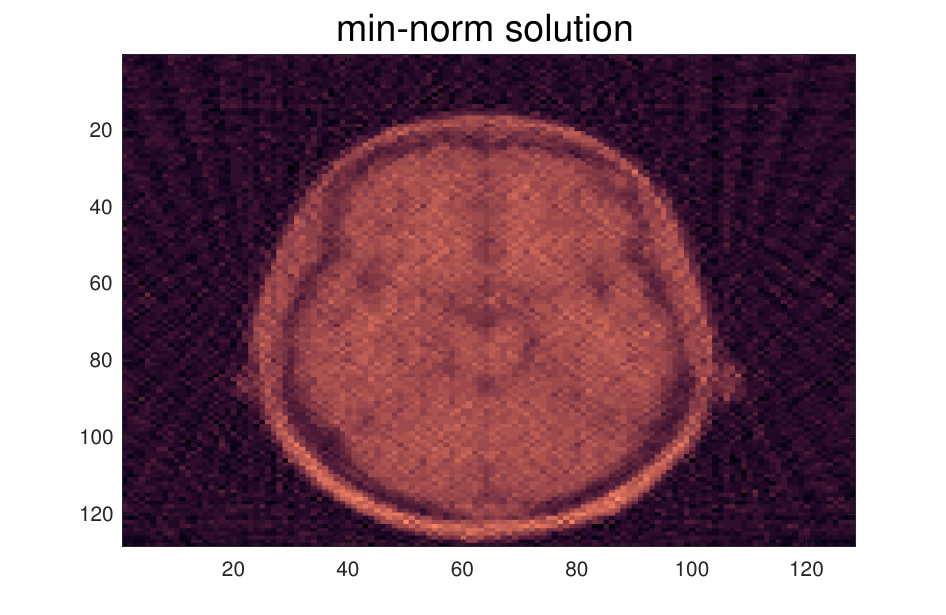} 
\end{center}
\caption{Left: base image. Middle: Radon transform. Right: minimum-norm solution $x_\ast$. 
}\label{fig:Radon_groundtruth}
\end{figure}
\begin{figure}[htb!]
\begin{center}
\includegraphics[width=0.32\textwidth]{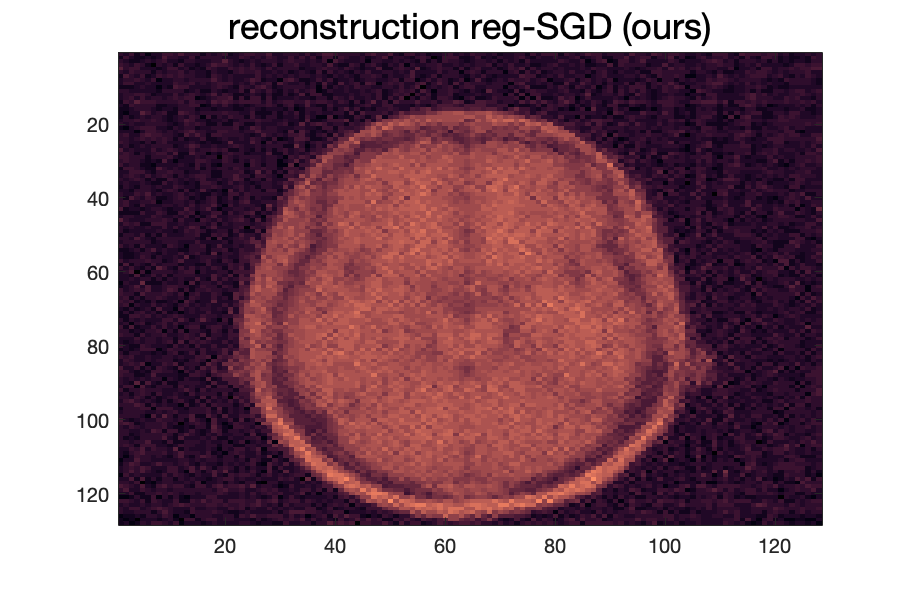}~~\includegraphics[width=0.32\textwidth]{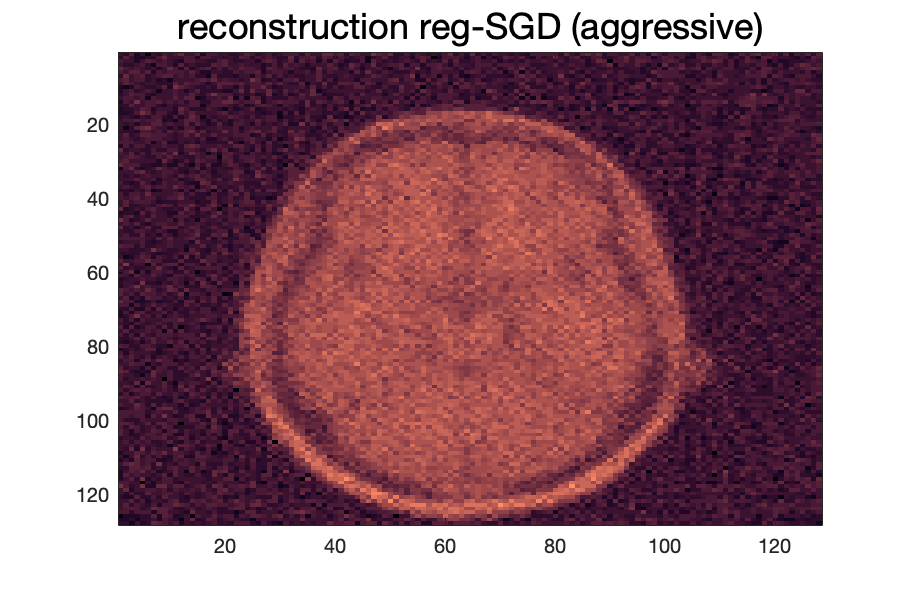}~~\includegraphics[width=0.32\textwidth]{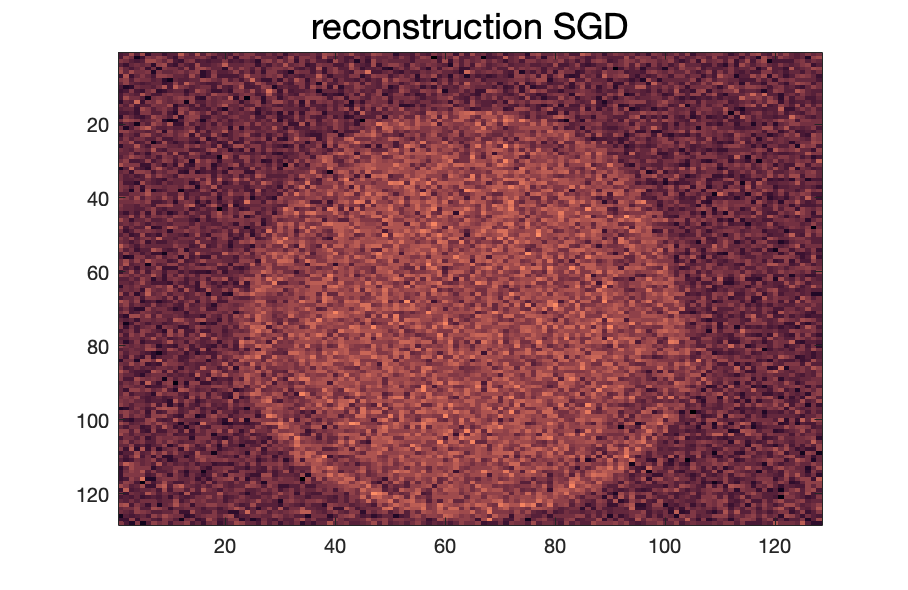}
\end{center}
\caption{Left: reconstruction using reg-SGD with our optimal schedules.  Middle: reconstruction using reg-SGD with more aggressive schedules. Right: reconstruction using vanilla SGD.}\label{fig:Radon2}
\end{figure}

\begin{figure}[htb!]
\begin{center}
\includegraphics[width=0.4\textwidth]{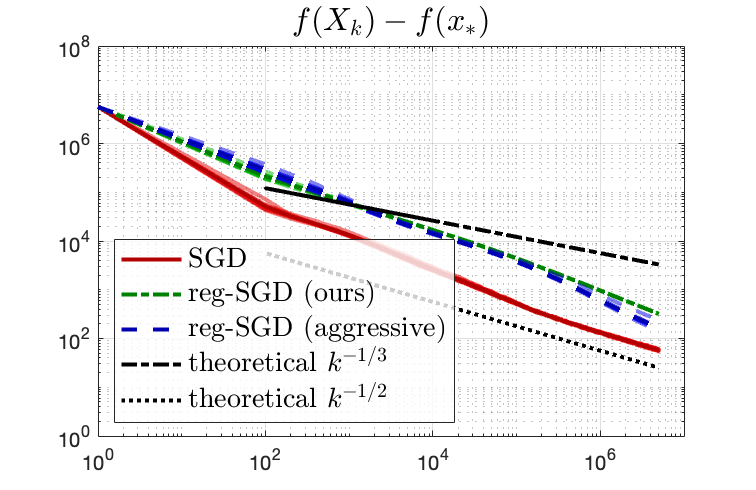}~~\includegraphics[width=0.4\textwidth]{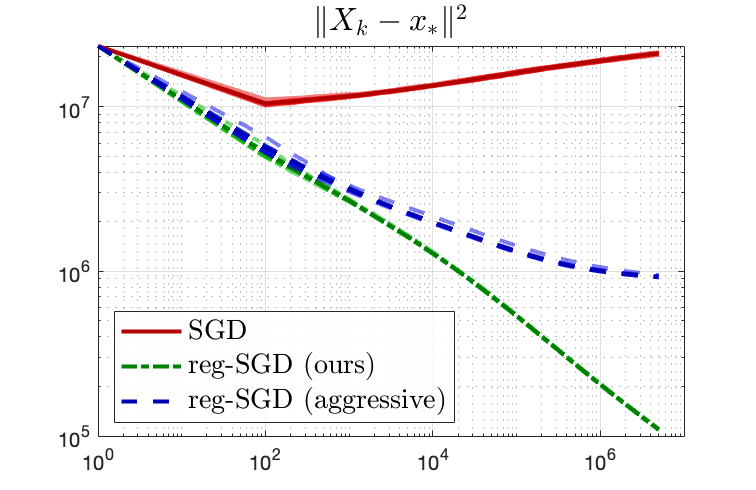}
\end{center}
\caption{Left: pathwise optimality gap $f(X_k)-f(x_\ast)$. Right: pathwise squared error to the minimum-norm solution $\|X_k-x_\ast\|^2$. Each curve represents one of $10$ independent runs, each of length $N=5\cdot 10^6$. The red shaded lines depict individual runs of SGD, while the green dash-dotted and blue dashed shaded lines correspond to reg-SGD. The red solid line shows the average error across runs for SGD, the green bold dash-dotted and blue dashed line shows the average for reg-SGD, and the black dashed line indicates the theoretical convergence rate.}\label{fig:Radon3}
\end{figure}

\section{Conclusion and future work} \label{sec:conclusion}
We analyzed convergence properties of SGD with decreasing Tikhonov regularization. For convex optimization problems that may have infinitely many solutions, we showed that the regularization can always be chosen to guarantee convergence (almost surely and in $L^2$) to the minimum-norm solution. In fact, we provided guidance on explicit choices for polynomial step-size and regularization schedules that ensure best (in the sense of our upper bounds) convergence rates. On the way we revealed interesting mathematical insight into the effect of regularization. In contrast to plain vanilla SGD, boundedness of the approximation sequence is always ensured. A number of concrete applications was provided to show that our theoretical best schedules indeed are consistent with experimental observations, specifically in the experiments of \Cref{ssec:toy}. Since our analysis is limited to the smooth convex setting without constraints, for future work it could be interesting to 
\begin{itemize}
    \item extend results beyond the convexity assumption on $f$, e.g. using gradient domination properties \cite{fatkhullin2022} or the tangential cone condition which is commonly employed in iterative regularization methods for non-linear inverse problems \cite{KaltenbacherNeubauerScherzer2008},
	\item experiment with our suggested decreasing regularization in deep learning problems,
	\item use decreasing regularization schedules to better understand the relation of implicit and explicit regularization present in SGD, and
    \item study other regularization variants in situations in which minimum-norm solutions are not desirable (e.g. linear inverse problems with noisy data). 
\end{itemize}

\section*{Acknowledgments} We thank the reviewers and the area chair for their valuable feedback. Our sincere thanks to Adrian Riekert, whose discussions and feedback were instrumental in shaping the project from its outset. SK acknowledges funding by the Deutsche Forschungsgemeinschaft (DFG, German Research Foundation) – CRC/TRR 388 ''Rough Analysis, Stochastic Dynamics and Related Fields'' – Project ID 516748464.

\bibliographystyle{plain}

\newpage

\appendix

\section{Omitted details and additional numerical experiments}\label[appendix]{sec:appendixD}
In the following section, we give a detailed description of the implementation for our numerical experiment conducted in \Cref{sec:practical}. Moreover, we provide two additional experiments.

\subsection{Implementation details of the Radon transform}\label[appendix]{ssec:implementation_radon}
In the example of \Cref{ssec:Radon}, we discretize the Radon transform using a fixed set of $32$ equally spaced projection angles $\theta_i\in[0,\pi)$, $i=1,\dots,32$ and use $100$ parallel rays per angle. The unknown image is defined on a $128\times 128$ pixel grid and represented as a vector $x^\dagger\in\R^{d}\cong \R^{128\times128} $, $d=128^2$. Given any image $x\in\R^{d}$ its discretized Radon transform is implemented as a matrix-vector product $Ax$, where $A\in\R^{K\times d}$ is the forward operator, and $K=32 \times 100$, corresponds to the total number of measurements (i.e., the number of angle-ray combinations). Each row of $A$ represents a discrete line integral along one ray at a given projection angle. The objective function is then defined as
\[
f(x) := \frac{1}{2} \| Ax - g \|^2\,,\quad x\in\R^d\,,
\]
where $g = (g_{\theta_1},\dots,g_{\theta_{32}})\in\R^K$ collects all projection measurements.

We implemented both SGD and reg-SGD by partitioning the forward operator $A\in\R^{K\times d}$ into blocks $A_i\in\R^{100\times d}$, each corresponding to a fixed projection angle $\theta_i$, $i=1,\dots,32$. At each iteration, the angle $\theta_i$ is sampled uniformly at random, and the gradient of $f$ is approximated by $\nabla f_i(x) = A_i^\top(A_i x - g_{\theta_i}) \in\R^{d}$ and additionally perturbed by independent noise following a multivariate normal distribution with zero mean and covariance $0.5^2\cdot{\mathrm{Id}}$. For SGD we chose the step-size schedule $\alpha_k = 20k^{-1/2}$. For reg-SGD we chose $\alpha_k = 20k^{-2/3}$ and regularization $\lambda_k=0.01 k^{-1/3}$. Moreover, we compare to reg-SGD with $\alpha_k = 20k^{-2/3}$ and regularization $\lambda_k=0.01 k^{-2/3}$, i.e., reg-SGD with a too fast decay of regularization. We initialize all algorithms for each repetition at zero.

\subsection{A toy example}\label[appendix]{ssec:toy}
In this section we present a didactic toy example from \cite{attouch2022acceleratedgradientmethodsstrong}, where the regularization error in terms of $\|x_\lambda-x_\ast\|_{\cX}$ can be calculated exactly. 
Consider the objective function
\[ f(x_1,x_2) := \frac12(x_1+x_2-1)^2\]
with unique minimum-norm solution $x_\ast = (1/2,1/2)$, see the plot in \Cref{sec:contribution}. Note that there exist infinitely many global minima of $f$. Incorporating Tikhonov regularization results in
\[ f_\lambda(x_1,x_2) = f(x_1,x_2) +\frac{\lambda}2 (x_1^2+x_2^2) \quad\text{with}\quad x_\lambda = \Big(\frac{1}{2+\lambda},\frac{1}{2+\lambda}\Big)\,.\]
such that the residuals in the Euclidean distance of $\R^2$ are bounded by
\[ \|x_\ast-x_\lambda\| = \frac{\lambda}{\sqrt{2}(2+\lambda)}\le \frac{\lambda}{2\sqrt{2}}\,.\]
Therefore, equation \eqref{eq:viscosityrate} is satisfied with $\xi = 1$. 

\textit{Implementation details:} We have implemented both vanilla SGD and reg-SGD by hand and initialized both algorithms with same initial state $X_0\sim\mathcal N(0,1)$ and perturbed the exact gradient $\nabla f$ in each iteration by independent noise following a multivariate normal distribution with zero mean and covariance $0.1^2\cdot\mathrm{Id}$. For SGD we chose the step-size schedule $\alpha_k = 0.1k^{-1/2}$, $k\in\N$. For reg-SGD we chose $\alpha_k = 0.1 k^{-q}$ and regularization $\lambda_k = k^{-p}$, where $p=\frac{1}{4\xi+3}$, $q = (1+p)/2$ when considering the $L^2$ convergence rates and $p=(6\xi+3)^{-1}$, $q=2/3$ when considering the almost sure convergence rates see \Cref{thm:strongconv_L2} and \Cref{thm:strongconv_as}.

\begin{figure}[htb!]
\begin{center}
\vspace{-3mm}
\includegraphics[width=0.4\textwidth]{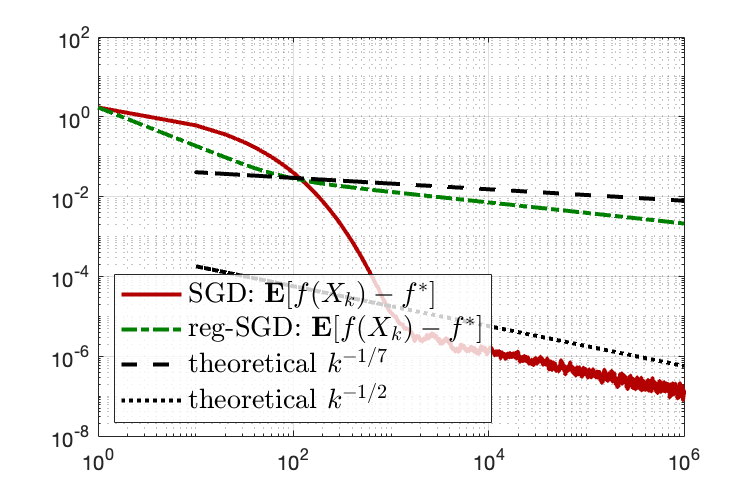}~~\includegraphics[width=0.4\textwidth]{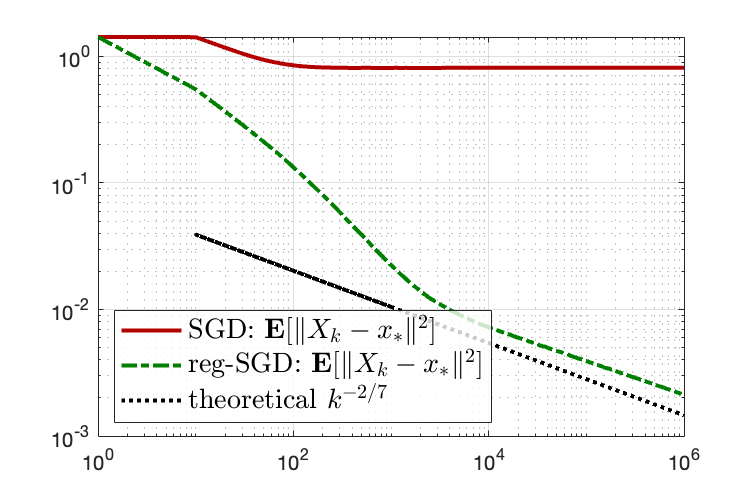}
\end{center}
\vspace{-3mm}
\caption{Left: expected optimality gap $\E[f(X_k)-f(x_\ast)]$. Right: $L^2$-error to the minimum-norm solution $\E[\|X_k-x_\ast\|^2]$. Each curve is computed over $100$ independent runs of length $N=10^6$. The red line shows the average performance of SGD, the green line represents reg-SGD, and the black dotted lines indicate the corresponding theoretical convergence rates from our theorems.}\label{fig:toyexample1}
\end{figure}

\begin{figure}[htb!]
\begin{center}
\vspace{-3mm}
\includegraphics[width=0.4\textwidth]{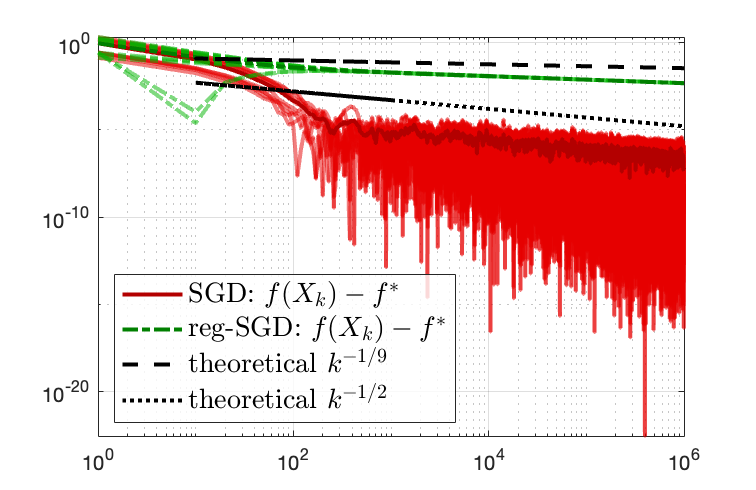}~~\includegraphics[width=0.4\textwidth]{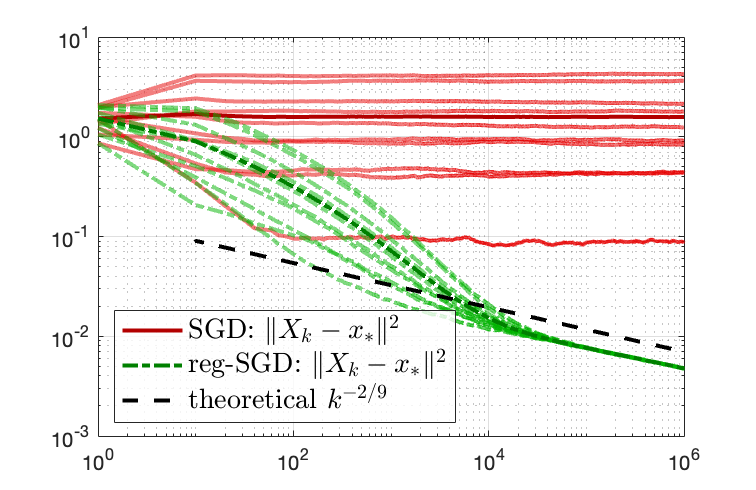}
\end{center}
\vspace{-3mm}
\caption{Left: pathwise optimality gap $f(X_k)-f(x_\ast)$. Right: pathwise squared error to the minimum-norm solution $\|X_k-x_\ast\|^2$. Each curve represents one of $10$ independent runs, each of length $N=10^6$. The red shaded lines depict individual runs of SGD, while the green dash-dotted shaded lines correspond to reg-SGD. The red solid line shows the average error across runs for SGD, the green bold dash-dotted line shows the average for reg-SGD, and the black dashed line indicates the theoretical convergence rate.
}\label{fig:toyexample2}
\end{figure}

The plots of Figures \ref{fig:toyexample1} and \ref{fig:toyexample2} illustrate that reg-SGD converges to the minimum-norm solution both in $L^2$ (\Cref{fig:toyexample1}) and almost surely (\Cref{fig:toyexample2}), as indicated by the vanishing squared error. In contrast, SGD does not converge to the minimum-norm solution, although it achieves convergence in the expected (\Cref{fig:toyexample1}) and pathwise optimality gap (\Cref{fig:toyexample2}). This highlights the regularization effect of reg-SGD in guiding the iterates toward the unique minimum-norm solution as also indicated in \Cref{fig:trajectories}.

\begin{figure}[htb]
\begin{center}
\includegraphics[width=0.4\textwidth]{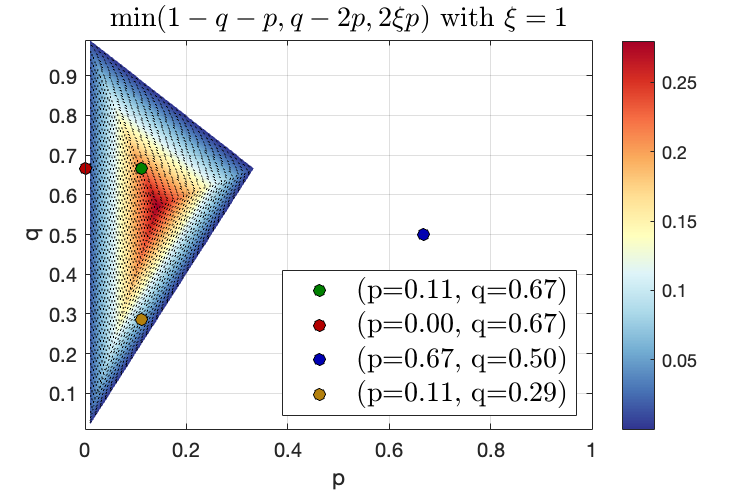}~~\includegraphics[width=0.4\textwidth]{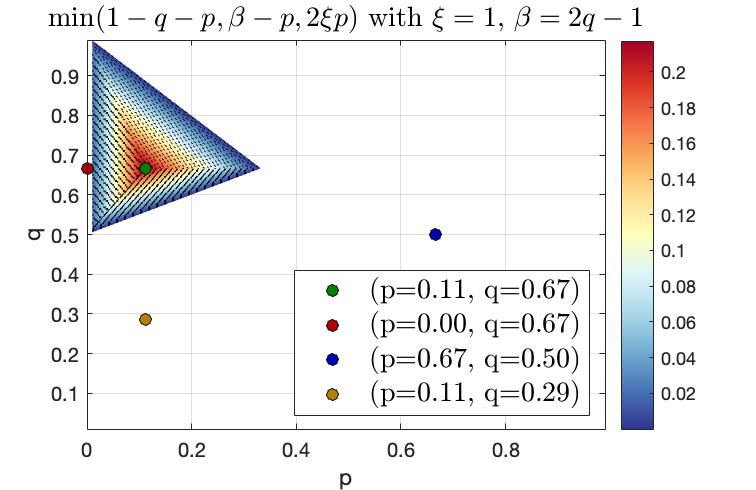}
\end{center}
\caption{Convergence rate for $\E[\|X_k-x_\ast\|_{\cX}^2]$ in the situation of \Cref{thm:strongconv_L2} (left) and almost sure convergence for $\|X_k-x_\ast\|_{\cX}^2$ in the situation of \Cref{thm:strongconv_as} in the considered setting of \Cref{ssec:toy} with $\xi=1$. Furthermore, we display the choices $(p,q)\in\{(0.111,0.667),(0,0.667),(0.67,0.5),(0.111,0.29)\}$ which are simulated and displayed in \Cref{fig:toycomparison1}.
}\label{fig:toycomparison2}
\end{figure}

\begin{figure}[htb!]
\begin{center}
\includegraphics[width=0.45\textwidth]{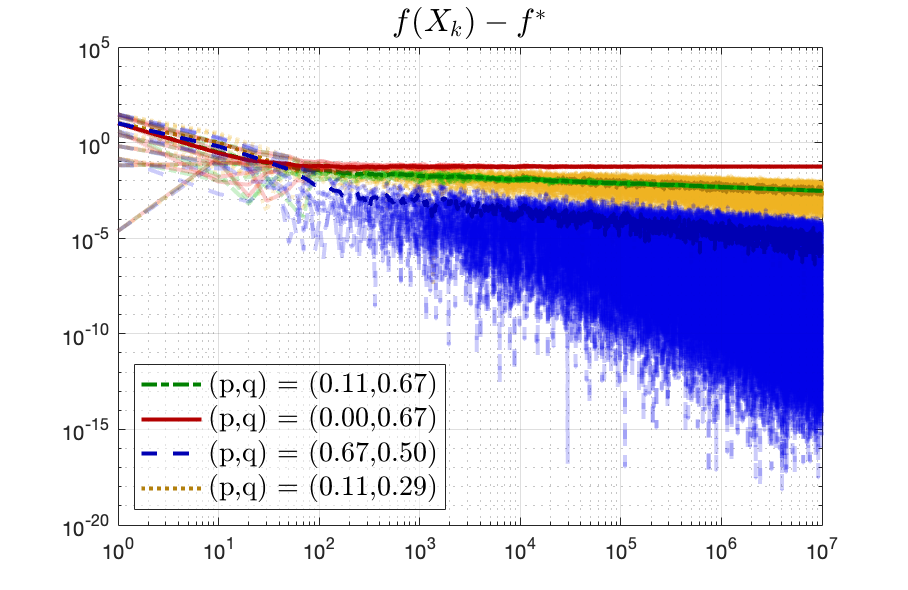}~~\includegraphics[width=0.45\textwidth]{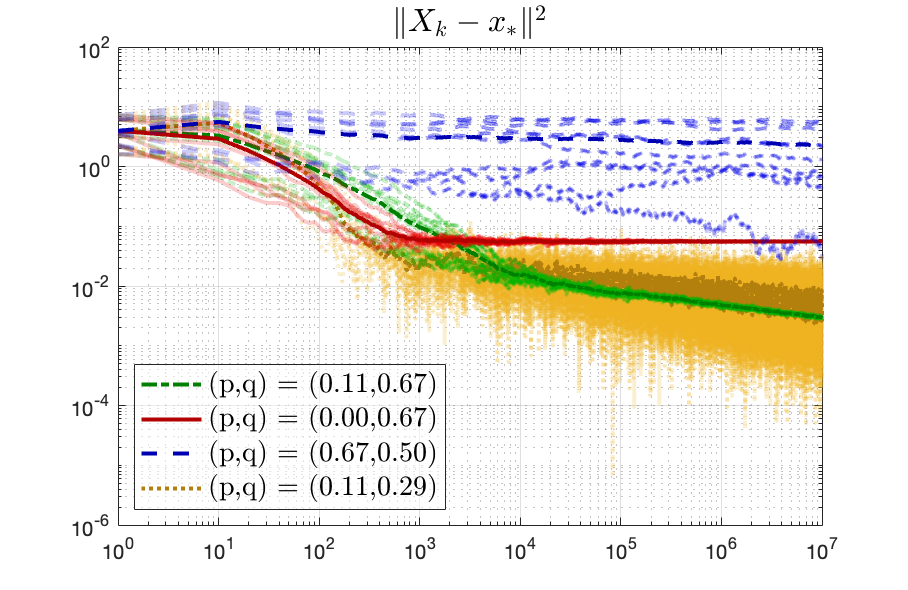}
\end{center}
\caption{Left: pathwise optimality gap $f(X_k)-f(x_\ast)$. Right: pathwise squared error to the minimum-norm solution $\|X_k-x_\ast\|^2$. Each curve represents one of $10$ independent runs, each of length $N=10^7$. The shaded lines depict individual runs of reg-SGD. The solid lines show the average errors for reg-SGD. The different colors correspond to various choices of $(p,q)\in\{(0.111,0.667),(0,0.667),(0.67,0.5),(0.111,0.29)\}$.
}\label{fig:toycomparison1}
\end{figure}

Next, we compare different choices of $(p,q)$ for reg-SGD. In particular, we run reg-SGD with $\alpha_k = 0.2 qk^{-1/2}$ and $\lambda_k = k^{-p}$ for the choices \[ (p,q)\in\{(0.111,0.667),(0,0.667),(0.67,0.5),(0.111,0.29)\}\,.\] Moreover, we increase the noise covariance to $\cN(0,\mathrm{Id})$. The expected convergence behavior is shown in \Cref{fig:toycomparison2} while the resulting errors are displayed in \Cref{fig:toycomparison1}. As expected, we do not observe convergence for the choices $(0,0.667)$ and $(0.67,0.5)$ as the regularization is not turned off, respectively turned off too fast. We observe convergence both a.s.~and in $L^2$ when choosing $(0.111,0.667)$ as suggested by our theory. In contrast, when choosing $(0.111,0.29)$ our theoretical results suggest that the step-size decay is too slow, which we observe in a high variance of the deviation to the minimum-norm solution. In the final experiment, we examine the effect of the initial value $\alpha_1>0$ in the step-size schedule. For SGD, we set $\alpha_k = \alpha_1 k^{-1/2}$, while for reg-SGD we fix $\lambda_k = k^{-0.111}$ and use the step-size schedule $\alpha_k = \alpha_1 k^{-0.667}$. We report both the pathwise optimality gap and the pathwise squared error to the minimum-norm solution for SGD (\Cref{fig:initial_step_SGD}) and reg-SGD (\Cref{fig:initial_step_regSGD}) under varying initial step sizes $\alpha_1\in\{0.01,0.1,1,2\}$.

\begin{figure}[htb!]
\begin{center}
\includegraphics[width=0.5\textwidth]{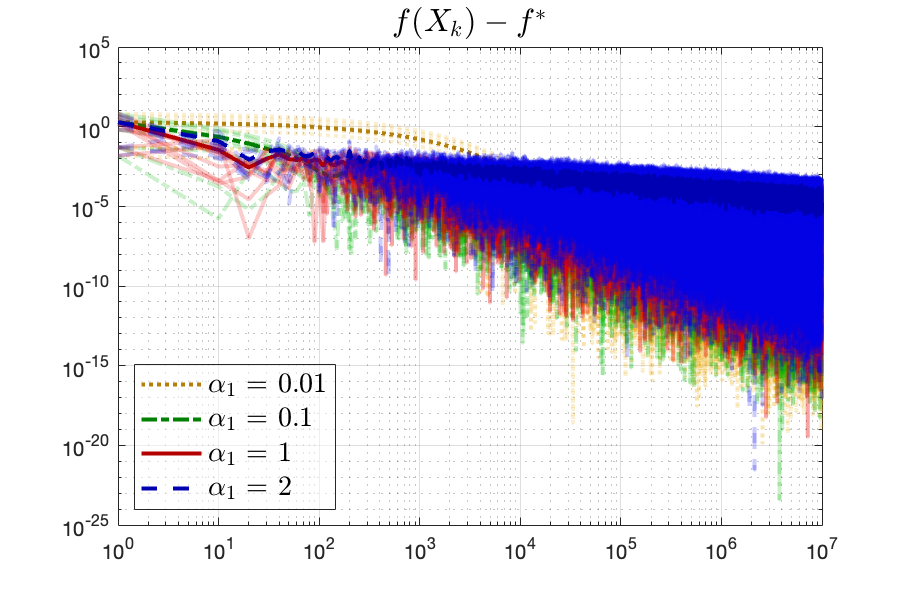}~~\includegraphics[width=0.5\textwidth]{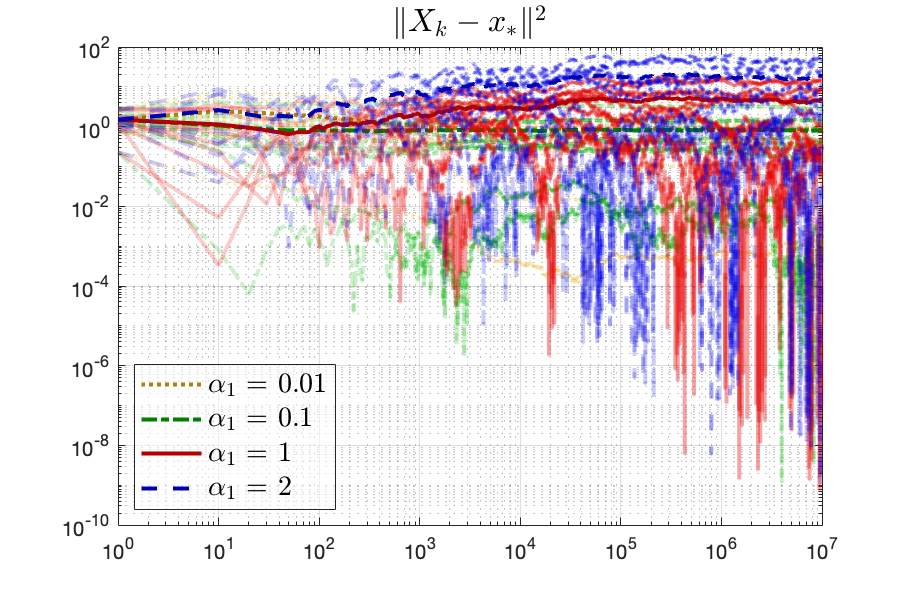}
\end{center}
\caption{Left: pathwise optimality gap $f(X_k)-f(x_\ast)$. Right: pathwise squared error to the minimum-norm solution $\|X_k-x_\ast\|^2$. Each curve represents one of $10$ independent runs of SGD, each of length $N=10^7$. The shaded lines depict individual runs of SGD. The solid lines show the average errors for SGD. The different colors correspond to various choices of $\alpha_1\in\{0.01,0.1,1,2\}$.
}\label{fig:initial_step_SGD}
\end{figure}

\begin{figure}[htb!]
\begin{center}
\includegraphics[width=0.5\textwidth]{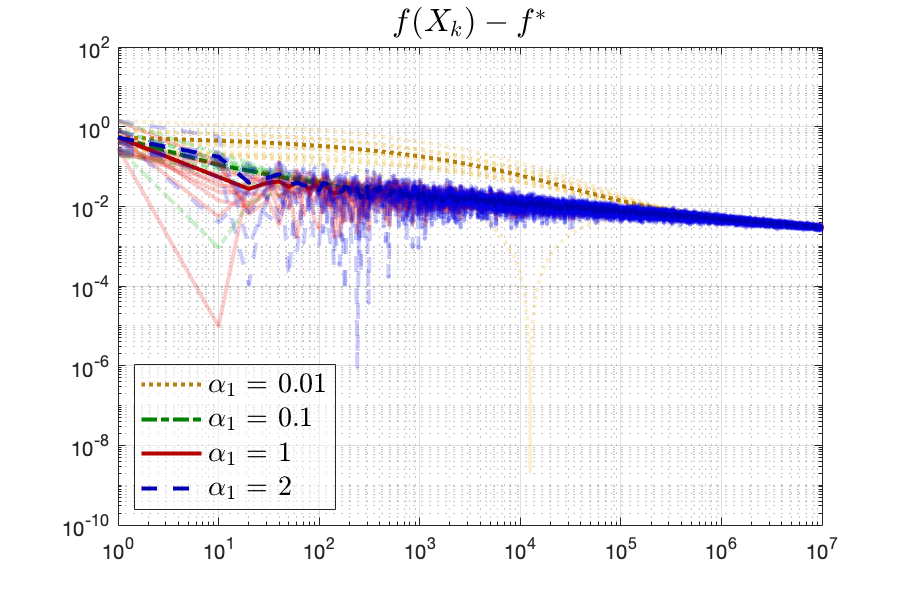}~~\includegraphics[width=0.5\textwidth]{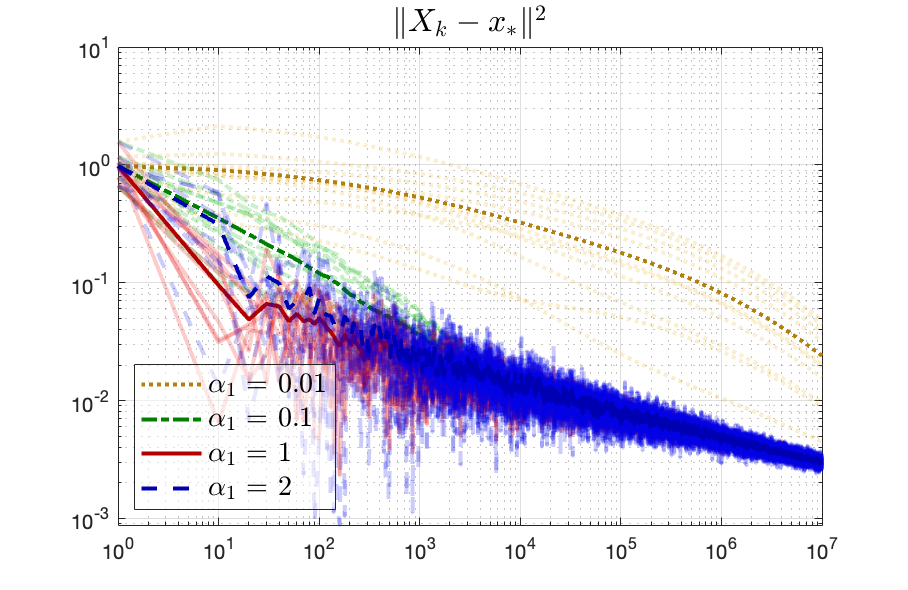}
\end{center}
\caption{Left: pathwise optimality gap $f(X_k)-f(x_\ast)$. Right: pathwise squared error to the minimum-norm solution $\|X_k-x_\ast\|^2$. Each curve represents one of $10$ independent runs of reg-SGD, each of length $N=10^7$. The shaded lines depict individual runs of reg-SGD. The solid lines show the average errors for reg-SGD. The different colors correspond to various choices of $\alpha_1\in\{0.01,0.1,1,2\}$.
}\label{fig:initial_step_regSGD}
\end{figure}


\subsection{ODE based inverse problem.}\label[appendix]{app:ODE}
In the following example, we consider a linear inverse problem arising from the one-dimensional elliptic boundary value problem
\begin{equation}\label{eq:ODE}
\begin{split}
    -\frac{\mathrm{d}^2 p(s)}{\mathrm{d}s^2} + p(s) &= x(s)\,,\quad s\in(0,1)\,,\\
    p(s) &= 0\,,\quad s\in\{0,1\}\,.    
\end{split}
\end{equation}
It consists of recovering the unknown function $x\in L^\infty(D)$ from discrete, noisefree observations $y = Ax \in\R^K$, where $A = \mathcal O\circ G^{-1}$. Here, $G = -\frac{{\mathrm{d}}^2}{{\mathrm{d}}^2s} + {\mathrm{Id}}$ denotes the differential operator on $\mathcal D(G) = H_0^1([0,1])$ and $\mathcal O:H_0^1(D)\to\R^K$ denotes the discrete observation operator evaluating a function $p\in H_0^1([0,1])$ at $K=64$ equidistant observation points $s_k = k/K$, $k=1,\dots,K$, i.e.~, $\mathcal O p(\cdot) = (p(s_1),\dots,p(s_K))^\top$. The ground truth right-hand side $x^\dagger$ used to generate the data is simulated as a random function
\[x^\dagger(s) = \sum_{i=1}^{100} \frac{\sqrt{2}}{\pi} \xi_i \sin(i\pi s),\quad \xi_i\sim\mathcal N(0,i^{-4})\,.\]

\textit{Implementation details:} We numerically approximate the solution operator $G^{-1}$ on a grid $D_\delta \subset[0,1]$ with mesh size $\delta = 2^{-8}$ and represent the unknown function as a vector $x^\dagger\in\R^d$ with $d = 2^8$. The resulting discretized forward model is then given by a matrix $A\in\R^{K\times d}$ and the inverse problem reduces to solving the least-squares problem:
\[ \min_{x\in\R^d} f(x),\quad f(x):= \frac12\|Ax-y\|^2\,,\]
where $y=(p(s_1),\dots,p(s_K))\in\R^K$ contains the discrete measurements associated with \eqref{eq:ODE}. 

\begin{figure}[b]
\begin{center}
\includegraphics[width=0.4\textwidth]{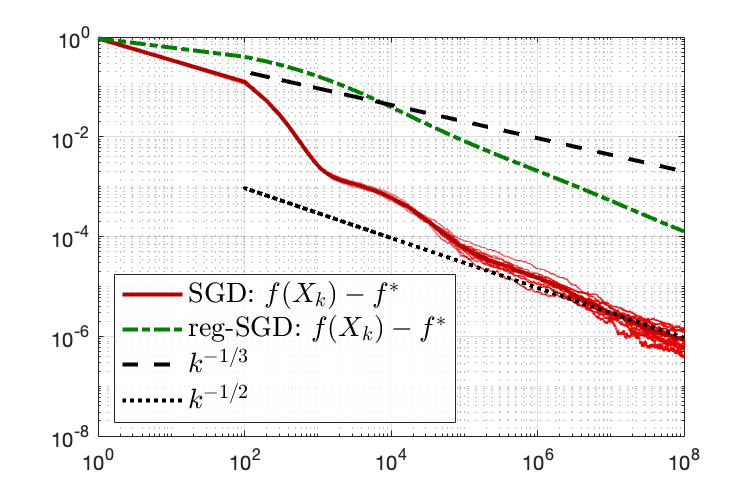}~~\includegraphics[width=0.4\textwidth]{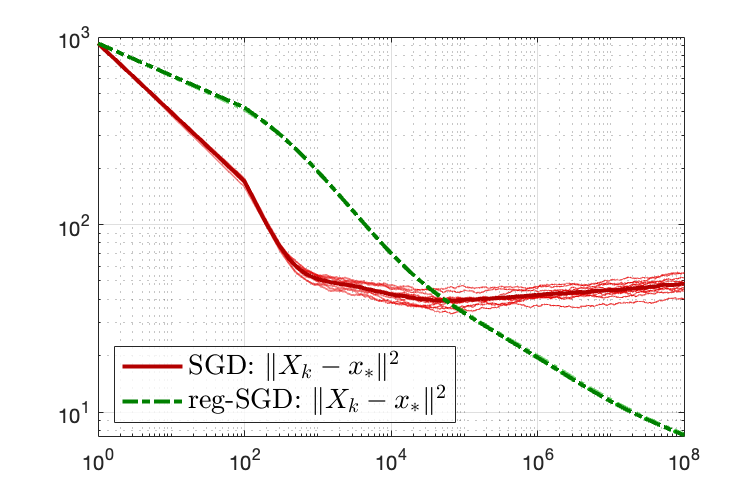}
\end{center}
\caption{Left: pathwise optimality gap $f(X_k)-f(x_\ast)$. Right: pathwise squared error to the minimum-norm solution $\|X_k-x_\ast\|^2$. Each curve represents one of $10$ independent runs, each of length $N=10^7$. The red shaded lines depict individual runs of SGD, while the green dash-dotted shaded lines correspond to reg-SGD. The red solid line shows the average error across runs for SGD, the green bold dash-dotted line shows the average for reg-SGD, and the black dashed line indicates the theoretical convergence rate. }\label{fig:ODE1}
\end{figure}

We implemented both SGD and reg-SGD by partitioning the forward operator $A\in\R^{K\times d}$ into rows $A_i\in\R^{1\times d}$, $i=1,\dots,K$. Hence, $A_i x$ corresponds to the discretized ODE solution at location $s_i$. At each iteration, a batch of $16$ locations $(s_{i_1}, \dots, s_{i_{16}})$ are sampled uniformly at random, and the gradient of $f$ is approximated by $\nabla f(x) \approx \frac 1 {16} \sum_{j=1}^{16} A_{i_{j}}^\top(A_{i_j} x - y_{i_j}) \in\R^{d}$ and additionally perturbed by independent noise following a multivariate normal distribution with zero mean and covariance $0.001^2\cdot{\mathrm{Id}}$. For SGD we chose the step-size schedule $\alpha_k = 100k^{-1/2}$. For reg-SGD we chose $\alpha_k = 100k^{-2/3}$ and regularization $\lambda_k=0.001k^{-1/3}$. We initialize both algorithms for each repetition at zero.

In \Cref{fig:ODE1}, we compare the expected and pathwise~optimality gap (left) as well as the $L^2$ and pathwise error to the minimum-norm solution (right). While SGD shows fast convergence in terms of the optimality gap, it again fails to converge to the minimum-norm solution. In contrast, reg-SGD slows down the convergence in terms of the optimality gap, but safely reconstructs the minimum-norm solution. In \Cref{fig:ODE2} (left), we plot the reconstruction of the unknown right-hand side $x^\dagger$ resulting from the minimum-norm solution $x_\ast = A^\dagger y$, and from the last iterates of SGD and reg-SGD. Moreover, in \Cref{fig:ODE2} (right) we plot the corresponding ODE solutions when solving \eqref{eq:ODE} with the estimated right-hand side.

\begin{figure}[htb!]
\begin{center}
\includegraphics[width=0.4\textwidth]{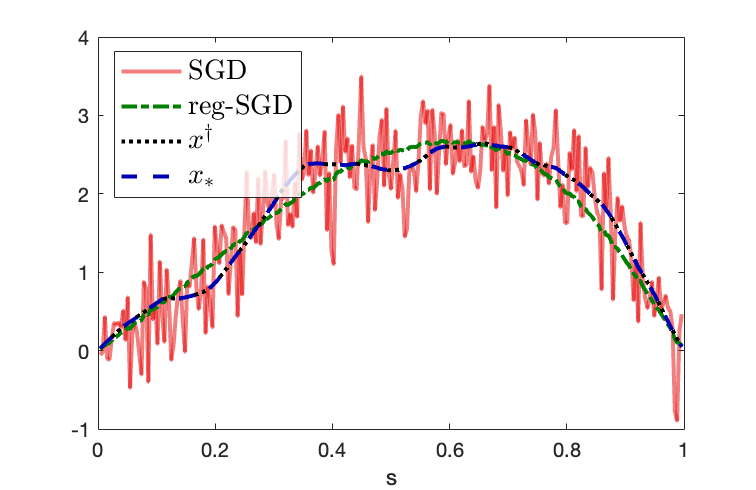}~~\includegraphics[width=0.4\textwidth]{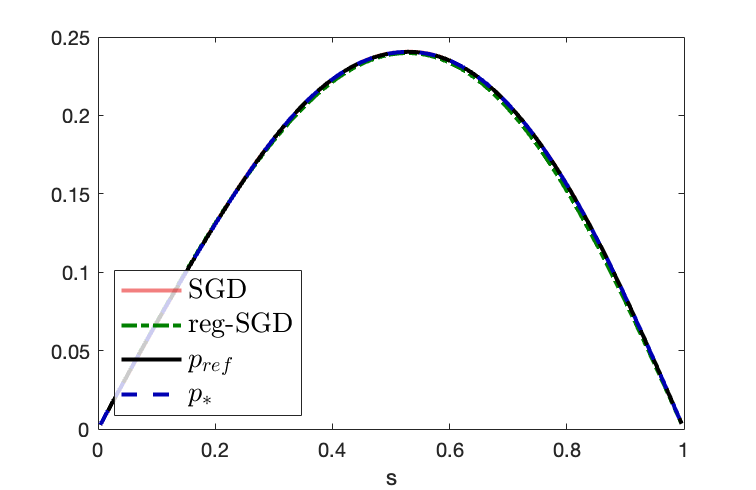}
\end{center}
\caption{Left: reconstruction of $x^\dagger$ using the minimum-norm solution $x_\ast = A^\dagger y$, where $A^\dagger$ is the Moore-Penrose inverse of A, the last iterate of reg-SGD and of SGD. Right: corresponding ODE solutions of \eqref{eq:ODE}. }\label{fig:ODE2}
\end{figure}

\section{Auxiliary results} \label[appendix]{sec:aux}
In the following section, we provide a list of auxiliary results which are needed in the proofs of our main results.

\begin{lemma}\label{lem:regfun_decrease}
    Suppose that $f$ satisfies \Cref{ass:smoothconvex}, then the following statements hold true:
    \begin{enumerate}
        \item[(i)] For all $\lambda, \lambda' \ge 0$ it holds that
        \[f_\lambda(x_\lambda) \le f_{\lambda'}(x_{\lambda'}) + \frac{\lambda-\lambda'}2 \|x_{\lambda'}\|_{\mathcal X}^2\,.\]
        \item[(ii)] For all  $\lambda\ge \lambda'\ge 0$ it holds that
    \[ 0\le f_\lambda(x_\lambda)-f_{\lambda'}(x_{\lambda'}) \le  \frac{\lambda-\lambda'}2 \|x_{\ast}\|_{\mathcal X}^2\,.\]
        \item[(iii)] For all $\lambda \ge 0$ it holds that
        \[f(x)- f(x_\ast) \le f_\lambda(x)-f_\lambda(x_\lambda)+\frac{\lambda}2\|x_\ast\|_{\mathcal X}^2.\]
    \end{enumerate}
\end{lemma}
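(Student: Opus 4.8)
The plan is to obtain all three inequalities directly from the variational characterization $f_\lambda(x_\lambda)=\min_{x\in\cX}f_\lambda(x)$ together with the elementary splitting identity $f_\lambda(x)=f_{\lambda'}(x)+\tfrac{\lambda-\lambda'}{2}\|x\|_\cX^2$, valid for all $x\in\cX$ and all $\lambda,\lambda'\ge 0$. I would first fix the convention $x_0:=x_\ast$ so that the case $\lambda=0$ is meaningful; for $\lambda>0$, the $\lambda$-strong convexity of $f_\lambda$ under \Cref{ass:smoothconvex} already guarantees existence and uniqueness of $x_\lambda$. For (i), I would simply use optimality of $x_\lambda$ to write $f_\lambda(x_\lambda)\le f_\lambda(x_{\lambda'})$ and then apply the splitting identity at $x=x_{\lambda'}$; this gives the claim with no sign restriction on $\lambda-\lambda'$.

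For (ii), the lower bound would follow from pointwise monotonicity: when $\lambda\ge\lambda'\ge 0$ one has $f_{\lambda'}\le f_\lambda$ on all of $\cX$, hence $f_{\lambda'}(x_{\lambda'})\le f_{\lambda'}(x_\lambda)\le f_\lambda(x_\lambda)$. For the upper bound I would first record the norm monotonicity $\|x_{\lambda'}\|_\cX\le\|x_\ast\|_\cX$: testing optimality of $x_{\lambda'}$ against $x_\ast$ yields $f(x_{\lambda'})+\tfrac{\lambda'}{2}\|x_{\lambda'}\|_\cX^2\le f(x_\ast)+\tfrac{\lambda'}{2}\|x_\ast\|_\cX^2$, and since $f(x_{\lambda'})\ge f(x_\ast)$ the $f$-terms collapse, leaving $\tfrac{\lambda'}{2}\|x_{\lambda'}\|_\cX^2\le\tfrac{\lambda'}{2}\|x_\ast\|_\cX^2$ (trivial when $\lambda'=0$ by the convention). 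Feeding this into part (i) together with $\lambda-\lambda'\ge 0$ gives $f_\lambda(x_\lambda)-f_{\lambda'}(x_{\lambda'})\le\tfrac{\lambda-\lambda'}{2}\|x_\ast\|_\cX^2$.

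For (iii), I would rewrite $f(x)-f(x_\ast)=f_\lambda(x)-\tfrac{\lambda}{2}\|x\|_\cX^2-f(x_\ast)$, then use optimality of $x_\lambda$ in the form $f_\lambda(x_\lambda)\le f_\lambda(x_\ast)=f(x_\ast)+\tfrac{\lambda}{2}\|x_\ast\|_\cX^2$, i.e. $-f(x_\ast)\le -f_\lambda(x_\lambda)+\tfrac{\lambda}{2}\|x_\ast\|_\cX^2$, and finally drop the nonnegative term $\tfrac{\lambda}{2}\|x\|_\cX^2$. I do not expect any genuine obstacle: all three statements are one-line consequences of the minimizing property of $x_\lambda$ and the splitting identity for $f_\lambda$. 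The only point needing mild care is the degenerate case $\lambda=\lambda'$ (in particular $\lambda'=0$), where the bounds reduce to equalities or to $0\le 0$ under the convention $x_0=x_\ast$.
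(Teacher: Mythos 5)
Your proposal is correct and follows essentially the same route as the paper: optimality of $x_\lambda$ plus the splitting $f_\lambda = f_{\lambda'} + \tfrac{\lambda-\lambda'}{2}\|\cdot\|_\cX^2$ for (i), pointwise monotonicity of $\lambda \mapsto f_\lambda$ together with $\|x_{\lambda'}\|_\cX \le \|x_\ast\|_\cX$ for (ii), and a comparison with $f_\lambda(x_\ast)$ for (iii). The only (harmless) differences are that you supply a proof of the norm monotonicity $\|x_{\lambda'}\|_\cX \le \|x_\ast\|_\cX$, which the paper simply quotes from the literature, and you prove (iii) directly rather than by applying (ii) with $\lambda' \to 0$.
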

\begin{proof}
    The first assertion is a direct consequence of $f_\lambda(x_\lambda)\le f_{\lambda}(x_{\lambda'}) = f(x_{\lambda'}) + \frac{\lambda}2\|x_{\lambda'}\|_{\mathcal X}^2$, since $x_\lambda$ is the minimum of $f_\lambda$. The second assertion follows from $f_\lambda(x) \ge f_{\lambda'}(x)$ for all $x \in \cX$ and $\|x_{\lambda'}\|_{\mathcal X}\le \|x_\ast\|_{\mathcal X}$. For the third assertion we use (ii) with $\lambda' \to 0$ together with $f(x) \le f_\lambda (x)$ for all $x \in \cX$.
\end{proof}

\begin{lemma}
    Let $f$ be $L$-smooth, then $f_\lambda$ is $L+\lambda$-smooth for any $\lambda\ge0$.
\end{lemma}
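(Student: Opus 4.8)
The plan is to work directly from the definition of $L$-smoothness, which only asks for Lipschitz continuity of the gradient. First I would compute the gradient of the regularized objective: since $f_\lambda(x) = f(x) + \tfrac{\lambda}{2}\|x\|_\cX^2$ and the map $x \mapsto \tfrac{\lambda}{2}\|x\|_\cX^2$ has Fréchet derivative $x \mapsto \lambda x$, we get $\nabla f_\lambda(x) = \nabla f(x) + \lambda x$ for every $x \in \cX$.

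Next I would estimate the difference of gradients at two points $x, y \in \cX$ via the triangle inequality:
\[
\|\nabla f_\lambda(x) - \nabla f_\lambda(y)\|_\cX \le \|\nabla f(x) - \nabla f(y)\|_\cX + \lambda \|x - y\|_\cX.
\]
Applying the $L$-Lipschitz bound on $\nabla f$ to the first term gives the right-hand side $\le (L + \lambda)\|x - y\|_\cX$, which is exactly the claim that $f_\lambda$ is $(L+\lambda)$-smooth. The case $\lambda = 0$ is trivial since then $f_\lambda = f$.

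There is no real obstacle here: the only ingredients are linearity of the derivative, the triangle inequality for $\|\cdot\|_\cX$, and the fact that a scalar multiple of the identity is $\lambda$-Lipschitz on a Hilbert space. The proof is a two-line computation and requires no additional structure beyond \Cref{ass:smoothconvex}.
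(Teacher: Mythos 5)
Your proof is correct and matches the paper's argument exactly: both compute $\nabla f_\lambda(x) = \nabla f(x) + \lambda x$ and apply the triangle inequality together with the $L$-Lipschitz bound on $\nabla f$ to obtain the $(L+\lambda)$-Lipschitz estimate. Nothing is missing.
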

\begin{proof}
    For arbitrary $x,y\in\cX$ we apply triangle inequality to deduce 
    \[ \|\nabla f_\lambda(x)-\nabla f_\lambda(y)\|_\cX = \|\nabla f(x)-\nabla f(y)+ \lambda(x-y)\|_\cX\le L\|x-y\|_{\cX} + \lambda \|x-y\|_{\cX}\,. \]
\end{proof}

Note that by $L$-smoothness the descent condition holds, meaning that for any $x,y\in\cX$ we have
\begin{equation} \label{eq:descent}
f(y) \le f(x) + \langle \nabla f(x), y-x\rangle_{\cX} + \frac{L}{2}\|x-y\|_{\cX}^2\,.
\end{equation}

The following lemma is similar to~\cite[Lemma 3]{attouch2022acceleratedgradientmethodsstrong}. For completeness we give a proof.

\begin{lemma} \label{lem:attouch1app}
    Under \Cref{ass:smoothconvex} the following estimates are satisfied for all $x \in \cX$ and $\lambda \ge 0$:
    \begin{enumerate}
        \item[(i)] $f(x)-f(x_\ast) \le f_{\lambda}(x)-f_{\lambda}(x_\lambda) + \frac{\lambda}{2} \|x_\ast\|_{\cX}^2$ 
        \item[(ii)] $\|x-x_{\lambda}\|_{\cX}^2 \le \frac{2(f_{\lambda}(x)-f_{\lambda}(x_\lambda))}{\lambda}$
    \end{enumerate}
\end{lemma}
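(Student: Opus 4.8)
The plan is to treat the two assertions separately, observing that (i) has in fact already been established and that (ii) is a direct application of strong convexity.

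For (i), I note that the claimed inequality is \emph{verbatim} the statement of \Cref{lem:regfun_decrease}(iii): there we proved $f(x)-f(x_\ast)\le f_\lambda(x)-f_\lambda(x_\lambda)+\tfrac{\lambda}{2}\|x_\ast\|_{\cX}^2$ for every $x\in\cX$ and $\lambda\ge 0$, by combining part (ii) of that lemma (taken in the limit $\lambda'\to 0$, which gives $f_\lambda(x_\lambda)-f(x_\ast)\le\tfrac{\lambda}{2}\|x_\ast\|_{\cX}^2$) with the pointwise bound $f(x)\le f_\lambda(x)$. So for (i) I would simply cite \Cref{lem:regfun_decrease}(iii); no further work is needed.

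For (ii), the key observation is that under \Cref{ass:smoothconvex} the function $f$ is convex, hence $f_\lambda=f+\tfrac{\lambda}{2}\|\cdot\|_{\cX}^2$ is $\lambda$-strongly convex for every $\lambda\ge 0$. Since $f_\lambda$ is differentiable and $x_\lambda$ is its (unique, for $\lambda>0$) minimizer, the first-order optimality condition gives $\nabla f_\lambda(x_\lambda)=0$. The standard strong-convexity inequality around the point $x_\lambda$ then reads, for all $x\in\cX$,
\[
f_\lambda(x)\ \ge\ f_\lambda(x_\lambda)+\langle\nabla f_\lambda(x_\lambda),\,x-x_\lambda\rangle_{\cX}+\frac{\lambda}{2}\|x-x_\lambda\|_{\cX}^2\ =\ f_\lambda(x_\lambda)+\frac{\lambda}{2}\|x-x_\lambda\|_{\cX}^2,
\]
using $\nabla f_\lambda(x_\lambda)=0$ for the last equality. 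Rearranging yields $\|x-x_\lambda\|_{\cX}^2\le \tfrac{2}{\lambda}\big(f_\lambda(x)-f_\lambda(x_\lambda)\big)$, which is exactly (ii). For the degenerate case $\lambda=0$ the statement is vacuous (the right-hand side is understood as $+\infty$, or one simply restricts to $\lambda>0$, which is all that is used in the applications).

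There is essentially no obstacle here: the only point worth a sentence of care is justifying the strong-convexity inequality itself, which follows from convexity of $f$ together with the identity $\tfrac{\lambda}{2}\|x\|_{\cX}^2-\tfrac{\lambda}{2}\|x_\lambda\|_{\cX}^2-\lambda\langle x_\lambda,x-x_\lambda\rangle_{\cX}=\tfrac{\lambda}{2}\|x-x_\lambda\|_{\cX}^2$ (a completion of squares in the Hilbert space $\cX$), added to the first-order convexity inequality $f(x)\ge f(x_\lambda)+\langle\nabla f(x_\lambda),x-x_\lambda\rangle_{\cX}$. Everything else is a one-line rearrangement.
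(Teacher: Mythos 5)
Your proposal is correct and follows essentially the same route as the paper: part (i) is indeed identical in content to \Cref{lem:regfun_decrease}(iii) (the paper simply re-derives it in three lines via $f(x)-f(x_\ast)=f_\lambda(x)-f_\lambda(x_\ast)+\frac{\lambda}{2}(\|x_\ast\|_{\cX}^2-\|x\|_{\cX}^2)$ and $f_\lambda(x_\lambda)\le f_\lambda(x_\ast)$ rather than citing the earlier lemma), and part (ii) is proved in the paper exactly as you do, from the $\lambda$-strong convexity of $f_\lambda$ together with $\nabla f_\lambda(x_\lambda)=0$. No gaps; your extra remark on the $\lambda=0$ case is a harmless clarification the paper leaves implicit.
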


\begin{proof}
    (i): For arbitrary $x\in\cX$ we have
    \begin{align*}
        f(x)-f(x_\ast) &= f_{\lambda}(x)-f_{\lambda}(x_\ast) + \frac{\lambda}{2} (\|x_\ast\|_{\cX}^2-\|x\|_{\cX}^2) \\
        &= f_{\lambda}(x)-f_{\lambda}(x_\lambda)+f_{\lambda}(x_\lambda)-f_{\lambda}(x_\ast) + \frac{\lambda}{2} (\|x_\ast\|_{\cX}^2-\|x\|_{\cX}^2) \\
        & \le f_{\lambda}(x)-f_{\lambda}(x_\lambda) + \frac{\lambda}{2} \|x_\ast\|_{\cX}^2 .
    \end{align*}
    (ii): The second assertion follows from the $\lambda$-strong convexity of $f_{\lambda}$ and $\nabla f_\lambda(x_\lambda)=0$.
\end{proof}

\begin{lemma}\label{lem:aux1}
    Let $p>0$ and $\lambda_k = \frac{1}{k^p}$, $k\in\N$. Then for all $k \in \N$ one has
    \[\frac{p}{(k+1)^{p+1}} \le \lambda_k-\lambda_{k+1}\le \frac{p}{k^{p+1}} \]
    and
    \[
        \frac{\lambda_{k-1}}{\lambda_{k}} = 1 + \frac{p}{k} + o\Bigl(\frac 1k\Bigr)\,.
    \]
\end{lemma}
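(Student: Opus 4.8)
The plan is to handle the two assertions separately; both are elementary calculus estimates.

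\textbf{First inequality.} I would express the forward difference as an integral of the derivative of $t\mapsto t^{-p}$. Since $\frac{\mathrm d}{\mathrm dt}\bigl(-t^{-p}\bigr)=p\,t^{-p-1}$, one has
\[
\lambda_k-\lambda_{k+1}=\frac{1}{k^p}-\frac{1}{(k+1)^p}=\int_k^{k+1}\frac{p}{t^{p+1}}\,\mathrm dt .
\]
The integrand $t\mapsto p\,t^{-p-1}$ is positive and strictly decreasing on $[k,k+1]$, so it is bounded below by its value $p(k+1)^{-p-1}$ at the right endpoint and above by its value $pk^{-p-1}$ at the left endpoint; integrating these two constants over an interval of length one yields exactly $\frac{p}{(k+1)^{p+1}}\le\lambda_k-\lambda_{k+1}\le\frac{p}{k^{p+1}}$. (Equivalently, the mean value theorem applied to $g(t)=t^{-p}$ gives $\lambda_k-\lambda_{k+1}=p\,\xi^{-p-1}$ for some $\xi\in(k,k+1)$, and monotonicity of $t\mapsto t^{-p-1}$ finishes it.)

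\textbf{Asymptotic ratio.} This identity is understood for $k\ge 2$ (for $k=1$ the quantity $\lambda_0$ is not defined). I would write
\[
\frac{\lambda_{k-1}}{\lambda_k}=\frac{k^p}{(k-1)^p}=\Bigl(1+\tfrac{1}{k-1}\Bigr)^p
\]
and invoke the first-order Taylor expansion $(1+x)^p=1+px+o(x)$ as $x\to 0$, with $x=\tfrac{1}{k-1}\to 0$. This gives $\frac{\lambda_{k-1}}{\lambda_k}=1+\frac{p}{k-1}+o\bigl(\tfrac{1}{k-1}\bigr)$. Finally, $\frac{p}{k-1}-\frac{p}{k}=\frac{p}{k(k-1)}=o\bigl(\tfrac1k\bigr)$ and $o\bigl(\tfrac{1}{k-1}\bigr)=o\bigl(\tfrac1k\bigr)$, so replacing $k-1$ by $k$ in the leading term only perturbs the remainder by $o(1/k)$, and the claimed expansion follows.

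There is no genuine obstacle here; the only care needed is the bookkeeping of error terms in the second part — verifying that the passage from $\tfrac{1}{k-1}$ to $\tfrac1k$ in the linear term is absorbed into the $o(1/k)$ remainder — and noting the implicit restriction $k\ge 2$. Both computations are short and entirely self-contained, relying only on monotonicity of a power function and a one-term Taylor expansion.
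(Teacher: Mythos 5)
Your proof is correct and follows essentially the same route as the paper: the first inequality via the mean value theorem (or, equivalently, the integral representation) for $t\mapsto t^{-p}$ together with monotonicity of $t\mapsto t^{-p-1}$, and the second via a one-term Taylor expansion. The only cosmetic difference is that you expand $(1+x)^p$ at $x=\tfrac{1}{k-1}$ and then shift the linear term to $\tfrac{p}{k}$, whereas the paper expands $s\mapsto s^{-p}$ at $s=1$ with increment $-\tfrac1k$ and gets $\tfrac{p}{k}$ directly; your explicit note on the $k\ge 2$ restriction is a small point the paper leaves implicit.
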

\begin{proof}
    We define $\varphi(s)=s^{-p}$, $s\in(0,\infty)$, and note that $\varphi'(s) = -ps^{-(p+1)}$. By the mean value theorem, for all $k\in\N$ there exists a $c\in[k,k+1]$ such that 
    \[ \lambda_k -\lambda_{k+1} = \varphi(k)-\varphi(k+1) = -\varphi'(c)(k+1-k) = \frac{p}{c^{p+1}}\,.\]
    The first assertion follows by the monotonicity of $s\mapsto1/s^{p+1}$.
    For the second assertion, we use Taylor's approximation theorem 
    at $s=1$ to get
	\begin{align*}
		\frac{\lambda_{k-1}}{\lambda_{k}} = \Bigl( \frac{k-1}{k}\Bigl)^{-p} &= \varphi\Bigl( \frac{k-1}{k} \Bigr) = \varphi(1) + \varphi'(1) \Bigl( \frac{k-1}{k}-1 \Bigr) + o \Big( \Big|\frac{k-1}{k}-1\Big| \Big) \\
		& = 1 + \frac{p}{k} + o \Bigl( \frac{1}{k} \Bigr).
	\end{align*}
\end{proof}


\begin{lemma} [Robbins-Siegmund theorem, see Theorem~1 in~\cite{RS1971}] \label{lem:RS}
    Let $(\mathcal F_k)_{k \in \N}$ be a filtration and $(X_k)_{k \in \N}, (Y_k)_{k \in \N}$, and $(Z_k)_{k \in \N}$ be $(\mathcal F_k)_{k \in \N}$-adapted sequences of non-negative random variables. Let $(\gamma_k)_{k \in \N}$ be a sequence of non-negative reals and assume that
    \begin{enumerate}
        \item [(i)] $\prod_{k=1}^\infty (1+\gamma_k)<\infty$,
        \item[(ii)] $\sum_{k=1}^\infty Z_k < \infty$, almost surely, and
        \item[(iii)] $\E[Y_{k+1} \mid\mathcal F_k] \le (1+\gamma_k)Y_k-X_k+Z_k$, almost surely for all $k\in \N$.
    \end{enumerate}
    Then $\sum_{k=1}^\infty X_k < \infty$ and $(Y_k)_{k \in \N}$ converges almost surely.
\end{lemma}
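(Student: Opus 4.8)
This is the classical Robbins--Siegmund almost-supermartingale convergence theorem, and the plan is to reduce it to the ordinary convergence theorem for nonnegative supermartingales in four steps: (a) absorb the multiplicative factors $1+\gamma_k$ by a deterministic rescaling, reducing to the case $\gamma_k\equiv0$; (b) rewrite the inequality (iii) as a genuine supermartingale by adding the running sum $\sum_{j<k}X_j$ and subtracting $\sum_{j<k}Z_j$; (c) localize along the partial sums of $(Z_k)$ to obtain a \emph{nonnegative} supermartingale on a sequence of events exhausting the sample space; and (d) read off both conclusions from the almost sure limit.

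For (a) I would set $\beta_k:=\prod_{j=1}^{k-1}(1+\gamma_j)$, so that $\beta_1=1$, $(\beta_k)$ is nondecreasing, and by (i) it converges to some $\beta_\infty\in[1,\infty)$. Dividing (iii) by $\beta_{k+1}=(1+\gamma_k)\beta_k$ yields $\E[\,Y_{k+1}/\beta_{k+1}\mid\mathcal F_k\,]\le Y_k/\beta_k-X_k/\beta_{k+1}+Z_k/\beta_{k+1}$; since $1\le\beta_k\le\beta_\infty$, the rescaled sequences are nonnegative and adapted, $\sum_k Z_k/\beta_{k+1}<\infty$ almost surely, convergence of $Y_k/\beta_k$ is equivalent to convergence of $Y_k$, and $\sum_k X_k/\beta_{k+1}<\infty$ is equivalent to $\sum_k X_k<\infty$. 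Hence I may assume $\gamma_k\equiv0$. For (b) I would put $W_k:=Y_k+\sum_{j=1}^{k-1}X_j-\sum_{j=1}^{k-1}Z_j$; using (iii), $\E[W_{k+1}\mid\mathcal F_k]\le(Y_k-X_k+Z_k)+\sum_{j=1}^{k}X_j-\sum_{j=1}^{k}Z_j=W_k$, so $(W_k)$ is a supermartingale, but bounded below only by $-\sum_{j\ge1}Z_j$, which is almost surely but not deterministically finite.

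For (c), for each $m\in\N$ I would use the stopping time $\tau_m:=\inf\{k\ge1:\sum_{j=1}^{k}Z_j>m\}$ (a stopping time since $(Z_k)$ is adapted), noting that on the full-measure event $\{\sum_{j\ge1}Z_j<\infty\}$ one has $\tau_m\uparrow\infty$ as $m\to\infty$. The stopped process $W_{k\wedge\tau_m}$ is again a supermartingale, and $W_{k\wedge\tau_m}\ge-\sum_{j=1}^{(k\wedge\tau_m)-1}Z_j\ge-m$ since the partial sums of $(Z_j)$ stay $\le m$ strictly before $\tau_m$; thus $W_{k\wedge\tau_m}+m$ is a nonnegative supermartingale (its expectation is controlled by $\E[Y_1]+m$), hence converges almost surely. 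On $\{\tau_m=\infty\}$ this says $(W_k)$ converges, and since $\Prob\big(\bigcup_m\{\tau_m=\infty\}\big)=\Prob\big(\sum_j Z_j<\infty\big)=1$, the sequence $(W_k)$ converges almost surely. For (d): because $Y_k\ge0$ and $\sum_j Z_j<\infty$ a.s., the identity $W_k=Y_k+\sum_{j<k}X_j-\sum_{j<k}Z_j$ forces the nondecreasing partial sums $\sum_{j<k}X_j$ to remain bounded, so $\sum_j X_j<\infty$ a.s., and then $Y_k=W_k-\sum_{j<k}X_j+\sum_{j<k}Z_j$ converges almost surely as a sum of convergent sequences; undoing the rescaling of (a) gives the claim.

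The step I expect to demand the most care is the localization (c): one must choose the stopping times so that the stopped process is a genuine \emph{integrable} nonnegative supermartingale, which requires being careful about the adaptedness conventions (whether $\sum_{j\le k}Z_j$ is $\mathcal F_k$- or $\mathcal F_{k-1}$-measurable) and, unless integrability of $Y_1$ is taken for granted, an additional localization of $(Y_k)$ or an argument through the $[0,\infty]$-valued conditional expectation together with monotone convergence. A second, milder point is the extraction of $\sum_k X_k<\infty$ from the mere convergence of $(W_k)$, which works precisely because the $X$-partial sums are monotone and $Y_k\ge0$.
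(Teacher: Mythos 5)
The paper gives no proof of this lemma at all --- it is quoted verbatim as Theorem~1 of Robbins and Siegmund \cite{RS1971} and used as a black box --- so there is nothing internal to compare your argument against. Your reconstruction is the standard proof of that classical result and is correct: the rescaling by $\beta_k=\prod_{j<k}(1+\gamma_j)$, the telescoping into the supermartingale $W_k=Y_k+\sum_{j<k}X_j-\sum_{j<k}Z_j$, the localization via $\tau_m=\inf\{k:\sum_{j\le k}Z_j>m\}$ (which is a legitimate stopping time here because $(Z_k)$ is $(\mathcal F_k)$-adapted, so the bound $W_{k\wedge\tau_m}\ge -m$ holds as you claim), and the final monotonicity argument extracting $\sum_k X_k<\infty$ before recovering convergence of $(Y_k)$ all go through. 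The one point you rightly flag --- integrability of $W_k$, since the lemma does not assume $\E[Y_1]<\infty$ --- is the only genuine loose end; it is handled in the standard way by an additional localization on $\{Y_1\le n\}$ (or by working with the $[0,\infty]$-valued conditional expectation), and does not affect the structure of the argument.
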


We will use the following two versions of the Robbins-Siegmund theorem.
\begin{corollary} \label{cor:RSeasy}
    Let $(\mathcal F_k)_{k \in \N}$ be a filtration, let $(z_k)_{k \in \N}$ be a summable sequence of non-negative reals and let $(Y_k)_{k \in \N}$ be an $(\mathcal F_k)_{k \in \N}$-adapted sequence that is uniformly bounded from below. Assume that for all $k \in \N$
    \begin{align} \label{eq:RSeasy}
    \E[Y_{k+1} \mid\mathcal F_k] \le Y_k +z_k.
    \end{align}
    Then $(Y_k)_{k \in \N}$ converges almost surely.
\end{corollary}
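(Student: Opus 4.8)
The plan is to reduce the statement to the Robbins--Siegmund theorem (\Cref{lem:RS}) by a simple affine shift that turns the iterates into non-negative random variables. Let $c\in\R$ be a deterministic lower bound with $Y_k\ge c$ almost surely for every $k\in\N$, whose existence is guaranteed by the assumed uniform boundedness from below, and set $\tilde Y_k:=Y_k-c\ge 0$. Then $(\tilde Y_k)_{k\in\N}$ is again $(\mathcal F_k)_{k\in\N}$-adapted and non-negative, and subtracting the constant $c$ in the hypothesis yields $\E[\tilde Y_{k+1}\mid\mathcal F_k]=\E[Y_{k+1}\mid\mathcal F_k]-c\le Y_k+z_k-c=\tilde Y_k+z_k$ for all $k\in\N$.

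Next I would invoke \Cref{lem:RS} applied to the sequence $(\tilde Y_k)_{k\in\N}$ with the choices $\gamma_k=0$, $X_k\equiv 0$, and $Z_k=z_k$ (a deterministic, hence adapted, sequence of non-negative reals). The three hypotheses are immediate: $\prod_{k=1}^\infty(1+\gamma_k)=1<\infty$; $\sum_{k=1}^\infty Z_k=\sum_{k=1}^\infty z_k<\infty$ by assumption, so in particular this holds almost surely; and the recursion just derived is precisely $\E[\tilde Y_{k+1}\mid\mathcal F_k]\le(1+\gamma_k)\tilde Y_k-X_k+Z_k$. The theorem then gives that $(\tilde Y_k)_{k\in\N}$ converges almost surely, the additional conclusion $\sum_k X_k<\infty$ being vacuous here.

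Finally, since $Y_k=\tilde Y_k+c$ with $c$ constant, almost sure convergence of $(\tilde Y_k)_{k\in\N}$ transfers directly to $(Y_k)_{k\in\N}$, which completes the proof. There is no genuine obstacle in this argument; the only point requiring a moment's care is the reduction step -- one must ensure the shift is by a constant (or at least an a.s.-finite $\mathcal F_1$-measurable quantity) so that the conditional-expectation inequality is preserved under the shift and \Cref{lem:RS} applies verbatim.
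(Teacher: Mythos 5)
Your proposal is correct and follows essentially the same route as the paper: shift the sequence by the uniform lower bound to make it non-negative, then apply \Cref{lem:RS} with $\gamma_k\equiv 0$, $X_k\equiv 0$ and $Z_k=z_k$. No substantive differences.
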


\begin{proof}
    Let $C \ge 0$ be a constant such that for all $k \in \N$ one has $Y_k \ge -C$, almost surely. Set $(\tilde Y_k)_{k \in \N}=(Y_k +C)_{k \in \N}$ and note that \eqref{eq:RSeasy} still holds when replacing $(Y_k)_{k \in \N}$ by $(\tilde Y_k)_{k \in \N}$. Thus, the statement follows from \Cref{lem:RS} for the choice $\gamma_k \equiv 0$, $X_k \equiv 0$ and $(Z_k)_{k \in \N}=(z_k)_{k \in \N}$.
\end{proof}

\begin{corollary}
\label{cor:RS2}
    Let $(\mathcal F_k)_{k \in \N}$ be a filtration and $(Y_k)_{k \in \N}$, $(A_k)_{k \in \N}$, $(B_k)_{k \in \N}$ and $(C_k)_{k\in\N}$ be non-negative and adapted processes satisfying almost surely that
    \[
        \sum_{k = 1}^\infty A_k = \infty \quad \text{ , } \quad \sum_{k=1}^\infty B_k <\infty\quad \text{and}\quad \sum_{k=1}^\infty C_k<\infty \,.
    \]
    Moreover, suppose that for all $k \in \N$ one has almost surely that
    \[
        \E[Y_{k+1} \mid \cF_k] \le (1+C_k-A_k)Y_k+B_k.
    \]
    Then $Y_k \to 0$ holds almost surely as $k \to \infty$.
\end{corollary}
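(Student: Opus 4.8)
The plan is to derive the statement directly from the Robbins-Siegmund theorem recorded in \Cref{lem:RS}. First I would rewrite the hypothesis $\E[Y_{k+1}\mid\cF_k]\le (1-A_k)Y_k+B_k$ in the form
\[
\E[Y_{k+1}\mid\cF_k]\le (1+\gamma_k)Y_k - X_k + Z_k
\]
with the \emph{deterministic} choice $\gamma_k\equiv 0$ and $X_k:=A_kY_k$, $Z_k:=B_k$. Since $(A_k)$, $(Y_k)$, $(B_k)$ are non-negative and adapted, so are $(X_k)$ and $(Z_k)$; moreover $\prod_{k=1}^\infty(1+\gamma_k)=1<\infty$ and $\sum_{k=1}^\infty Z_k=\sum_{k=1}^\infty B_k<\infty$ almost surely by assumption. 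Hence \Cref{lem:RS} applies and gives, on one almost sure event: the sequence $(Y_k)_{k\in\N}$ converges to some non-negative random variable $Y_\infty$, and $\sum_{k=1}^\infty A_kY_k<\infty$.

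It then remains to identify the limit, and the claim is $Y_\infty=0$ almost surely. I would argue by contradiction on the event $\{Y_\infty>0\}$, intersected with the almost sure event on which both conclusions above and $\sum_{k=1}^\infty A_k=\infty$ hold. On that event $Y_k\to Y_\infty>0$, so there is a (random) index $N$ with $Y_k\ge Y_\infty/2$ for all $k\ge N$, whence
\[
\sum_{k=1}^\infty A_kY_k \ge \frac{Y_\infty}{2}\sum_{k=N}^\infty A_k = \infty,
\]
contradicting $\sum_{k=1}^\infty A_kY_k<\infty$. Therefore $\Prob(Y_\infty>0)=0$, i.e.\ $Y_k\to 0$ almost surely.

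The only point needing a bit of care is that the coefficient $1-A_k$ in the hypothesis may be negative, so one must not try to read the recursion as an almost-supermartingale with multiplicative factor $1-A_k$; instead the term $-A_kY_k$ is absorbed into the ``$-X_k$'' slot of \Cref{lem:RS} with $\gamma_k\equiv 0$, which is exactly what makes the non-negativity requirements on $X_k$ hold. Beyond that, the argument is essentially immediate, and I do not anticipate any genuine obstacle.
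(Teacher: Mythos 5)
Your proof is correct. The paper does not actually prove \Cref{cor:RS2} — it only cites Lemma A.2 of \cite{weissmann2025almost} — and your argument (absorb $-A_kY_k$ into the $-X_k$ slot of \Cref{lem:RS} with $\gamma_k\equiv 0$, then rule out a positive limit via $\sum_k A_k=\infty$ against the summability of $\sum_k A_kY_k$) is exactly the standard derivation that the cited lemma rests on, so there is nothing to correct or compare.
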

\begin{proof}
     The proof follows the same lines as the proof of Lemma A.2 in~\cite{weissmann2025almost}. For completeness, we provide the full details. Compared to \Cref{lem:RS}, we have $Y_k = Y_k$, $X_k = A_kY_k$, $Z_k = B_k$ and $\gamma_k = C_k$. Using \Cref{lem:RS} we obtain the existence of $Y_\infty$ which is almost surely finite, integrable and satisfies $Y_n\to Y_\infty$ almost surely. Additionally, we have that $\sum_{k=1}^\infty X_k = \sum A_k Y_k<\infty$ implying that $\liminf_{k\to\infty} Y_k = 0$, where we have used the assumption $\sum_{k=1}^\infty A_k = \infty$ almost surely. Since the limit inferior and limit coincide for converging sequences, the assertion follows by
     \[ Y_\infty = \lim_{k\to\infty} Y_k = \liminf_{k\to\infty} Y_k = 0\quad \text{almost surely}\,.\]
\end{proof}

\section{Finite-sum problems}
In this section, we prove \eqref{eq:finitesum} from \Cref{exa:finitesum} in the introduction. 
We consider the finite-sum optimization problem
\[
\min_{x \in \mathbb{R}^d} f(x)
= \frac{1}{N}\sum_{i=1}^N f_i(x),
\]
where, for all $i=1, \dots, N$, $ f_i : \mathbb{R}^d \to \mathbb{R} $ is convex and $L_i$-smooth.
The mini-batch estimator with mini-batch size $M \in \N$ is defined via
$
g_k = \frac{1}{M}\sum_{i \in M} \nabla f_{I_{i,k}}(X_{k-1}),
$
for all $k \in \N$,
where $(I_{i,k})_{i,k \in \N}$ is a family of iid. random variables that are uniformly distributed on $\{1, \dots, N\}$. The corresponding gradient noise is defined as
$
D_k = \frac{1}{M}\sum_{i \in M} (\nabla f_{I_{i,k}}(X_{k-1})- \nabla f(X_{k-1})).
$
We show that in the finite-sum situation the ABC-condition, \Cref{assu:ABC}, is satisfied. The following lemma is a version of~\cite[Lemma 4.20]{garrigos2024handbookconvergencetheoremsstochastic} with improved constants.

\begin{lemma} \label{lem:finitesum}
    The sequence $(D_k)_{k \in \N}$ satisfies for all $k \in \N$
    \begin{align*} 
\mathbb{E}\left[\|D_k\|^2 \mid \mathcal{F}_{k-1}\right]
\le
\frac{4 L}{M}\bigl(f(X_{k-1}) - f(x_\ast)\bigr)
+
\frac{2\sigma_\ast^2}{M},
\end{align*}
where $\bar L = \frac{1}{N}\sum_{i=1}^N L_i$ and
$\sigma_\ast^2 = \frac{1}{N}\sum_{i=1}^N \|\nabla f_i(x_\ast)\|^2$.
\end{lemma}

\begin{proof}
    Since, for all $i \in \{1,\dots, N\}$, $f_i$ is convex and $L_i$-smooth we get for all $x,y \in \R^d$ that
    $$
        f_i(x)-f_i(y) \le \langle \nabla f_i(y), x-y \rangle +\frac{L_i}{2} \|x-y\|^2.
    $$
    For fixed $y \in \R^d$ let 
    $$
        \varphi_i(x) = f_i(x)-f_i(y)-\langle \nabla f_i(y), x-y \rangle.
    $$
    Due to convexity of $f_i$, $\varphi_i$ is non-negative. Moreover, $\nabla \varphi_i(x) = \nabla f_i(x)-\nabla f_i(y)$ is $L_i$-Lipschitz. Thus, for $z=x-\frac{\nabla \varphi_i(x)}{L_i}$
    \begin{align*}
        0 & \le \varphi_i(z) = \varphi_i(x)-\langle \nabla \varphi_i(x) , \frac{\nabla \varphi_i(x)}{L_i} \rangle +\frac{L_i}{2} \|\frac{\nabla \varphi_i(x)}{L_i}\|^2 \\
        &= f_i(x)-f_i(y)-\langle \nabla f_i(y),x-y \rangle -\frac{1}{2L_i} \|\nabla f_i(x)-\nabla f_i(y)\|^2,
    \end{align*}
    which yields
    \begin{align} \label{eq:2349875623875}
        \|\nabla f_i(x)-\nabla f_i(y)\|^2 \le 2L_i (f_i(x)-f_i(y)-\langle \nabla f_i(y),x-y \rangle).
    \end{align}
    Thus,
    \begin{align*}
        \mathbb{E}\left[\|D_k\|^2 \mid \mathcal{F}_{k-1}\right] &= \frac{1}{NM} \sum_{i=1}^N \|\nabla f_i(X_{k-1})-\nabla f(X_{k-1})\|^2 \\
        &= \frac{1}{NM} \sum_{i=1}^N \|\nabla f_i(X_{k-1})-\nabla f_i(x_\ast)-\nabla f(X_{k-1})+\nabla f_i(x_\ast)\|^2 \\
        &= \frac{2}{M} \sigma_\ast^2 + \frac {2}{NM} \sum_{i=1}^N   \|\nabla f_i(X_{k-1})-\nabla f_i(x_\ast)-\nabla f(X_{k-1})\|^2
    \end{align*}
    Since $\frac 1N \sum_{i=1}^N \nabla f_i(X_{k-1})-\nabla f_i(x_\ast) = \nabla f(X_{k-1})$, we can use \eqref{eq:2349875623875} with $x=X_{k-1}$ and $y=x_\ast$ to get
    \begin{align*}
        \frac{1}{N} &\sum_{i=1}^N   \|\nabla f_i(X_{k-1})-\nabla f_i(x_\ast)-\nabla f(X_{k-1})\|^2 \le \frac{1}{N} \sum_{i=1}^N  \|\nabla f_i(X_{k-1})-\nabla f_i(x_\ast)\|^2 \\
        & \le 2 \bar L f(X_{k-1})-f(x_\ast) - \underbrace{ \frac{1}{N} \sum_{i=1}^N \langle \nabla f_i(x_\ast), X_{k-1}-x_\ast \rangle}_{\langle \nabla f(x_\ast), X_{k-1}-x_\ast \rangle =0}.
    \end{align*}
\end{proof}

\section{Proofs of the main results} \label[appendix]{app:proofs}
As a first step, we derive an iterative bound for the optimality gap of the regularized objective function
   \begin{align} \label{eq:energy}
   E_k := f_{\lambda_{k+1}}(X_k)-f_{\lambda_{k+1}}(x_{\lambda_{k+1}}),\quad k\in\N_0.
   \end{align}
Given \Cref{lem:attouch1app}, this process $(E_k)_{k \in \N_0}$ serves as Lyapunov function for computing the convergence rates stated in \Cref{sec:main}.

\begin{proposition} \label{prop:descent}
    Suppose that \Cref{ass:smoothconvex} and \Cref{assu:ABC} are fulfilled and and let $(X_k)_{k \in \N_0}$ be generated by \eqref{eq:regSGD} with predictable (random) step-sizes and regularization parameters that are uniformly bounded from above and such that $(\lambda_k)_{k \in \N}$ is almost surely decreasing. For $k \in \N$ denote by
    $
    \mathbb A_k = \{\alpha_{k}\le \frac{2}{L+\lambda_{k}}\} \in \mathcal F_{k-1}
    $. 
    Then, for all $k \in \N$,
    \begin{align*}
    \E[\1_{\mathbb A_k} E_{k}\mid \mathcal F_{k-1}] & \le\Big(1-2\lambda_{k}\alpha_{k}\Big(1-\frac{L+\lambda_k}2\alpha_{k}\Big)+\frac{L+\lambda_{k}}{2} \alpha_{k}^2 A\Big) \1_{\mathbb A_k}  E_{k-1} \\ 
    & \quad + \frac{\lambda_{k}-\lambda_{k+1}}{2}\|x_\ast\|_{\mathcal X}^2 
         +\frac{L+\lambda_{k}}{2} \alpha_{k}^2 \Big( A\frac {\lambda_{k}}2 \|x_\ast\|_{\mathcal X}^2 +C \Big).
    \end{align*}
\end{proposition}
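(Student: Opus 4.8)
The plan is to apply the descent lemma \eqref{eq:descent} to the $(L+\lambda_k)$-smooth regularized objective $f_{\lambda_k}$ along the step $X_k=X_{k-1}-\alpha_k(\nabla f_{\lambda_k}(X_{k-1})+D_k)$, take conditional expectation to eliminate the martingale noise, convert the gradient term into a contraction of the energy via strong convexity, and finally account for the index shift $\lambda_k\rightsquigarrow\lambda_{k+1}$ in the definition of $E_k$. Two bookkeeping observations make this run: first, $\1_{\mathbb A_k}$ is $\mathcal F_{k-1}$-measurable because $\alpha_k,\lambda_k$ are predictable, so it commutes with $\E[\,\cdot\mid\mathcal F_{k-1}]$; second, by definition $E_{k-1}=f_{\lambda_k}(X_{k-1})-f_{\lambda_k}(x_{\lambda_k})$, so the Lyapunov function is naturally expressed with $\lambda_k$ at time $k-1$.

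Concretely, \eqref{eq:descent} for $f_{\lambda_k}$ gives, after taking $\E[\,\cdot\mid\mathcal F_{k-1}]$ and using $\E[D_k\mid\mathcal F_{k-1}]=0$ together with $\E[\|\nabla f_{\lambda_k}(X_{k-1})+D_k\|_\cX^2\mid\mathcal F_{k-1}]=\|\nabla f_{\lambda_k}(X_{k-1})\|_\cX^2+\E[\|D_k\|_\cX^2\mid\mathcal F_{k-1}]$,
\[
\E[f_{\lambda_k}(X_k)\mid\mathcal F_{k-1}]\le f_{\lambda_k}(X_{k-1})-\alpha_k\Big(1-\tfrac{L+\lambda_k}{2}\alpha_k\Big)\|\nabla f_{\lambda_k}(X_{k-1})\|_\cX^2+\tfrac{L+\lambda_k}{2}\alpha_k^2\,\E[\|D_k\|_\cX^2\mid\mathcal F_{k-1}].
\]
On $\mathbb A_k$ the prefactor of the gradient term is nonnegative, so after multiplying by $\1_{\mathbb A_k}$ I lower bound $\|\nabla f_{\lambda_k}(X_{k-1})\|_\cX^2\ge 2\lambda_k\,E_{k-1}$ — the standard Polyak--\L ojasiewicz bound implied by $\lambda_k$-strong convexity of $f_{\lambda_k}$ — and for the noise I use \Cref{assu:ABC} combined with \Cref{lem:attouch1app}(i), i.e.\ $f(X_{k-1})-f(x_\ast)\le E_{k-1}+\tfrac{\lambda_k}{2}\|x_\ast\|_\cX^2$, to obtain $\E[\|D_k\|_\cX^2\mid\mathcal F_{k-1}]\le A\,E_{k-1}+A\tfrac{\lambda_k}{2}\|x_\ast\|_\cX^2+C$. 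Subtracting $f_{\lambda_k}(x_{\lambda_k})$ and collecting the $E_{k-1}$ coefficients yields the asserted bound but with $f_{\lambda_k}(X_k)-f_{\lambda_k}(x_{\lambda_k})$ on the left and without the $\tfrac{\lambda_k-\lambda_{k+1}}{2}\|x_\ast\|_\cX^2$ term.

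It remains to bridge from $f_{\lambda_k}(X_k)-f_{\lambda_k}(x_{\lambda_k})$ to $E_k=f_{\lambda_{k+1}}(X_k)-f_{\lambda_{k+1}}(x_{\lambda_{k+1}})$. Since $\lambda$ is a.s.\ decreasing, $f_{\lambda_{k+1}}(X_k)\le f_{\lambda_k}(X_k)$, and \Cref{lem:regfun_decrease}(ii) applied with $\lambda=\lambda_k\ge\lambda'=\lambda_{k+1}$ gives $f_{\lambda_k}(x_{\lambda_k})-f_{\lambda_{k+1}}(x_{\lambda_{k+1}})\le\tfrac{\lambda_k-\lambda_{k+1}}{2}\|x_\ast\|_\cX^2$; adding these shows $E_k\le f_{\lambda_k}(X_k)-f_{\lambda_k}(x_{\lambda_k})+\tfrac{\lambda_k-\lambda_{k+1}}{2}\|x_\ast\|_\cX^2$. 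Applying $\E[\1_{\mathbb A_k}\,\cdot\mid\mathcal F_{k-1}]$ to this inequality and inserting the previous display gives exactly the claimed estimate. None of the individual steps is hard; the one place to be careful is to keep $\1_{\mathbb A_k}$ throughout so that the sign of the gradient coefficient is under control, and to line up the index shift in the energy with the monotonicity of $\lambda$ so that the residual $\tfrac{\lambda_k-\lambda_{k+1}}{2}\|x_\ast\|_\cX^2$ comes out with exactly the stated constant — this bookkeeping, rather than any single inequality, is the main obstacle.
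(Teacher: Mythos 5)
Your proposal is correct and follows essentially the same route as the paper's proof: descent lemma for the $(L+\lambda_k)$-smooth function $f_{\lambda_k}$, the Polyak--\L ojasiewicz bound from $\lambda_k$-strong convexity (valid on $\mathbb A_k$ where the gradient coefficient is nonnegative), \Cref{assu:ABC} combined with \Cref{lem:attouch1app}(i) for the noise, and the index shift handled via $f_{\lambda_{k+1}}(X_k)\le f_{\lambda_k}(X_k)$ together with \Cref{lem:regfun_decrease}(ii). The only difference is the order in which the PL inequality and the noise bound are inserted, which is immaterial.
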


\begin{proof} 
    Using \Cref{assu:ABC}, the property $f\le f_{\lambda_{k}}$, and the descent condition \eqref{eq:descent} applied to the $(L+\lambda_{k})$-smooth function $f_{\lambda_{k}}$, yields, for $k \in \N$, 
    \begin{align*}
    \E[\1_{\mathbb A_k} f_{\lambda_{k}}(X_{k}) \mid\mathcal F_{k-1}] &\le  \1_{\mathbb A_k} \Big(f_{\lambda_{k}}(X_{k-1}) - \alpha_{k} \Big(1-\frac{L+\lambda_{k}}2 \alpha_{k}\Big) \|\nabla f_{\lambda_{k}}(X_{k-1})\|_\cX^2\\ &\quad+ \frac{L+\lambda_{k}}{2} \alpha_{k}^2\E[\|D_{k}\|_{\mathcal X}^2 \mid \mathcal F_{k-1}] \Big)\\
    & \le \1_{\mathbb A_k} \Big( f_{\lambda_{k}}(X_{k-1}) - \alpha_{k} \Big(1-\frac{L+\lambda_{k}}2 \alpha_{k}\Big) \|\nabla f_{\lambda_{k}}(X_{k-1})\|_\cX^2 \\
    & \quad + \frac{L+\lambda_{k}}{2} \alpha_{k}^2 \bigl(A (f(X_{k-1})-f(x_\ast))  +C \bigr) \Big) \\
    & \le \1_{\mathbb A_k} \Big(f_{\lambda_{k}}(X_{k-1}) - \alpha_{k}  \Big(1-\frac{L+\lambda_{k}}2  \alpha_{k}\Big) \|\nabla f_{\lambda_{k}}(X_{k-1})\|_\cX^2 \\
    & \quad + \frac{L+\lambda_{k}}{2} \alpha_{k}^2 \bigl(A (f_{\lambda_{k}}(X_{k-1})-f_{\lambda_{k}}(x_{\lambda_k})) + A\frac {\lambda_{k}}2 \|x_\ast\|_{\mathcal X}^2 +C \bigr) \Big), 
    \end{align*}
    where in the last step we also used \Cref{lem:regfun_decrease} (iii).
    Since each $f_{\lambda_{k}}$ is $\lambda_{k}$-strongly convex, it satisfies the Polyak-\L ojasiewicz inequality
    \begin{align} \label{eq:PL}
    f_{\lambda_{k}}(x)-f_{\lambda_{k}}(x_{\lambda_{k}}) \le \frac{1}{2\lambda_{k}} \|\nabla f_{\lambda_{k}}(x)\|_\cX^2, \quad x \in \mathcal X.
    \end{align}
    Thus,
        \begin{align*}
    \E[\1_{\mathbb A_k}  f_{\lambda_{k}}(X_{k})\mid\mathcal F_{k-1}] 
    & \le \1_{\mathbb A_k}  \Bigl(f_{\lambda_{k}}(X_{k-1}) - 2\alpha_{k} \lambda_{k} \Big(1-\frac{L+\lambda_{k}}2  \alpha_{k}\Big) (f_{\lambda_{k}}(X_{k-1})-f_{\lambda_{k}}(x_{\lambda_{k}})) \\
    & \quad +\frac{L+\lambda_{k}}{2}\alpha_{k}^2 \bigl(A (f_{\lambda_{k}}(X_{k-1})-f_{\lambda_{k}}(x_{\lambda_{k}})) + A\frac {\lambda_{k}}2 \|x_\ast\|_{\mathcal X}^2 +C \bigr)\Bigr).
    \end{align*}
    Next, we observe that
    \begin{align*} 
    f_{\lambda_{k+1}}(X_{k})-f_{\lambda_{k+1}}(x_{\lambda_{k+1}}) &= f_{\lambda_{k}}(X_{k})-f_{\lambda_{k}}(x_{\lambda_{k}})+f_{\lambda_{k+1}}(X_{k})-f_{\lambda_{k}}(X_{k}) \\
    & \quad + f_{\lambda_{k}}(x_{\lambda_{k}})-f_{\lambda_{k+1}}(x_{\lambda_{k+1}})\\ &\le f_{\lambda_{k}}(X_{k})-f_{\lambda_{k}}(x_{\lambda_{k}})+ f_{\lambda_{k}}(x_{\lambda_{k}})-f_{\lambda_{k+1}}(x_{\lambda_{k+1}})\,, 
    \end{align*}
    since $f_{\lambda_{k+1}}(X_{k})-f_{\lambda_{k}}(X_{k})\le 0$. Combining the previous computations and using \Cref{lem:regfun_decrease} (ii) yields
    \begin{align*}
        \E[\1_{\mathbb A_k}  E_{k}\mid \mathcal F_{k-1}] &\le \Bigl(1-2\lambda_{k}\alpha_{k}\Bigl(1-\frac{L+\lambda_{k}}2\alpha_{k}\Bigr)\Bigr) \1_{\mathbb A_k}   E_{k-1} + f_{\lambda_{k}}(x_{\lambda_{k}})-f_{\lambda_{k+1}}(x_{\lambda_{k+1}}) \\
        & \quad +\frac{L+\lambda_{k}}{2} \alpha_{k}^2 \bigl(\1_{\mathbb A_k}  A (f_{\lambda_{k}}(X_{k-1})-f_{\lambda_{k}}(x_{\lambda_{k}})) + A\frac {\lambda_{k}}2 \|x_\ast\|_{\mathcal X}^2 +C \bigr)\\
        & \le \Bigl(1-2\lambda_{k}\alpha_{k}\Bigl(1-\frac{L+\lambda_{k}}2\alpha_{k}\Bigr)\Bigr) \1_{\mathbb A_k}  E_{k-1} + \frac{\lambda_{k}-\lambda_{k+1}}{2}\|x_\ast\|_{\mathcal X}^2 \\
        & \quad +\frac{L+\lambda_{k}}{2} \alpha_{k}^2 \Bigl( \1_{\mathbb A_k} A E_{k-1} + A\frac {\lambda_{k}}2 \|x_\ast\|_{\mathcal X}^2 +C \Bigr).
    \end{align*}
\end{proof}
With the help of the energy function $(E_k)_{k \in \N_0}$, we can bound the optimality gap of the true objective function, as well as the distance to the unique minimizer of the regularized objective function. For this, we rephrase \Cref{lem:attouch1app} in the notation used in this section.

\begin{lemma} \label{lem:attouch1}
    Suppose that \Cref{ass:smoothconvex} is fulfilled and let $(X_k)_{k\in\N_0}$ be generated by \eqref{eq:regSGD}. Then the following estimates are satisfied for all $k \in \N$: 
    \begin{enumerate}
        \item[(i)] $f(X_k)-f(x_\ast) \le E_k + \frac{\lambda_{k+1}}{2} \|x_\ast\|_{\mathcal X}^2$,
        \item[(ii)] $\|X_k-x_{\lambda_{k+1}}\|_{\mathcal X}^2 \le \frac{2E_k}{\lambda_{k+1}}$.
    \end{enumerate}
\end{lemma}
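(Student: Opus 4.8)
The plan is to obtain both estimates as the specialization of \Cref{lem:attouch1app} to the point $x=X_k$ and the regularization level $\lambda=\lambda_{k+1}$, since with this choice the regularized gap $f_{\lambda_{k+1}}(X_k)-f_{\lambda_{k+1}}(x_{\lambda_{k+1}})$ appearing in \Cref{lem:attouch1app} is, by definition, exactly the energy $E_k$. The only genuine work beyond quoting \Cref{lem:attouch1app} is to keep track of the indices that $E_k$ carries.

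For part (i) I would apply \Cref{lem:attouch1app}(i) at $\lambda=\lambda_{k+1}$ and $x=X_k$, which reads
\[
f(X_k)-f(x_\ast)\le f_{\lambda_{k+1}}(X_k)-f_{\lambda_{k+1}}(x_{\lambda_{k+1}})+\frac{\lambda_{k+1}}{2}\|x_\ast\|_{\cX}^2 .
\]
Substituting $E_k=f_{\lambda_{k+1}}(X_k)-f_{\lambda_{k+1}}(x_{\lambda_{k+1}})$ yields the claim directly. This step is routine and requires no further estimates.

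For part (ii) the engine is the $\lambda_{k+1}$-strong convexity of $f_{\lambda_{k+1}}$, which gives, for every $y\in\cX$,
\[
\frac{\lambda_{k+1}}{2}\|X_k-y\|_{\cX}^2\le f_{\lambda_{k+1}}(X_k)-f_{\lambda_{k+1}}(y)-\langle\nabla f_{\lambda_{k+1}}(y),X_k-y\rangle_{\cX}.
\]
To aim directly at the stated center $y=x_{\lambda_k}$, I would evaluate the gradient term through the optimality condition $\nabla f(x_{\lambda_k})=-\lambda_k x_{\lambda_k}$ (equivalently $\nabla f_{\lambda_k}(x_{\lambda_k})=0$), giving the identity $\nabla f_{\lambda_{k+1}}(x_{\lambda_k})=(\lambda_{k+1}-\lambda_k)x_{\lambda_k}$, and bound $f_{\lambda_{k+1}}(X_k)-f_{\lambda_{k+1}}(x_{\lambda_k})\le E_k$ using $f_{\lambda_{k+1}}(x_{\lambda_k})\ge f_{\lambda_{k+1}}(x_{\lambda_{k+1}})$. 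Together these reduce the estimate to
\[
E_k\ge\frac{\lambda_{k+1}}{2}\|X_k-x_{\lambda_k}\|_{\cX}^2+(\lambda_{k+1}-\lambda_k)\langle x_{\lambda_k},X_k-x_{\lambda_k}\rangle_{\cX}.
\]

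The main obstacle is precisely this first-order cross term. Because $(\lambda_k)$ is decreasing the prefactor $\lambda_{k+1}-\lambda_k$ is non-positive, yet the inner product $\langle x_{\lambda_k},X_k-x_{\lambda_k}\rangle_{\cX}$ carries no definite sign, so the term cannot be dropped and the $x_{\lambda_k}$-centered version does not close by itself. The way I would resolve this is to center the strong-convexity inequality at the exact minimizer of $f_{\lambda_{k+1}}$: taking $y=x_{\lambda_{k+1}}$ makes $\nabla f_{\lambda_{k+1}}(x_{\lambda_{k+1}})=0$, the cross term vanishes, and one obtains $\frac{\lambda_{k+1}}{2}\|X_k-x_{\lambda_{k+1}}\|_{\cX}^2\le E_k$, i.e. $\|X_k-x_{\lambda_{k+1}}\|_{\cX}^2\le 2E_k/\lambda_{k+1}$. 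This is the estimate through which $E_k/\lambda_{k+1}$ controls the distance to the regularized minimizer entering $E_k$, and it is the quantity $\|X_k-x_{\lambda_{k+1}}\|_{\cX}$ that is subsequently propagated in \Cref{thm:L2}(ii) and \Cref{thm:almostsure}(ii).
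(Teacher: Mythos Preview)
Your approach is exactly the paper's: the lemma is nothing but \Cref{lem:attouch1app} specialized to $x=X_k$ and $\lambda=\lambda_{k+1}$, and the paper says so explicitly (``we rephrase \Cref{lem:attouch1app} in the notation used in this section''). You have also correctly diagnosed the index in part~(ii): the direct instantiation of \Cref{lem:attouch1app}(ii) yields the center $x_{\lambda_{k+1}}$, not $x_{\lambda_k}$, and indeed every downstream use in the paper (\Cref{thm:detapp}(iii), \Cref{thm:L2}(ii), \Cref{thm:almostsure}(ii), \Cref{thm:strongconv_L2}, \Cref{thm:strongconv_as}) is stated for $\|X_k-x_{\lambda_{k+1}}\|_{\cX}$, so the $x_{\lambda_k}$ in the lemma statement is a typographical slip rather than a separate claim requiring the cross-term analysis you explored.
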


\subsection{General convergence result}
First, we prove the general convergence results. We emphasize that no boundedness assumption is imposed on the reg-SGD scheme. In fact, in the proof of \Cref{thm:convgenAS} we will show that Assumptions~\ref{ass:smoothconvex}-\ref{assu:ABC} together with \eqref{eq:RMapp} imply that $\sup_{k \in \N_0} \|X_k\|_\cX^2<\infty$ almost surely.
The proof uses ideas from Theorem~4.1 in~\cite{maulen2024tikhonov} and Lemma~3.1 in~\cite{dereich2021convergence}. Due to the discretization error, we introduce and analyze a combined Lyapunov function $(\varphi_k+E_k)_{k \in \N}$, where $\varphi_k = \|X_k-x_\ast\|_\cX^2$ and $E_k$ is defined in \eqref{eq:energy}, in order to prove a descent step.

\subsubsection{Proof of \Cref{thm:convgenAS}} Let us recall the statements. We note that using a stopping time argument one can lift the boundedness assumption on the step-sizes and regularization parameters. However, proving this generalization requires a lot of heavy notation and technical arguments.

\begin{theorem}[Almost sure convergence] \label{thm:convgenASapp}
    Suppose that \Cref{ass:smoothconvex} and \Cref{assu:ABC} are fulfilled and let $(X_k)_{k \in \N_0}$ be generated by \eqref{eq:regSGD} with predictable (random) step-sizes and regularization parameters that are uniformly bounded from above. Moreover, we assume that almost surely $(\lambda_k)_{k \in \N}$ is decreasing to $0$ and
    \begin{align} \label{eq:RMapp}
        \sum_{k \in \N} \alpha_k \lambda_k = \infty \quad , \quad \sum_{k \in \N}\alpha_k^2 < \infty  \quad \text{ and }  \quad \sum_{k \in \N} \alpha_k\lambda_k \big(\|x_\ast\|^2_\cX-\|x_{\lambda_k}\|_{\cX}^2\big)<\infty.
    \end{align}
    Then $\lim_{k\to\infty}X_k=x_\ast$ almost surely.
\end{theorem}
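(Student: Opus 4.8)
The plan is to combine the descent estimate for the energy function $E_k$ from \Cref{prop:descent} with a descent estimate for $\varphi_k := \|X_k - x_\ast\|_\cX^2$, and then apply the Robbins–Siegmund theorem (\Cref{lem:RS}) to the combined Lyapunov function $\varphi_k + E_k$ (or a suitable scalar multiple of $E_k$). First I would establish the recursion for $\varphi_k$: expanding $\|X_k - x_\ast\|_\cX^2 = \|X_{k-1} - \alpha_k(\nabla f(X_{k-1}) + \lambda_k X_{k-1} + D_k) - x_\ast\|_\cX^2$, taking conditional expectation (so the cross term with $D_k$ vanishes by the martingale-difference property), and using \Cref{assu:ABC} to control $\E[\|D_k\|_\cX^2 \mid \mathcal F_{k-1}]$, I get a bound of the form $\E[\varphi_k \mid \mathcal F_{k-1}] \le \varphi_{k-1} - 2\alpha_k \langle \nabla f(X_{k-1}) + \lambda_k X_{k-1}, X_{k-1} - x_\ast\rangle_\cX + \alpha_k^2 (\cdots)$. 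The inner product is the crucial term: by convexity of $f$ one has $\langle \nabla f(X_{k-1}), X_{k-1} - x_\ast\rangle_\cX \ge f(X_{k-1}) - f(x_\ast) \ge 0$, and for the Tikhonov part I would use $\langle \lambda_k X_{k-1}, X_{k-1} - x_\ast\rangle_\cX = \lambda_k \|X_{k-1}\|_\cX^2 - \lambda_k\langle X_{k-1}, x_\ast\rangle_\cX \ge \tfrac{\lambda_k}{2}\|X_{k-1}\|_\cX^2 - \tfrac{\lambda_k}{2}\|x_\ast\|_\cX^2$, which produces a genuine contraction $-\lambda_k\alpha_k\|X_{k-1}\|_\cX^2$ (equivalently a factor $(1 - \lambda_k\alpha_k)$ after relating $\|X_{k-1}\|_\cX^2$ to $\varphi_{k-1}$ up to lower-order terms) together with an error term $\lambda_k\alpha_k\|x_\ast\|_\cX^2$. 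The $\alpha_k^2$ terms, involving $A(f(X_{k-1}) - f(x_\ast)) + C$, must be absorbed; the $C$ part is summable since $\sum \alpha_k^2 < \infty$, while the $A(f(X_{k-1}) - f(x_\ast))$ part is not a priori summable and is exactly why we cannot work with $\varphi_k$ alone.

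This is where the combined Lyapunov function enters. By \Cref{lem:attouch1} (i), $f(X_{k-1}) - f(x_\ast) \le E_{k-1} + \tfrac{\lambda_k}{2}\|x_\ast\|_\cX^2$, so the troublesome $\alpha_k^2 A (f(X_{k-1})-f(x_\ast))$ term in the $\varphi$-recursion is dominated by $\alpha_k^2 A E_{k-1}$ plus a summable remainder. Meanwhile \Cref{prop:descent} gives $\E[E_k \mid \mathcal F_{k-1}] \le (1 - 2\lambda_k\alpha_k(1 - \tfrac{L+\lambda_k}{2}\alpha_k) + \tfrac{L+\lambda_k}{2}\alpha_k^2 A) E_{k-1} + (\text{summable})$, where the indicator $\1_{\mathbb A_k}$ may be dropped eventually because $\alpha_k \le \tfrac{2}{L+\lambda_k}$ holds for all large $k$ (as $\alpha_k \to 0$, $\lambda_k$ bounded), so $\1_{\mathbb A_k} = 1$ on a tail. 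For the finitely many small $k$ where it may fail, one handles them separately since $E_0$ has finite expectation. Adding the two recursions, for a fixed tail index $k_0$ beyond which $\alpha_k$ is small enough that $\lambda_k\alpha_k(1 - \tfrac{L+\lambda_k}{2}\alpha_k) - \tfrac{L+\lambda_k}{2}\alpha_k^2 A \ge \tfrac{1}{2}\lambda_k\alpha_k$ (say), I obtain
\[
\E[\varphi_k + E_k \mid \mathcal F_{k-1}] \le (1 + \gamma_k)(\varphi_{k-1} + E_{k-1}) - X_k + Z_k,
\]
with $\gamma_k = O(\alpha_k^2)$ so $\prod(1 + \gamma_k) < \infty$; with $X_k \ge c\,\lambda_k\alpha_k (f(X_{k-1}) - f(x_\ast)) + c\,\lambda_k\alpha_k E_{k-1}$ nonnegative; and with $Z_k = O(\alpha_k^2) + O((\lambda_k - \lambda_{k+1})) + O(\alpha_k\lambda_k(\|x_\ast\|_\cX^2 - \|x_{\lambda_k}\|_\cX^2))$. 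Here I must be a little careful: the naive error term from the $\varphi$-recursion is $\alpha_k\lambda_k\|x_\ast\|_\cX^2$, which is \emph{not} summable under \eqref{eq:RMapp} since $\sum\alpha_k\lambda_k = \infty$. The resolution — and this is the heart of the argument, mirroring the continuous-time analysis of \cite{maulen2024tikhonov,attouch1996viscosity} — is to use the defining optimality of $x_{\lambda_k}$: replacing $x_\ast$ by $x_{\lambda_k}$ in the contraction step, the Tikhonov inner product is handled via $\langle \lambda_k X_{k-1}, X_{k-1} - x_{\lambda_k}\rangle_\cX$ combined with $\nabla f_{\lambda_k}(x_{\lambda_k}) = 0$, producing an error governed by $\lambda_k\alpha_k(\|x_\ast\|_\cX^2 - \|x_{\lambda_k}\|_\cX^2)$ — precisely the quantity assumed summable in the third condition of \eqref{eq:RMapp} — plus a term comparing $\|X_{k-1} - x_\ast\|_\cX^2$ with $\|X_{k-1} - x_{\lambda_k}\|_\cX^2$, controlled by $\|x_{\lambda_k} - x_\ast\|_\cX \to 0$ and \Cref{lem:attouch1}(ii).

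From Robbins–Siegmund I conclude: (a) $\varphi_k + E_k$ converges almost surely to a finite limit, hence $\sup_k \|X_k\|_\cX^2 < \infty$ almost surely — this is the claimed automatic boundedness; and (b) $\sum_k X_k < \infty$ almost surely, which gives $\sum_k \lambda_k\alpha_k E_{k-1} < \infty$ and $\sum_k \lambda_k\alpha_k(f(X_{k-1}) - f(x_\ast)) < \infty$. Since $\sum_k \lambda_k\alpha_k = \infty$, the latter forces $\liminf_k E_k = 0$ along a subsequence; combined with the almost-sure convergence of $E_k$ (which follows since $\varphi_k + E_k$ and $\varphi_k$ — the latter by a separate application of \Cref{cor:RSeasy} to the $\varphi$-recursion alone, now that boundedness is known — both converge), we get $E_k \to 0$ almost surely. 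Then \Cref{lem:attouch1}(ii) gives $\|X_k - x_{\lambda_k}\|_\cX^2 \le 2E_k/\lambda_{k+1} \to 0$ — wait, this last step is delicate because $\lambda_{k+1} \to 0$, so I actually need the stronger conclusion that $E_k/\lambda_{k+1} \to 0$, not just $E_k \to 0$. This is the main obstacle of the proof: one must extract from $\sum_k \lambda_k\alpha_k E_{k-1} < \infty$ and a complementary argument (using that $E_k$ cannot oscillate too wildly because of the descent recursion, so that $\lambda_k E_k$ summable against $\alpha_k$ together with near-monotone behavior of $E_k/\lambda_k$ up to the error terms forces $E_k/\lambda_k \to 0$) the convergence $\|X_k - x_{\lambda_k}\|_\cX \to 0$. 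Granting this, the triangle inequality $\|X_k - x_\ast\|_\cX \le \|X_k - x_{\lambda_k}\|_\cX + \|x_{\lambda_k} - x_\ast\|_\cX$ together with $\lim_{\lambda\to 0}\|x_\lambda - x_\ast\|_\cX = 0$ (from \cite{attouch1996viscosity}) yields $X_k \to x_\ast$ almost surely, completing the proof. The step I expect to require the most care is precisely this passage from $E_k \to 0$ (or from the summability $\sum \lambda_k\alpha_k E_k < \infty$) to $E_k/\lambda_k \to 0$ — i.e., ensuring the energy decays strictly faster than the regularization parameter, which is where the balance conditions on $\alpha$ and $\lambda$ in \eqref{eq:RMapp} are genuinely used.
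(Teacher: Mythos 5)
Your architecture is the paper's: the combined Lyapunov function $\varphi_k+E_k$, the use of strong convexity of $f_{\lambda_k}$ and optimality of $x_{\lambda_k}$ to turn the non-summable error $\alpha_k\lambda_k\|x_\ast\|_\cX^2$ into the summable $\alpha_k\lambda_k(\|x_\ast\|_\cX^2-\|x_{\lambda_k}\|_\cX^2)$, the absorption of the $\alpha_k^2 A(f(X_{k-1})-f(x_\ast))$ noise term into $E_{k-1}$, and a Robbins--Siegmund conclusion. The gap is entirely in your last step. You apply the weak form of Robbins--Siegmund (convergence of $\varphi_k+E_k$ to a finite limit plus $\sum_k X_k<\infty$), which only delivers $E_k\to0$, and you are then forced onto the route $\|X_k-x_{\lambda_k}\|_\cX^2\le 2E_k/\lambda_{k+1}$, which requires $E_k/\lambda_{k+1}\to0$. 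You correctly flag that this is the obstacle, but the ``complementary argument'' you sketch (near-monotonicity of $E_k/\lambda_k$ forcing convergence) is not carried out and is genuinely delicate; as written the proof does not close. Moreover, your summable quantity $\sum_k\lambda_k\alpha_k\big(f(X_{k-1})-f(x_\ast)\big)<\infty$ combined with $\sum_k\lambda_k\alpha_k=\infty$ only gives $\liminf_k\big(f(X_k)-f(x_\ast)\big)=0$, which identifies the limit of $f$ along the iterates but says nothing about selecting the \emph{minimum-norm} minimizer; the whole point of the theorem is lost if one only controls the optimality gap.

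The repair is short and stays inside your own framework: keep the contraction of $\varphi_{k-1}$ itself inside the negative term. The strong-convexity inequality $\langle\nabla f_{\lambda_k}(X_{k-1}),X_{k-1}-x_\ast\rangle_\cX\ge\frac{\lambda_k}{2}(\|x_{\lambda_k}\|_\cX^2-\|x_\ast\|_\cX^2)+\frac{\lambda_k}{2}\|X_{k-1}-x_\ast\|_\cX^2$ gives the $\varphi$-recursion the multiplicative factor $(1-\alpha_k\lambda_k)$, and \Cref{prop:descent} gives $E_{k-1}$ a factor of the same order, so the combined recursion reads
\begin{align*}
\E[\varphi_k+E_k\mid\cF_{k-1}]\le(1-c\,\alpha_k\lambda_k)(\varphi_{k-1}+E_{k-1})+B_k
\end{align*}
for all large $k$, with $B_k$ summable under \eqref{eq:RMapp} (it collects $\alpha_k\lambda_k(\|x_\ast\|_\cX^2-\|x_{\lambda_k}\|_\cX^2)$, the telescoping $\lambda_k-\lambda_{k+1}$, and $O(\alpha_k^2)$). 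Since $\sum_k\alpha_k\lambda_k=\infty$, \Cref{cor:RS2} yields $\varphi_k+E_k\to0$ almost surely, hence $\varphi_k\to0$, i.e.\ $X_k\to x_\ast$ directly --- no passage through $E_k/\lambda_k$ and no appeal to $\|x_\lambda-x_\ast\|_\cX\to0$ is needed. (The remaining loose end in your write-up, dropping the indicator $\1_{\mathbb A_k}$ ``for large $k$'', needs the localization by the events $\mathbb B_k(N)$ and a union over $N$ when the step-sizes are random, as in the paper, but this is routine.)
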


\begin{proof}
    For $k \in \N_0$ let $\varphi_k= \|X_k-x_\ast\|_\cX^2$. Then, for all $k \in \N$,
\begin{align}\begin{split} \label{eq:2397634}
    \E[\varphi_k \mid \cF_{k-1}] &\le \varphi_{k-1} - 2\alpha_k \langle \nabla f_{\lambda_k}(X_{k-1}),X_{k-1}-x_\ast  \rangle_\cX +\alpha_k^2 \|\nabla f_{\lambda_k}(X_{k-1})\|_\cX^2 \\
    & \quad +\alpha_k^2 \Big(A (f_{\lambda_{k}}(X_{k-1})-f_{\lambda_{k}}(x_{\lambda_k})) + A\frac {\lambda_{k}}2 \|x_\ast\|_{\mathcal X}^2 +C \Big), 
    \end{split}
\end{align}
where we used \Cref{assu:ABC} and \Cref{lem:regfun_decrease} (iii).
Strong convexity of $f_{\lambda_k}$ yields
\begin{align*}
\begin{split} 
    f_{\lambda_k}(x_\ast) &\ge f_{\lambda_k}(X_{k-1})+\langle \nabla f_{\lambda_k} (X_{k-1}), x_\ast - X_{k-1} \rangle_\cX +\frac{\lambda_k}{2} \|X_{k-1}-x_\ast\|_{\cX}^2 \\
    &\ge f_{\lambda_k}(x_{\lambda_k}))+\langle \nabla f_{\lambda_k} (X_{k-1}), x_\ast - X_{k-1} \rangle_\cX +\frac{\lambda_k}{2} \|X_{k-1}-x_\ast\|_{\cX}^2.
    \end{split}
\end{align*}
Since, $f(x_\ast) \le f(x_{\lambda_k})$ this implies
\begin{align*}
    \frac{\lambda_k}{2} \|x_\ast\|_\cX^2 \ge \frac{\lambda_k}{2} \|x_{\lambda_k}\|_\cX^2+\langle \nabla f_{\lambda_k} (X_{k-1}), x_\ast - X_{k-1} \rangle_\cX +\frac{\lambda_k}{2} \|X_{k-1}-x_\ast\|_{\cX}^2,
\end{align*}
so that
\begin{align} \label{eq:2176235}
    \langle \nabla f_{\lambda_k} (X_{k-1}), X_{k-1} - x_\ast \rangle_\cX\ge   \frac{\lambda_k}{2} (\|x_{\lambda_k}\|_\cX^2-\|x_\ast\|_\cX^2)+\frac{\lambda_k}{2} \|X_{k-1}-x_\ast\|_{\cX}^2.
\end{align}
Combining \eqref{eq:2397634} and \eqref{eq:2176235} gives
\begin{align*}
    \E[\varphi_k \mid \cF_{k-1}] &\le (1-\alpha_k\lambda_k) \varphi_{k-1} + \alpha_k \lambda_k (\|x_\ast\|_\cX^2-\|x_{\lambda_k}\|_\cX^2) +\alpha_k^2 \|\nabla f_{\lambda_k}(X_{k-1})\|_\cX^2 \\
    & \quad +\alpha_k^2 \bigl(A (f_{\lambda_{k}}(X_{k-1})-f_{\lambda_{k}}(x_{\lambda_k})) + A\frac {\lambda_{k}}2 \|x_\ast\|_{\mathcal X}^2 +C \bigr)\\
    &\le (1-\alpha_k\lambda_k) \varphi_{k-1} + \alpha_k \lambda_k (\|x_\ast\|_\cX^2-\|x_{\lambda_k}\|_\cX^2) \\
    & \quad +\alpha_k^2 \bigl((A+2L+2\lambda_k) (f_{\lambda_{k}}(X_{k-1})-f_{\lambda_{k}}(x_{\lambda_k})) + A\frac {\lambda_{k}}2 \|x_\ast\|_{\mathcal X}^2 +C \bigr),
\end{align*}
where in the last step we used that analogously to \eqref{eq:009} one has
\[
    \|\nabla f_{\lambda_k}(x)\|_\cX^2 \le 2 (L+\lambda_k) (f_{\lambda_k}(x)-f(x_{\lambda_k})) \quad \text{ for all } x \in \cX.
\]
Now, recall that \Cref{prop:descent} gives that for all $k \in \N$
    \begin{align*}
    \E[\1_{\mathbb A_k} E_{k}\mid \mathcal F_{k-1}] & \le  \Big(1-2\lambda_{k}\alpha_{k}\Bigl(1-\frac{L+\lambda_k}2\alpha_{k}\Bigr)+\frac{L+\lambda_{k}}{2} \alpha_{k}^2 A\Big) \1_{\mathbb A_k} E_{k-1} \\ 
    & \quad + \frac{\lambda_{k}-\lambda_{k+1}}{2}\|x_\ast\|_{\mathcal X}^2 
         +\frac{L+\lambda_{k}}{2} \alpha_{k}^2 \Big( A\frac {\lambda_{k}}2 \|x_\ast\|_{\mathcal X}^2 +C \Big),
    \end{align*}
    where $E_k = f_{\lambda_{k+1}}(X_k)-f_{\lambda_{k+1}}(x_{\lambda_{k+1}})$.
    Fix $N \in \N$ and for $k \ge N$ denote $\mathbb B_k(N)= \{\alpha_i \le \frac{1}{L+\lambda_i}: i=N, \dots, k\}$.
    Then, for all $ k > N$
    \begin{align}
    \begin{split} \label{eq:lyageneral}
        \E[\1_{\mathbb B_k(N)}(\varphi_k + E_k) \mid \cF_{k-1}] & \le \tau_k \1_{\mathbb B_{k-1}(N)}(\varphi_{k-1}+E_{k-1}) + \alpha_k\lambda_k (\|x_\ast\|_\cX^2-\|x_{\lambda_k}\|_\cX^2) \\
        & + \frac{\lambda_{k}-\lambda_{k+1}}{2}\|x_\ast\|_{\mathcal X}^2  + \alpha_k^2\Bigl(A\frac {\lambda_{k}}2 \|x_\ast\|_{\mathcal X}^2 +C \Bigr)\Bigl( 1+ \frac{L+\lambda_k}{2} \Bigr),
        \end{split}
    \end{align}
    where
    \[
        \tau_k =  \max\Big( 1-\alpha_k\lambda_k, 1-2\lambda_{k}\alpha_{k}\Big(1-\frac{L+\lambda_k}2\alpha_{k}\Big)+\Bigl(\frac{L+\lambda_{k}}{2} A +A+2L+2\lambda_k\Bigr) \alpha_{k}^2  \Big)
    \]
    and we have used that $\varphi_{k-1}+E_{k-1} \ge 0$ and $\mathbb B_{k-1}(N) \supset \mathbb B_k(N)$.
    On the event $\mathbb B_{k-1}(N)$ we have
    \begin{align} \label{eq:3080258393}
    \tau_k \le 1-\underbrace{\alpha_k\lambda_k}_{=: A_k} + \underbrace{\Bigl(\frac{L+\lambda_{k}}{2} A +A+2L+2\lambda_k\Bigr) \alpha_{k}^2}_{=:C_k} \,,
    \end{align}
    where by assumption $\sum_{k\in\N} C_k <\infty$ and $\sum_{k\in\N} A_k = \infty$ almost surely.
    Now, we can apply \Cref{cor:RS2} for the process $(\1_{\mathbb B_k(N)} (\varphi_k+E_k))_{k \ge N}$ to deduce that, on $\mathbb B_\infty(N) = \bigcap_{k \ge N} \mathbb B_k(N)$, one has $\varphi_k \to 0$ almost surely as $k \to \infty$.
    Since $\alpha_k \to 0$ almost surely one has
    \[
        \mathbb P \Big( \bigcup_{N \in \N} \mathbb B_\infty(N) \Big)=1
    \]
    and, thus, the proof of the theorem is finished.
\end{proof}

\subsubsection{Proof of \Cref{thm:convgenL2}} We again reformulate the statement and provide the full proof of the general $L^2$-convergence.
\begin{theorem}[$L^2$-convergence] \label{thm:convgenL2app}
    Suppose that \Cref{ass:smoothconvex} and \Cref{assu:ABC} are fulfilled and let $(X_k)_{k \in \N_0}$ be generated by \eqref{eq:regSGD} with deterministic step-sizes and deterministic and decreasing regularization parameters $(\lambda_k)_{k \in \N}$. Moreover, assume that $\lambda_k \to 0$ and \eqref{eq:RMapp}, or, alternatively,
    \begin{align} \label{eq:2937293647822app}
        \sum_{k \in \N} \alpha_k \lambda_k = \infty \quad , \quad \alpha_k = o(\lambda_k) \quad \text{ and } \quad \lambda_k-\lambda_{k-1}=o(\alpha_k \lambda_k).
    \end{align}
    Then $\lim_{k\to\infty} \E[\|X_k-x_\ast\|_\cX^2]=0$.
   \end{theorem}

\begin{proof}
    First, we prove the theorem assuming that \eqref{eq:RMapp} holds.
    By assumption, one has 
    $\sum_{k\in\N} A_k = \infty$, $\sum_{k\in\N} C_k <\infty$, $\sum_{k \in \N} \alpha_k^2<\infty$, $\sum_{k \in \N} \alpha_k\lambda_k (\|x_\ast\|_\cX^2-\|x_{\lambda_k}\|_\cX^2)<\infty$ and $\sum_{k = 1 }^\infty(\lambda_k-\lambda_{k+1}) = \lambda_1 < \infty$, where $(A_k)$ and $(C_k)$ are defined in \eqref{eq:3080258393}. Therefore, after taking expectations in \eqref{eq:lyageneral}, we can apply \Cref{cor:RS2} for the deterministic process $(Y_k)_{k \in \N} =(\E[\varphi_k+E_k])_{k \in \N}$ to deduce that $\E[\varphi_k]\to 0$ and $\E[E_k] \to 0$.

    Let us now prove the statement under \eqref{eq:2937293647822app}. Combining \eqref{eq:lyageneral} with the fact that $\alpha_k \to 0$, there exist $C_1 >0$ and $N \in \N$ such that for all $k \ge N$ one has
    \begin{align*}
        \E[\varphi_k+E_k \mid \mathcal F_{k-1}] &\le (1-C_1\alpha_k\lambda_k) (\varphi_{k-1}+E_{k-1}) + \alpha_k \lambda_k (\|x_\ast\|_\cX^2-\|x_{\lambda_k}\|_\cX^2) \\
        & \quad + \frac{\lambda_{k}-\lambda_{k+1}}{2}\|x_\ast\|_{\mathcal X}^2  + \alpha_k^2\Big(A\frac {\lambda_{k}}2 \|x_\ast\|_{\mathcal X}^2 +C \Big)\Big( 1+ \frac{L+\lambda_k}{2} \Big).
    \end{align*}
    Moreover, using that $\alpha_k^2 = o(\alpha_k \lambda_k)$, $\lambda_k-\lambda_{k+1}=o(\alpha_k\lambda_k)$ and $\|x_{\lambda_k}\|_\cX \to \|x_\ast\|_\cX$, for all $\eps >0$ there exists an $N \in \N$ such that for all $k \ge N$
    \begin{align} \label{eq:6453}
        \E[\varphi_k+E_k \mid \mathcal F_{k-1}] &\le (1-C_1\alpha_k\lambda_k) (\varphi_{k-1}+E_{k-1}) + \eps \alpha_k\lambda_k.
    \end{align}
    Rewriting \eqref{eq:6453} gives
    \[
        \E\Big[\varphi_k+E_k - \frac{\eps}{C_1} \Big| \mathcal F_{k-1}\Big] \le (1-C_1\alpha_k\lambda_k) \Big(\varphi_{k-1}+E_{k-1}-\frac{\eps}{C_1}\Big),
    \]
    so that, taking expectation and using $\sum_{k \in \N}\alpha_k \lambda_k =\infty$, we get
    \[
        \limsup_{k \to \infty} \E[\varphi_k + E_k]-\frac{\eps}{C_1} \le 0.
    \]
    The statement now follows from $\eps \to 0$.
\end{proof}

\subsection{Deterministic case: Convergence rate for reg-GD}\label[appendix]{app:deterministic}
Before discussing the convergence rates for reg-GD, we want to relate our analysis to the literature in convex optimization. For this purpose we formulate our task of finding the minimum-norm solution as constrained optimization problem in form of
\[ \min_{x\in\mathcal X} \frac12\|x\|_{\mathcal X}^2 \quad \text{s.t.}\quad x\in C:=\arg\min_{y\in\mathcal X} f(y)\,. \]
This naturally relates to the task of solving general variational inclusions of form
\[0 \in A(x) + N_C(x) \]
where $A$ denotes a (maximal) monotone operator and $N_C(x) = \{ v\in \mathcal X: \langle v,w-x\rangle\le 0\quad \forall w\in C\}$ is the normal cone of a closed convex set $C$ at $x$. In our setting the operator $A(x)=\nabla_z \frac12\|z\|_{\mathcal X}^2\mid_{z=x} = x$ is strongly monotone. Another important class of problems studied in this context are hierarchical optimization problems of finding points in the set
\[ S = \argmin \{ g(x)\mid x\in\argmin f(x)\}\]
for two convex functions $g$ and $f$. This relates to our setting by choosing $g(\cdot) = \|\cdot\|_{\mathcal X}^2$.

To solve these types of problems, one popular approach includes penalty based methods which are described as differential inclusion
\begin{equation}\label{eq:diffinclusion}
    \dot x(t)+A(x(t))+\beta(t)\partial f(x(t))\ni0
\end{equation}
where the penalty parameter $\beta(t)$ tends to infinity. As demonstrated in \cite{ATTOUCH20101315}, when the monotone operator is a sub-differential $A = \partial g$, then we may equivalently consider the differential inclusion 
\[
    \dot x(t)+\lambda(t)\partial g(x(t))+\partial f(x(t))\ni0
\]
with vanishing parameter $\lambda(t)$. In summary, analyses of the above differential inclusion can be translated to the differential equation
\begin{equation}\label{eq:ode-reg-GD}
    \dot x(t)+\nabla f(x(t))+\lambda(t)x(t) = 0
\end{equation}
describing the regularized steepest descent in continuous time. Note that reg-GD defined in \eqref{eq:regGD} can be interpreted as explicit Euler discretization of \eqref{eq:ode-reg-GD}.

\subsubsection{Related work in the deterministic setting}

The analysis of dynamical systems corresponding to \eqref{eq:diffinclusion} with $\partial f = 0$ dates back to the 1970s. For instance, in \cite{baillon1976remarque}, it was shown that for $A = \partial g$, where $g$ is lower semicontinuous, proper, and convex, the trajectory converges weakly to a minimizer of $g$. More generally, for maximal monotone operators $A$, the ergodic average of the trajectory converges weakly to a point in $A^{-1}(\{0\})$ \cite{BRUCK197515}. 

The penalty-based differential inclusion \eqref{eq:diffinclusion} was introduced in \cite{ATTOUCH20101315}, where the authors established weak ergodic convergence (and even strong convergence for strongly monotone operators $A$) under the integrability condition
\[
\int_0^{\infty} \beta(t) \Big[ \Psi^\ast\big(\frac{p}{\beta(t)}\big) - \sigma_C\big(\frac{p}{\beta(t)}\big) \Big] \, \mathrm{d}t < \infty \quad \text{for all}\ p \in \mathrm{range}(N_C)\,,
\]
where $\Psi^\ast$ denotes the Fenchel conjugate of $\Psi$, and $\sigma_C$ is the support function of the set $C$. This condition is now commonly referred to as the \emph{Attouch–Czarnecki condition}.

Note that a similar condition arises in our analysis as the final requirement in \eqref{eq:RM}. While our condition can be characterized via the \L{}ojasiewicz inequality, the Attouch–Czarnecki condition can be characterized using a quadratic error bound of the form
\[
\Psi(x) \ge C\, \mathrm{dist}(x, C)^2\,,
\]
which implies that
\[
\Psi^\ast(p) - \sigma_C(p) \le \frac{\|p\|^2}{2C}\,,
\]
see for instance \cite{ATTOUCH20101315,attouch2011prox} for more details. In this case, the Attouch–Czarnecki condition is guaranteed under integrability conditions on the penalty function $\beta(\cdot)$.

In the discrete-time setting, the Attouch–Czarnecki condition translates into a summability condition involving both the penalty sequence and the step-sizes. For instance, \cite{peypouquet_coupling_2012} introduces a coupled gradient method with exterior penalization, leading to the condition
\[
\sum_{n \in \mathbb{N}} \alpha_n \beta_n \Big[ \Psi^\ast\big(\frac{p}{\beta_n}\big) - \sigma_C\big(\frac{p}{\beta_n}\big) \Big] < \infty\,.
\]
Here, the author considers the case where $A = \nabla g$ and both $\nabla f$ and $\nabla g$ are Lipschitz continuous, establishing weak convergence under convexity of $g$, and strong convergence when $g$ is strongly convex. Other results in the discrete-time setting include splitting-based discretization schemes \cite{ACP2011,attouch2011prox,czarnecki2016splitting} whose convergence analysis rely on some similar variant of the Attouch–Czarnecki condition.

\subsubsection{Convergence rate for reg-GD}
In the following, we quantify the rate of convergence of 
reg-GD defined in \eqref{eq:regGD}. Our derived rates are consistent with known results for the ODE \eqref{eq:ode-reg-GD}. 
In particular, for sufficiently small step-sizes, i.e. $\alpha_k \le \frac 2L$,
our results match those derived in \cite[Theorem 5]{attouch2022acceleratedgradientmethodsstrong} when defining the numerical time $t_k=\sum_{i=1}^k \alpha_i$ for $k \in \N$ and noting that $t_k \sim \frac{C_\alpha}{1-q}k^{1-q}$ and $\lambda_k \sim C_\lambda (\frac{1-q}{C_\alpha} t_k)^{-p/(1-q)}$ for $q<1$. The proof follows the strategy of \cite[Theorem 5]{attouch2022acceleratedgradientmethodsstrong}.

\begin{theorem} \label{thm:detapp}
    Suppose that \Cref{ass:smoothconvex} is satisfied. Let $C_\alpha, C_\lambda>0$, $p \in (0,1]$ and $q \in [0,1-p]$. Let $(X_k)_{k\in\N_0}$ be generated by \eqref{eq:regGD} for all $k\in\N$, $(\lambda_k)_{k \in \N}=(C_\lambda k^{-p})_{k \in \N}$ and $(\alpha_k)_{k \in \N}=(C_\alpha k^{-q})_{k \in \N}$ such that the following conditions are satisfied:
    \[
        \begin{cases}
            C_\alpha < \frac{2}{L} & : q=0\\
            2C_\lambda C_\alpha > 1-q & : q=1-p \text{ and } q \neq 0 \\
            2 C_\lambda C_\alpha (1-\frac{LC_\alpha}{2})>1&: q=0 \text{ and } p=1
        \end{cases}.
    \]
    Then it holds that
    \begin{itemize}
        \item[(i)] $E_k \in \cO(k^{-1+q})$,
        \item[(ii)] $f(X_k)-f(x_\ast)\in \cO( k^{-p})$,
        \item[(iii)] $\|X_k-x_{\lambda_{k+1}}\|_{\mathcal X}^2 \in \cO( k^{-1+q+p})$ for $q \in [0,1-p)$, and 
        \item[(iv)] $\lim_{k\to\infty}\|X_k - x_\ast\|_{\cX}= 0$ for $q \in [0,1-p)$.
    \end{itemize}
    \end{theorem}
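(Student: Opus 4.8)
The plan is to specialize the recursive estimate of \Cref{prop:descent} to the deterministic, noiseless situation $A=C=0$ and then to read off polynomial rates for the Lyapunov sequence $E_k = f_{\lambda_{k+1}}(X_k)-f_{\lambda_{k+1}}(x_{\lambda_{k+1}})$ by a direct induction. First I would observe that, since $(\lambda_k)_{k\in\N}$ decreases to $0$ and, by the standing hypotheses on $C_\alpha$, one has $\alpha_k<\frac{2}{L+\lambda_k}$ for all sufficiently large $k$ ($\alpha_k\to0$ if $q>0$, while $C_\alpha<\frac2L$ if $q=0$), there is an index $k_0$ such that the deterministic event $\mathbb A_k=\{\alpha_k\le\frac{2}{L+\lambda_k}\}$ is the whole space for all $k\ge k_0$; the finitely many earlier iterates form a fixed finite set of reals and do not affect any $\cO$-statement. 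Hence for $k\ge k_0$ the estimate of \Cref{prop:descent} with $A=C=0$ collapses to
\[
E_k \;\le\; (1-a_k)E_{k-1}+b_k,\qquad a_k:=2\alpha_k\lambda_k\Bigl(1-\tfrac{L+\lambda_k}{2}\alpha_k\Bigr),\quad b_k:=\tfrac{\lambda_k-\lambda_{k+1}}{2}\|x_\ast\|_\cX^2,
\]
and the elementary bound $t(1-\tfrac c2 t)\le\tfrac1{2c}$ for $t\in[0,\tfrac2c]$ (with $c=L+\lambda_k$, $t=\alpha_k$ on $\mathbb A_k$) shows $a_k\le\frac{\lambda_k}{L+\lambda_k}\in[0,1)$, so this is a genuine contraction-type recursion.

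Next I would plug in the schedules. Using \Cref{lem:aux1} one gets $b_k=\cO(k^{-(p+1)})$, while $a_k=2C_\alpha C_\lambda\,k^{-(p+q)}(1+o(1))$ when $q>0$ and $a_k=2C_\alpha C_\lambda(1-\tfrac{LC_\alpha}{2})\,k^{-p}(1+o(1))$ when $q=0$. The claim (i) is $E_k=\cO(k^{-(1-q)})$, which I would prove by showing that for a constant $M$ large enough and all $k$ beyond a (possibly enlarged) index $k_0$, the hypothesis $E_{k-1}\le M(k-1)^{-(1-q)}$ forces $E_k\le Mk^{-(1-q)}$. Writing $(k-1)^{-(1-q)}=k^{-(1-q)}(1+\tfrac{1-q}{k}+\cO(k^{-2}))$, the induction step reduces to the requirement
\[
b_k \;\le\; M\,k^{-(1-q)}\Bigl(a_k-\tfrac{1-q}{k}-\cO(k^{-2})\Bigr).
\]
If $p+q<1$ then $k\,a_k\to\infty$, so the parenthesis is of order $a_k$ and the right-hand side is of order $M\,k^{-(p+q)-(1-q)}=M\,k^{-(p+1)}$, which matches the upper bound on $b_k$ once $M$ is large. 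If $p+q=1$ the parenthesis equals $\tfrac1k\bigl(2C_\alpha C_\lambda-(1-q)+o(1)\bigr)$ for $q>0$ and $\tfrac1k\bigl(2C_\alpha C_\lambda(1-\tfrac{LC_\alpha}{2})-1+o(1)\bigr)$ for $q=0$ (then $p=1$), and it is positive for large $k$ precisely because of the strict inequalities $2C_\lambda C_\alpha>1-q$, respectively $2C_\lambda C_\alpha(1-\tfrac{LC_\alpha}{2})>1$, assumed in those cases; again the right-hand side is of order $M\,k^{-(1-q)-1}=M\,k^{-(p+1)}$ and dominates $b_k$. Taking also $M\ge E_{k_0}k_0^{1-q}$ for the base case finishes (i).

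The remaining assertions would then follow quickly. By \Cref{lem:attouch1}(i), $f(X_k)-f(x_\ast)\le E_k+\tfrac{\lambda_{k+1}}{2}\|x_\ast\|_\cX^2=\cO(k^{-(1-q)})+\cO(k^{-p})$, and $q\le 1-p$ gives $1-q\ge p$, so the $\cO(k^{-p})$ term dominates, which is (ii). For (iii), the $\lambda_{k+1}$-strong convexity of $f_{\lambda_{k+1}}$ (\Cref{lem:attouch1}(ii)) gives $\|X_k-x_{\lambda_{k+1}}\|_\cX^2\le\tfrac{2E_k}{\lambda_{k+1}}$; since $1/\lambda_{k+1}=\cO(k^p)$ this is $\cO(k^{-(1-q-p)})$, meaningful exactly for $q\in[0,1-p)$. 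Finally, for (iv) the triangle inequality $\|X_k-x_\ast\|_\cX\le\|X_k-x_{\lambda_{k+1}}\|_\cX+\|x_{\lambda_{k+1}}-x_\ast\|_\cX$ bounds the first summand by $\cO(k^{-(1-q-p)/2})\to0$ for $q<1-p$, and the second tends to $0$ because $\lambda_{k+1}\to0$ and $\lim_{\lambda\to0}\|x_\lambda-x_\ast\|_\cX=0$ (cf.\ \cite{attouch1996viscosity}).

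The hard part will be the induction of the second paragraph in the critical regime $p+q=1$: there $a_k$ decays exactly at rate $1/k$, so whether the induction closes is governed by the leading constant $2C_\alpha C_\lambda$ (tempered by $1-\tfrac{LC_\alpha}{2}$ when $q=0$) relative to $1-q$, which is exactly why the theorem assumes strict lower bounds on $C_\alpha C_\lambda$ only in these boundary cases. Everything else — the $o(1)$ and $\cO(k^{-2})$ bookkeeping, the estimates from \Cref{lem:aux1}, and the reductions of the third paragraph — is routine.
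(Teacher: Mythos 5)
Your proposal is correct and follows essentially the same route as the paper: both specialize \Cref{prop:descent} to the noiseless case to get $E_k\le(1-a_k)E_{k-1}+b_k$, use \Cref{lem:aux1} to bound $b_k=\cO(k^{-(1+p)})$, extract the rate $k^{-(1-q)}$ by comparing $a_k$ against $\frac{1-q}{k}$ (with the strict constants $2C_\lambda C_\alpha>1-q$, resp.\ $2C_\lambda C_\alpha(1-\frac{LC_\alpha}{2})>1$, doing the work exactly in the critical regime $p+q=1$), and then deduce (ii)--(iv) from \Cref{lem:attouch1} and $\|x_\lambda-x_\ast\|_\cX\to0$. The only cosmetic difference is that the paper runs the argument on $\varphi_k=E_kk^{1-q}$ and telescopes a shifted contraction, whereas you close a direct induction on $E_k\le Mk^{-(1-q)}$; these are equivalent bookkeeping.
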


\begin{proof}
    (i):
    \Cref{prop:descent} with $D_{k} \equiv 0$ guarantees that, for all $k \in \N_0$ with $\alpha_{k}\le \frac{2}{L+\lambda_{k}}$, one has
    \[
        E_{k} \le \Big(1-2\lambda_{k}\alpha_{k}\Big(1-\frac{L+\lambda_{k}}2\alpha_{k}\Big)\Big) E_{k-1} + \frac{\lambda_{k}-\lambda_{k+1}}2 \|x_\ast\|_\cX^2.
    \]
    Set $\beta = 1-q$ and for $k \in \N$ define\footnote{In order to avoid confusion, we note that $\varphi_k$ is defined differently as in the proofs of \Cref{thm:convgenASapp} and \Cref{thm:convgenL2app}} $\varphi_k = E_k k^\beta$. By assumption on $(\alpha_k)_{k \in \N}$ one has $\alpha_{k} < \frac{2}{L}$ for all but finitely many indices $k$. Therefore there exists an $N \in \N$ such that for all $k > N$
    \begin{align}  \label{eq:2394682}
        \varphi_{k} \le \Big(1-2\lambda_{k}\alpha_{k}\Big(1-\frac{L+\lambda_{k}}2\alpha_{k}\Big)\Big)\frac{k^\beta}{{(k-1)}^{\beta}} \varphi_{k-1} + \frac{\lambda_{k}-\lambda_{k+1}}{2 k^{-\beta}} \|x_\ast\|_\cX^2.
    \end{align} 
    Using \Cref{lem:aux1}, one has
    \begin{align*}
        \frac{\lambda_{k}-\lambda_{k+1}}{2k^{-\beta}} \le \frac{C_\lambda p}{2} k^{\beta-1-p}
    \end{align*}
    and there exist $\eps,\eps' >0$ such that after possibly increasing $N$ one has for all $k \ge N$
    \begin{align} \begin{split} \label{eq:23947263478258}
        \Bigl(1-2\lambda_{k}\alpha_{k}\Bigl(1-\frac{L+\lambda_{k}}2\alpha_{k}\Bigr)\Bigr) \frac{k^\beta}{(k-1)^{\beta}}  &\le \Bigl(1-2\lambda_{k}\alpha_{k}\Bigl(1-\frac{L+\lambda_{k}}2\alpha_{k}\Bigr)\Bigr) \Bigl(1+\frac{(\beta+\eps)}{k}\Bigr) \\
        &\le 1-\eps' \lambda_{k}\alpha_{k},
    \end{split}
    \end{align}
    where in the case $q=1-p$ and $q \neq 0$ we have used that $2C_\lambda C_\alpha > 1-q$ and in the case $q=0$ and $p=1$ we have used that $2 C_\lambda C_\alpha (1-\sfrac{LC_\alpha}{2})>1$.
    Inserting these inequalities in \eqref{eq:2394682}, 
    \begin{align*} 
        \varphi_{k} &\le (1-\eps'\lambda_{k}\alpha_{k} ) \varphi_{k-1} + \frac{C_\lambda p}{2} k^{\beta-1-p} \|x_\ast\|_\cX^2\\
        &= (1-\eps' C_\lambda C_\alpha k^{-p-q}) \varphi_{k-1} + \frac{C_\lambda p}{2} k^{-p-q} \|x_\ast\|_\cX^2, 
    \end{align*}
    which is equivalent to 
    \begin{align*}
        \Bigl( \varphi_{k} - \frac{ p}{2 \eps' C_\alpha} \|x_\ast\|_{\mathcal X} \Bigr) \le (1-\eps' C_\lambda C_\alpha k^{-p-q}) \Bigl( \varphi_{k-1} - \frac{ p}{2 \eps' C_\alpha} \|x_\ast\|_{\mathcal X} \Bigr).
    \end{align*}
    Therefore, by induction we get
    \begin{align*}
        \Bigl( \varphi_{k} - \frac{ p}{2 \eps' C_\alpha} \|x_\ast\|_{\mathcal X} \Bigr) &\le  \Bigl( \varphi_N - \frac{ p}{2 \eps' C_\alpha} \|x_\ast\|_{\mathcal X} \Bigr) \prod_{i=N+1}^k (1-\eps' C_\lambda C_\alpha (i+1)^{-p-q})\\
        & \le  \Bigl( \varphi_N - \frac{ p}{2 \eps' C_\alpha} \|x_\ast\|_{\mathcal X} \Bigr)  \exp\Bigl( - \sum_{i=N+1}^k \eps' C_\lambda C_\alpha (i+1)^{-p-q}\Bigr) \overset{k \to \infty }{\longrightarrow}0,
    \end{align*}
    where convergence holds since $p+q\le 1$. This implies 
    \[
        \limsup_{k \to \infty } \varphi_k = \limsup_{k \to \infty} E_k k^\beta \le \frac{ p}{2 \eps' C_\alpha} \|x_\ast\|_{\mathcal X}.
    \]    

    (ii): Follows from (i) and \Cref{lem:attouch1}, using that $p\le 1-q$.

    (iii): Follows from (i) and \Cref{lem:attouch1}.

    (iv): Follows from (iii) together with $\lim\limits_{\lambda \to 0} \|x_\lambda-x_\ast\|_{\mathcal X}=0$.    
\end{proof}

In the spirit of \Cref{sec:refinement}, we will derive optimal decay rates for the step-size and regularization decay for the convergence to the minimum-norm solution under the additional assumption that there exist $C_{\text{reg}},\xi>0$ with 
\begin{align*} 
    \|x_\lambda-x_\ast\|_\cX \le C_{\text{reg}} \lambda^{\xi}, \quad \lambda \in (0,1]
\end{align*}
see also \Cref{sec:practical}.
Using \Cref{thm:detapp}, one has
    \begin{align}
        \|X_k-x_\ast\|_{\cX}^2 =\cO (k^{- \min(1-q-p,2\xi p)}).
    \end{align}
    
    Thus, we get the optimal rate of convergence for $C_\alpha < \frac 2L$, $q=0$ and $p=\frac{1}{2\xi+1}$, which gives
    \[
        \|X_k-x_\ast\|_{\cX}^2 \in \cO(k^{- \frac{2\xi}{2\xi+1}}).
    \]
    In \Cref{fig:det}, we illustrate the convergence rate on depending on the decay-rates $p,q$ for $\xi=\frac 14$.

    \begin{figure}[htb!] 
    \centering
    \includegraphics[width=0.4\linewidth]{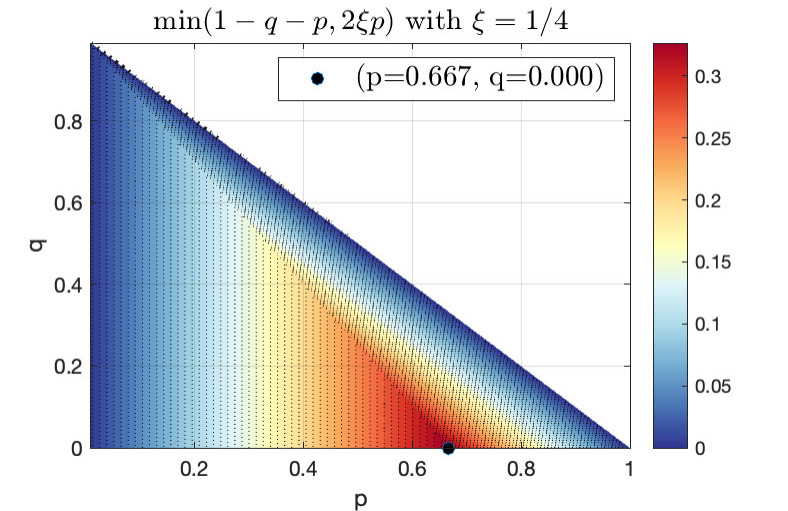}
    \caption{Convergence rate for $\|X_k-x_\ast\|_\cX^2$ in the situation of \Cref{thm:detapp} under the Polyak-\L ojasiewicz inequality, i.e. under \eqref{eq:viscosityrate} with $\xi=\frac 14$.}
    \label{fig:det}
\end{figure}

\subsection{$L^2$-convergence rate for reg-SGD: Proof of \Cref{thm:L2}}\label[appendix]{app:proofL2}
In the following, we formulate \Cref{thm:L2} in more details and provide a full prove.
\begin{theorem} 
    Suppose that \Cref{ass:smoothconvex} and \Cref{assu:ABC} are satisfied. Let $C_\alpha, C_\lambda>0$, $p \in (0,\frac 12]$ and $q \in (p,1-p]$. Let $(X_k)_{k\in\N_0}$ be generated by \eqref{eq:regSGD} with $(\alpha_k)_{k \in \N}=(C_\alpha k^{-q})_{k \in \N}$ and $(\lambda_k)_{k \in \N}=(C_\lambda k^{-p})_{k \in \N}$. If $q=1-p$ we additionally assume that $2C_\lambda C_\alpha > 1-q$.
    Then, it holds that $\lim_{k\to\infty}\E[\|X_k - x_\ast\|_{\mathcal X}^2] = 0$ and
    \begin{itemize}
        \item[(i)] $\E[E_k] \in \cO(k^{-\min(1-q,q-p)})$,
        \item[(ii)] $\E[f(X_k)-f(x_\ast)]\in\cO(k^{-\min(p,q-p)})$, and
        \item[(iii)] $\E[\|X_k-x_{\lambda_{k+1}}\|_{\mathcal X}^2]\in \cO(k^{-\min(1-q-p,q-2p)})$ for $p \in(0,\frac 13)$ and $q \in (2p,1-p)$.
    \end{itemize}
    \end{theorem}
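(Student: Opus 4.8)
The plan is to feed the polynomial schedules into the energy recursion of \Cref{prop:descent}, extract a convergence rate for the Lyapunov sequence $(E_k)$, and then translate that rate into the three claims via \Cref{lem:attouch1}; the convergence $\E[\|X_k-x_\ast\|_\cX^2]\to 0$ will come for free from \Cref{thm:convgenL2app}.

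First I would reduce to a scalar recursion. Since $\alpha_k=C_\alpha k^{-q}\to 0$, we have $\alpha_k\le \frac{2}{L+\lambda_k}$ for all $k$ beyond some $N_0$, so the indicator sets $\mathbb A_k$ in \Cref{prop:descent} are eventually trivial; taking expectations there yields, for $k\ge N_0$,
\begin{equation*}
\E[E_k]\le\Big(1-2\lambda_k\alpha_k\big(1-\tfrac{L+\lambda_k}{2}\alpha_k\big)+\tfrac{(L+\lambda_k)A}{2}\alpha_k^2\Big)\E[E_{k-1}]+\tfrac{\lambda_k-\lambda_{k+1}}{2}\|x_\ast\|_\cX^2+\tfrac{L+\lambda_k}{2}\alpha_k^2\Big(\tfrac{A\lambda_k}{2}\|x_\ast\|_\cX^2+C\Big).
\end{equation*}
Inserting the schedules and using \Cref{lem:aux1} (so $\lambda_k-\lambda_{k+1}=\cO(k^{-p-1})$), this has the form $\E[E_k]\le(1-a_k)\E[E_{k-1}]+b_k$ with $a_k=2C_\alpha C_\lambda k^{-(p+q)}(1+o(1))$ and $b_k=\cO(k^{-t})$, $t:=\min(2q,p+1)$. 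Because $q>p$ and $q<1$ (both forced by $q\in(p,1-p]$), one has $t>p+q$, so the perturbation is of strictly smaller order than the contraction rate, and moreover $t-(p+q)=\min(q-p,1-q)=\beta$, say.

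Next I would bootstrap the rate exactly as in the proof of \Cref{thm:detapp}(i): put $\psi_k:=\E[E_k]\,k^{\beta}$ and use $k^{\beta}/(k-1)^{\beta}=1+\beta/k+o(1/k)$. When $p+q<1$ the factor $k^{-(p+q)}$ dominates $1/k$, so the contraction survives the rescaling and $\psi_k\le(1-ck^{-(p+q)})\psi_{k-1}+\cO(k^{-(p+q)})$; in the borderline case $q=1-p$ the product equals $1-(2C_\alpha C_\lambda-\beta)k^{-1}+o(1/k)$, and the hypothesis $2C_\lambda C_\alpha>1-q$ together with $\beta\le 1-q$ forces $2C_\alpha C_\lambda-\beta>0$, giving again $\psi_k\le(1-\tilde c/k)\psi_{k-1}+\cO(1/k)$ with $\tilde c>0$. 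Either way $(\psi_k)$ stays bounded, hence $\E[E_k]=\cO(k^{-\beta})$, which is (i). Then (ii) follows from \Cref{lem:attouch1}(i): $\E[f(X_k)-f(x_\ast)]\le\E[E_k]+\tfrac{\lambda_{k+1}}{2}\|x_\ast\|_\cX^2=\cO(k^{-\min(1-q,q-p,p)})=\cO(k^{-\min(p,q-p)})$, using $1-q\ge p$; and (iii) follows from \Cref{lem:attouch1}(ii) (equivalently \Cref{lem:attouch1app}(ii) at $\lambda=\lambda_{k+1}$), which gives $\E[\|X_k-x_{\lambda_{k+1}}\|_\cX^2]\le 2\E[E_k]/\lambda_{k+1}=\cO(k^{-(\beta-p)})=\cO(k^{-\min(1-q-p,q-2p)})$, the assumptions $p\in(0,\tfrac13)$, $q\in(2p,1-p)$ serving only to make this exponent positive. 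Finally $\lim_k\E[\|X_k-x_\ast\|_\cX^2]=0$ follows from \Cref{thm:convgenL2app}, since $\lambda_k\downarrow 0$, $\sum_k\alpha_k\lambda_k=C_\alpha C_\lambda\sum_k k^{-(p+q)}=\infty$ (as $p+q\le 1$), $\alpha_k=o(\lambda_k)$ (as $q>p$), and $\lambda_k-\lambda_{k-1}=o(\alpha_k\lambda_k)$ (as $p+1>p+q$), i.e.\ \eqref{eq:2937293647822app} holds.

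The main obstacle is the bootstrapping step in the critical regime $q=1-p$, where the contraction rate is only of order $1/k$ and therefore competes head-on with the polynomial rescaling factor $k^{\beta}/(k-1)^{\beta}$; this is exactly where the extra hypothesis $2C_\lambda C_\alpha>1-q$ enters, mirroring the analogous balancing in \Cref{thm:detapp}. Away from that boundary everything is routine exponent bookkeeping.
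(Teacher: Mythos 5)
Your proposal is correct and follows essentially the same route as the paper: take expectations in \Cref{prop:descent}, rescale the energy by $k^{\beta}$ with $\beta=\min(1-q,q-p)$, absorb the $\cO(k^{-2q})$ and $\cO(k^{-p-1})$ perturbations into the $k^{-(p+q)}$ contraction (with the hypothesis $2C_\lambda C_\alpha>1-q$ handling the borderline $q=1-p$ exactly as in \Cref{thm:detapp}), and then read off (ii) and (iii) from \Cref{lem:attouch1} and the $L^2$-convergence from \Cref{thm:convgenL2app}. If anything, your verification of condition \eqref{eq:2937293647822app} covers the boundary case $p+q=1$ slightly more explicitly than the paper's one-line appeal to \Cref{thm:convgenL2}.
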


    \begin{proof}
    We will only prove the first claim (i). Due to \Cref{thm:convgenL2}, one has $\E[\|X_k-x_\ast\|_\cX^2] \to 0$ if $q>p$ and $p+q<1$. The statements (ii)-(iii) follow analogously to the proof of \Cref{thm:detapp}.
    Let $\beta = \min(1-q,q-p)$ and $\varphi_k = \E[E_k] k^\beta$. By assumption on $(\alpha_k)_{k \in \N}$ one has $\alpha_{k} < \frac{2}{L}$ for all but finitely many $k\in\N$ so that, using \Cref{prop:descent},
    \begin{align*}  
        \varphi_{k} &\le \Big(1-2\lambda_{k}\alpha_{k}\Big(1-\frac{L+\lambda_{k}}2\alpha_{k}\Big)+\frac{L+\lambda_{k}}{2} \alpha_{k}^2 A \Big) \frac{k^\beta}{(k-1)^{\beta}} \varphi_{k-1} \\
        &\quad + \frac{\lambda_{k}-\lambda_{k+1}}{2 k^{-\beta} } \|x_\ast\|_\cX^2 +\frac{L+\lambda_{k}}{2k^{-\beta}} \alpha_{k}^2 \bigl( A\frac {\lambda_{k}}2 \|x_\ast\|_{\mathcal X}^2 +C \bigr).
    \end{align*}
    Since $q > p$, $p+q \ge 1$, and $2 C_\lambda C_\alpha > \min(p,1-2p)$ in the case that $q=1-p$
    one can show as in \eqref{eq:23947263478258} that there exist $\eps' >0$ and $N \in \N$ such that for all $k \ge N$ 
    \begin{equation} \label{eq:iterativeExp}
        \varphi_{k} \le (1-\eps'\lambda_{k}\alpha_{k} ) \varphi_{k-1} + \frac{C_\lambda p}{2} k^{\beta-1-p} \|x_\ast\|_\cX^2+\frac{L+\eps'}{2}C_\alpha^2 \Big(\frac{A \eps'}{2}\|x_\ast\|_\cX^2+C\Big) k^{-2q+\beta}.
    \end{equation}
    By choice of $\beta$, one has $p+q = \max(1+p-\beta,2q-\beta)$. Therefore, we can show analogously to the proof of \Cref{thm:detapp} that
    \[
        \limsup_{k \to \infty } \varphi_k = \limsup_{k \to \infty} \E[E_k] k^\beta  < \infty.
    \]  
\end{proof}

\subsection{Almost sure convergence rate for reg-SGD: Proof of \Cref{thm:almostsure}}\label[appendix]{app:proofas}
In the following, we formulate \Cref{thm:almostsure} in more details and provide a full prove.
The proof of the almost sure convergence rates requires a sophisticated application of the Robbins-Siegmund theorem, \Cref{cor:RSeasy}. For this, we use the variation of constants formula to separate the influence of the stochastic noise term $(D_k)_{k \in \N}$ and the deterministic change in the global minimum of the regularized objective function $(x_{\lambda_k}-x_{\lambda_{k+1}})_{k \in \N}$.

\begin{theorem} 
Suppose that \Cref{ass:smoothconvex} and \Cref{assu:ABC} are satisfied. Let $C_\alpha, C_\lambda>0$, $p \in (0,\frac 12)$ and $q \in (\frac 12,1-p]$. Let $(X_k)_{k\in\N_0}$ be generated by \eqref{eq:regSGD} with $(\alpha_k)_{k \in \N}=(C_\alpha k^{-q})_{k \in \N}$ and $(\lambda_k)_{k \in \N}=(C_\lambda k^{-p})_{k \in \N}$. 
Let $\beta\in(0,2q-1)$ and, if $q=1-p$, we assume that $2C_\lambda C_\alpha > \min(\beta,1-q)$.
    Then,
    \begin{itemize}
        \item[(i)] $E_k \in  \cO( k^{-\min(\beta,1-q)})$ almost surely, %
        \item[(ii)] $f(X_k)-f(x_\ast) \in  \cO(k^{-\min(\beta, p)})$ almost surely,
        \item[(iii)] $\|X_k-x_{\lambda_{k+1}}\|_{\mathcal X} \in \cO(k^{-\min(\beta-p,1-q-p)})$ almost surely, and 
        \item[(iv)] $\lim_{k\to\infty}\|X_k - x_\ast\|_{\mathcal X} \to 0 $ almost surely for $p \in (0,\frac 13)$ and $q \in (\frac{p+1}{2} ,1-p)$.
    \end{itemize}
\end{theorem}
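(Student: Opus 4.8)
I would first reduce (ii)--(iv) to the rate (i) for $E_k$, and then prove (i) by combining a pathwise refinement of \Cref{prop:descent} with a variation‑of‑constants argument. Granting (i): statement (ii) is immediate from \Cref{lem:attouch1}(i), since $f(X_k)-f(x_\ast)\le E_k+\tfrac{\lambda_{k+1}}2\|x_\ast\|_\cX^2=\cO(k^{-\min(\beta,1-q)})+\cO(k^{-p})=\cO(k^{-\min(\beta,p)})$ because $q\le 1-p$ forces $1-q\ge p$; statement (iii) is \Cref{lem:attouch1}(ii), giving $\|X_k-x_{\lambda_{k+1}}\|_\cX^2\le 2E_k/\lambda_{k+1}=\cO(k^{-\min(\beta-p,1-q-p)})$; and (iv) follows from (iii) by choosing $\beta\in(p,2q-1)$ (possible exactly because $q>\tfrac{p+1}2$, and the interval is non‑empty because $p<\tfrac13$), so the exponent there is positive and $\|X_k-x_{\lambda_{k+1}}\|_\cX\to 0$ a.s.; with $\|x_{\lambda_{k+1}}-x_\ast\|_\cX\to 0$ and the triangle inequality this also gives $\|X_k-x_\ast\|_\cX\to 0$ a.s.

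\emph{A pathwise recursion.} Running the proof of \Cref{prop:descent} but \emph{without} taking conditional expectations of the $D_k$‑terms (apply the descent inequality \eqref{eq:descent} to the $(L+\lambda_k)$‑smooth $f_{\lambda_k}$ along \eqref{eq:regSGD}, expand $\|\nabla f_{\lambda_k}(X_{k-1})+D_k\|_\cX^2$, use \eqref{eq:PL}, \Cref{lem:regfun_decrease}, and the bound $E_k\le f_{\lambda_k}(X_k)-f_{\lambda_k}(x_{\lambda_k})+\tfrac{\lambda_k-\lambda_{k+1}}2\|x_\ast\|_\cX^2$) yields, for all $k$ large enough that $\alpha_k\le\tfrac2L$,
\[
E_k\ \le\ (1-c_k)E_{k-1}+\delta_k+\mu_k+\nu_k,
\]
where $c_k\asymp 2C_\lambda C_\alpha k^{-(p+q)}$, the drift $\delta_k=\tfrac{\lambda_k-\lambda_{k+1}}2\|x_\ast\|_\cX^2\asymp k^{-(p+1)}$ (\Cref{lem:aux1}), $\mu_k$ is a martingale difference with $\E[\mu_k^2\mid\cF_{k-1}]\lesssim\alpha_k^2\|\nabla f_{\lambda_k}(X_{k-1})\|_\cX^2\lesssim\alpha_k^2E_{k-1}$ (using the analogue of \eqref{eq:009} for $f_{\lambda_k}$), and $\nu_k=\tfrac{L+\lambda_k}2\alpha_k^2\|D_k\|_\cX^2\ge0$ has $\E[\nu_k\mid\cF_{k-1}]\lesssim\alpha_k^2(E_{k-1}+1)$ by \Cref{assu:ABC}. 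Applying the Robbins--Siegmund theorem (\Cref{lem:RS}, or \Cref{cor:RS2}) to the conditional‑expectation version in \Cref{prop:descent} --- legitimate since $\sum\alpha_k\lambda_k=\infty$, $\sum\alpha_k^2<\infty$, $\sum(\lambda_k-\lambda_{k+1})<\infty$ --- gives $E_k\to 0$ a.s.\ (hence $\sup_kE_k<\infty$) and $\sum_k\alpha_k\lambda_kE_{k-1}<\infty$ a.s.; since $\alpha_k=o(\lambda_k)$ this implies $\sum_k\alpha_k^2E_{k-1}<\infty$ and hence $\sum_k\nu_k<\infty$ a.s.

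\emph{Variation of constants and the drift/noise split.} Writing $\Pi_k:=\prod_{i=N+1}^k(1-c_i)$ and unrolling,
\[
E_k\ \le\ \Pi_kE_N+\sum_{j=N+1}^k\tfrac{\Pi_k}{\Pi_j}\delta_j+\sum_{j=N+1}^k\tfrac{\Pi_k}{\Pi_j}\nu_j+\Pi_k\!\sum_{j=N+1}^k\Pi_j^{-1}\mu_j .
\]
Since $\sum_{i=j+1}^kc_i\asymp k^{1-p-q}-j^{1-p-q}$ ($\asymp\log(k/j)$ if $p+q=1$), the kernel $\Pi_k/\Pi_j$ concentrates on a window $|k-j|\lesssim 1/c_k$ around $j=k$; an Abel estimate then shows the slowly varying drift contributes only its quasi‑stationary level $\sum_j\tfrac{\Pi_k}{\Pi_j}\delta_j\lesssim\delta_k/c_k\asymp k^{-(1-q)}$, and the same estimate applied to $\E[\nu_j\mid\cF_{j-1}]\lesssim\alpha_j^2$ (once $E$ is bounded) gives $\sum_j\tfrac{\Pi_k}{\Pi_j}\E[\nu_j\mid\cF_{j-1}]\lesssim\alpha_k^2/c_k\asymp k^{-(q-p)}=o(k^{-\min(\beta,1-q)})$, using $\beta<2q-1\le q-p$. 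The leftover $\sum_j\tfrac{\Pi_k}{\Pi_j}(\nu_j-\E[\nu_j\mid\cF_{j-1}])$ I would control via $\sum_j\nu_j<\infty$ a.s.: split at an index where $\Pi_k/\Pi_j\le k^{-\beta}$ so the head is $\le k^{-\beta}\sum_j\nu_j$, and bound the tail by the a.s.\ rate $\sum_{j>m}\nu_j=\cO(m^{-(2q-1)+\eps})$ (from $\E\sum_{j>m}\nu_j\lesssim m^{-(2q-1)}$); and $\Pi_kE_N$ decays super‑polynomially when $p+q<1$ and like $k^{-2C_\lambda C_\alpha}=\cO(k^{-\beta})$ when $q=1-p$ by the hypothesis $2C_\lambda C_\alpha>\beta$. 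For the martingale term $N_k:=\Pi_k\sum_j\Pi_j^{-1}\mu_j$ I would use $\sum_kb_k^2\E[\mu_k^2\mid\cF_{k-1}]\lesssim\sum_kb_k^2\alpha_k^2E_{k-1}<\infty$ a.s.\ with $b_k=k^{(2q-1)/2-\eps}$ (this is where $\beta<2q-1$ enters), so $\sum_kb_k\mu_k$ converges a.s.; a Kronecker/Abel argument then yields $|\sum_{i>m}\mu_i|=\cO(b_m^{-1})$, and with $\sum_{j=N+2}^kc_j\tfrac{\Pi_k}{\Pi_j}=1-\tfrac{\Pi_k}{\Pi_{N+1}}\to1$ and the kernel concentration one gets $N_k=\cO(k^{-(2q-1)/2+\eps})$, pushed up to $N_k=\cO(k^{-\min(\beta,1-q)})$ by a finite bootstrap (once $E_k=\cO(k^{-s})$ is known take $b_k=k^{(2q-1+s)/2-\eps}$). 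Equivalently one subtracts the deterministic solution $D^{\mathrm{drift}}_k=\cO(k^{-\min(1-q,q-p)})$ of $D^{\mathrm{drift}}_k=(1-c_k)D^{\mathrm{drift}}_{k-1}+\delta_k+C'\alpha_k^2$ and checks that $Y_k:=k^{\min(\beta,1-q)}(E_k-D^{\mathrm{drift}}_k)^+$ satisfies $\E[Y_k\mid\cF_{k-1}]\le Y_{k-1}+z_k$ with $\sum z_k<\infty$ a.s., so \Cref{cor:RSeasy} forces $Y_k$ to converge. Summing the four contributions gives $E_k=\cO(k^{-(1-q)})+\cO(k^{-(q-p)})+\cO(k^{-\beta})=\cO(k^{-\min(\beta,1-q)})$, i.e.\ (i).

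\emph{Main obstacle.} The decisive point --- and the reason a naive rescaling of $E_k$ by $k^\beta$ together with Robbins--Siegmund does not work --- is the regularization drift: after such a rescaling the term $k^\beta\delta_k\asymp k^{\beta-p-1}$ fails to be summable as soon as $\beta\ge p$. The variation‑of‑constants formula is exactly what makes the slowly varying $\delta_k$ contribute only $\delta_k/c_k\asymp k^{-(1-q)}$ instead of $\sum_j\delta_j$; separating off this explicit deterministic part leaves a genuinely stochastic remainder whose \emph{sharp} rate $k^{-\beta}$ (rather than the easy $k^{-\beta/2}$ from a crude second‑moment bound) must be extracted from the martingale increments, which is the delicate step and is what forces both the Kronecker‑type bootstrap and the condition $\beta<2q-1$. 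In the boundary case $q=1-p$, where $c_k\asymp k^{-1}$, the extra hypothesis $2C_\lambda C_\alpha>\beta$ is precisely what keeps the rescaled contraction factor $\tfrac{k^{\min(\beta,1-q)}}{(k-1)^{\min(\beta,1-q)}}(1-c_k)$ below $1$ and the kernel sums above finite.
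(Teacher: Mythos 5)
Your reduction of (ii)--(iv) to (i) matches the paper's (via \Cref{lem:attouch1} and the triangle inequality), and you have correctly isolated the crux: after rescaling $E_k$ by $k^\beta$ the regularization drift $\tfrac{\lambda_k-\lambda_{k+1}}{2}\|x_\ast\|_\cX^2\asymp k^{-(p+1)}$ is not summable once $\beta\ge p$, and the fix is a variation-of-constants separation of that drift. This is exactly the paper's mechanism: it defines $\Psi_k=\sum_{i\le k}\tfrac{\lambda_i-\lambda_{i+1}}{2}\|x_\ast\|_\cX^2\prod_{j=i+1}^k(1-c_j)$ (your $\sum_j\tfrac{\Pi_k}{\Pi_j}\delta_j$), shows $\Psi_k\in\cO(k^{-1+q})$ by the deterministic argument of \Cref{thm:detapp}, and then applies \Cref{cor:RSeasy} to $k^\beta(E_k-\Psi_k)$, whose recursion retains only the $\alpha_k^2$-perturbation, summable after rescaling precisely because $\beta<2q-1$. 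The difference is that the paper never leaves the conditional-expectation level: Robbins--Siegmund absorbs all martingale fluctuations for free, so there is no need for your decomposition into $\mu_k,\nu_k$, the kernel-concentration/Abel estimates for $\Pi_k\sum_j\Pi_j^{-1}\mu_j$, the Kronecker tail bounds, or the bootstrap in $s$. Your primary route would presumably work if those steps were carried out, but as written they are assertions, and they are the hardest part of your argument; the paper's route makes them unnecessary.

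Two soft spots to flag. First, in your ``equivalently'' variant you pass to $Y_k=k^{\min(\beta,1-q)}(E_k-D^{\mathrm{drift}}_k)^+$; the positive part is convex, so the conditional supermartingale inequality for $E_k-D^{\mathrm{drift}}_k$ does not transfer to $(E_k-D^{\mathrm{drift}}_k)^+$ by Jensen. The paper avoids this by noting $\tilde E_k=E_k-\Psi_k\ge-\sup_j\Psi_j$ and invoking \Cref{cor:RSeasy}, which only requires uniform boundedness from below, not nonnegativity. Second, your bound $\E[\mu_k^2\mid\cF_{k-1}]\lesssim\alpha_k^2E_{k-1}$ silently drops the factor $\E[\|D_k\|_\cX^2\mid\cF_{k-1}]\lesssim E_{k-1}+1$ from the cross term; this is harmless only after you have already established $\sup_kE_k<\infty$ a.s., so the order of the argument matters there.
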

\begin{proof}We will only prove property (i). Properties (ii)-(iv) follow analogously to the proof of \Cref{thm:detapp}.
		By \Cref{prop:descent}, for all $k \in \N$ with $\alpha_{k}\le \frac{2}{L+\lambda_{k}}$ one has
        \begin{align*}
    \E[E_{k}\mid \mathcal F_{k-1}] & \le \Big(1-2\lambda_{k}\alpha_{k}\Big(1-\frac{L+\lambda_k}2\alpha_{k}\Big)+\frac{L+\lambda_{k}}{2} \alpha_{k}^2 A\Big) E_{k-1} \\ 
    & \quad + \frac{\lambda_{k}-\lambda_{k+1}}{2}\|x_\ast\|_{\mathcal X}^2 
         +\frac{L+\lambda_{k}}{2} \alpha_{k}^2 \Big( A\frac {\lambda_{k}}2 \|x_\ast\|_{\mathcal X}^2 +C \Big).
    \end{align*}
		For $k \in \N_0$ we define 
		\[
		\Psi_k = \sum_{i=1}^{k} \frac{\lambda_i-\lambda_{i+1}}2 \|x_\ast\|_\cX^2 \prod_{j=i+1}^{k} \Big(1-2\lambda_{j}\alpha_{j}\Big(1-\frac{L+\lambda_j}2\alpha_{j}\Big)+\frac{L+\lambda_{j}}{2} \alpha_{j}^2 A\Big)
		\] 
        and
		$\tilde E_k = E_k-\Psi_k$. Since $\alpha_k \to 0$, one has for all but finitely many $k$'s that
		\begin{align*}
			\E[\tilde E_{k} \mid\mathcal F_{k-1}] & = \E[E_{k}-\Psi_{k} \mid\mathcal F_{k-1}] \\
			& \le \Big(1-2\lambda_{k}\alpha_{k}\Big(1-\frac{L+\lambda_k}2\alpha_{k}\Big)+\frac{L+\lambda_{k}}{2} \alpha_{k}^2 A\Big) E_{k-1}  \\
            & \quad + \frac{\lambda_{k}-\lambda_{k+1}}{2}\|x_\ast\|_{\mathcal X}^2 
         +\frac{L+\lambda_{k}}{2} \alpha_{k}^2 \Big( A\frac {\lambda_{k}}2 \|x_\ast\|_{\mathcal X}^2 +C \Big) - \Psi_{k} \\
         &= \Big(1-2\lambda_{k}\alpha_{k}\Big(1-\frac{L+\lambda_k}2\alpha_{k}\Big)+\frac{L+\lambda_{k}}{2} \alpha_{k}^2 A\Big) E_{k-1}  \\
            & \quad + \frac{\lambda_{k}-\lambda_{k+1}}{2}\|x_\ast\|_{\mathcal X}^2 
         +\frac{L+\lambda_{k}}{2} \alpha_{k}^2 \Big( A\frac {\lambda_{k}}2 \|x_\ast\|_{\mathcal X}^2 +C \Big) \\
         & \quad - \Big(1-2\lambda_{k}\alpha_{k}\Big(1-\frac{L+\lambda_k}2\alpha_{k}\Big)+\frac{L+\lambda_{k}}{2} \alpha_{k}^2 A\Big) \Psi_{k-1} - \frac{\lambda_{k}-\lambda_{k+1}}{2}\|x_\ast\|_{\mathcal X}^2 \\
			&= \Big(1-2\lambda_{k}\alpha_{k}\Big(1-\frac{L+\lambda_k}2\alpha_{k}\Big)+\frac{L+\lambda_{k}}{2} \alpha_{k}^2 A\Big) \tilde E_{k-1}  \\
            & \quad +\frac{L+\lambda_{k}}{2} \alpha_{k}^2 \Big( A\frac {\lambda_{k}}2 \|x_\ast\|_{\mathcal X}^2 +C \Big) .
		\end{align*}
		
		Let $\beta \in (0,2q-1)$, $\tilde \beta = \min(\beta, 1-q)$ and $\varphi_k = \tilde E_k k^{\tilde \beta}$. By assumption on $(\alpha_k)_{k \in \N}$ one has $\alpha_{k} < \frac{2}{L}$ for all but finitely many $k\in\N$ so that 
		\begin{align*} 
			\E[\varphi_{k}\mid \mathcal F_{k-1}] &\le \Big(1-2\lambda_{k}\alpha_{k}\Big(1-\frac{L+\lambda_k}2\alpha_{k}\Big)+\frac{L+\lambda_{k}}{2} \alpha_{k}^2 A\Big) \frac{k^{\tilde \beta}}{(k-1)^{\tilde \beta}} \varphi_k \\
            & \quad  +\frac{L+\lambda_{k}}{2} C_\alpha^2  \Big( A\frac {\lambda_{k}}2 \|x_\ast\|_{\mathcal X}^2 +C \Big) k^{-2q+\beta}
		\end{align*} 
           Since $q > p$, $p+q \ge 1$, and $2C_\lambda C_\alpha > \tilde \beta$ in the case that $q=1-p$, one can show as in \eqref{eq:23947263478258} that there exist $\eps' >0$ and $N \in \N$ such that for all $k \ge N$ 
    \begin{equation} \label{eq:iterativeExpcond}
        \E[\varphi_{k} \mid \cF_{k-1}] \le (1-\eps'\lambda_{k}\alpha_{k} ) \varphi_{k-1} +\frac{L+\eps'}{2}C_\alpha^2 \Big(\frac{A \eps'}{2}\|x_\ast\|_\cX^2+C\Big) k^{-2q+\beta},
    \end{equation}
    for all sufficiently large $k$.
        
       In order to apply the Robbins-Siegmund theorem, \Cref{cor:RSeasy}, we first prove that $(\Psi_k)_{k \in \N}\in \cO(k^{-\tilde \beta})$.
        Note that $(\Psi_k)_{k \in \N}$ is a deterministic sequence that satisfies for all $k \in \N$
		\[
			\Psi_{k} = \Big(1-2\lambda_{k}\alpha_{k}\Big(1-\frac{L+\lambda_k}2\alpha_{k}\Big)+\frac{L+\lambda_{k}}{2} \alpha_{k}^2 A\Big) \Psi_{k-1} + \frac{\lambda_{k}-\lambda_{k+1}}2 \|x_\ast\|_\cX^2.
		\]
        Therefore, analogously to the proof in the deterministic setting, see \Cref{thm:detapp} and especially \eqref{eq:23947263478258}, we get $\Psi_k \in \cO(k^{-\tilde \beta})$, i.e. $(\Psi_k k^{\tilde \beta})_{k \in \N}$ is bounded and, subsequently, $(\varphi_k)_{k \in \N}$ is uniformly bounded from below.
        Now, since $\beta < 2q-1$ we get $\sum k^{-2q+\beta} < \infty$. Hence, we can apply \Cref{cor:RSeasy} to get almost sure convergence of $(\varphi_k)_{k \in \N}$ and, thus,
		$
			\tilde E_k \in \cO(k^{-\tilde \beta})
		$
        almost surely.
        Together with $\tilde E_k = E_k -\Psi_k$ and $\Psi_k = \cO(k^{-\tilde \beta})$, this implies that
		$
			E_k \in \cO( k^{-\tilde \beta})
		$
        almost surely.
\end{proof}

\section{Properties of the Tikhonov regularization}\label[appendix]{app:minnorm}
In the following section, we want to describe scenarios in which \eqref{eq:viscosityrate} is satisfied.

\paragraph{Linear inverse problems.} Let \( A: \mathcal{X} \to \mathcal{Y} \) be a compact linear operator between two Hilbert spaces. For \( y \in \mathcal{R}(A) \oplus \mathcal{R}(A)^\perp \), the minimum-norm solution to the problem
\[
\min_{x \in \mathcal{X}} f(x), \quad f(x) = \frac{1}{2} \| A x - y \|_{\mathcal{Y}}^2
\]
can be written in the form of the singular value decomposition (SVD) of \( A \):
\[
x_\ast = A^\dagger y = \sum_{n \in \mathbb{N}} \frac{1}{\sigma_n} \langle y, u_n \rangle_{\mathcal{Y}} v_n,
\]
where \( (\sigma_n, u_n, v_n)_{n \in \mathbb{N}} \) is the SVD of \( A \) with singular values \( (\sigma_n)_{n \in \mathbb{N}} \), an orthonormal basis \( (u_n)_{n \in \mathbb{N}} \) of \( \overline{\mathcal{R}(A)} \), and an orthonormal basis \( (v_n)_{n \in \mathbb{N}} \) of \( \overline{\mathcal{R}(A^*)} \). Similarly, for any \( \lambda > 0 \), the unique minimizer of
\[
\min_{x \in \mathcal{X}} f_\lambda(x), \quad f_\lambda(x) = \frac{1}{2} \| A x - y \|_{\mathcal{Y}}^2 + \frac{\lambda}{2} \| x \|_{\mathcal{X}}^2
\]
can also be written using the SVD as:
\[
x_\lambda = \sum_{n \in \mathbb{N}} \frac{\sigma_n}{\sigma_n^2 + \lambda} \langle y, u_n \rangle_{\mathcal{Y}} v_n.
\]

To obtain a convergence rate for \( \| x_\ast - x_\lambda \|_{\mathcal{X}} \) as \( \lambda \to 0 \), we need to bound
\[
r_n(\lambda) := \frac{1}{\sigma_n} - \frac{\sigma_n}{\sigma_n^2 + \lambda} = \frac{\sigma_n (\sigma_n^2 + \lambda) - \sigma_n^3}{\sigma_n^2 (\sigma_n^2 + \lambda)} = \frac{\lambda}{\sigma_n (\sigma_n^2 + \lambda)}.
\]
However, when \( A \) is infinite-dimensional, the singular values \( \sigma_n \) are positive and satisfy \( \lim_{n \to \infty} \sigma_n = 0 \), meaning that \( r_n(\lambda) \) remains unbounded. Therefore, without additional assumptions, we can only deduce that
\[
\lim_{\lambda \to 0} \| x_\ast - x_\lambda \|_{\mathcal{X}} = 0,
\]
but without a specific rate in \( \lambda \). To impose a convergence rate, one typically assumes a so-called source condition \cite{EHN1996} common in the inverse problem literature, which imposes a smoothness assumption on the (infinite-dimensional) minimum-norm solution \( x_\ast \).

In terms of the SVD, the source condition with parameter \( \nu > 0 \) can be described by the representation of the minimum-norm solution
\[
A^\dagger y = x_\ast = \sum_{n \in \mathbb{N}} \sigma_n^\nu \langle w, v_n \rangle_{\mathcal{X}} v_n,
\]
for some bounded \( w \in \mathcal{X} \). Using this representation together with the SVD expression for \( x_\lambda \), one can derive the following bound for the error \( \| x_\ast - x_\lambda \|_{\mathcal{X}}^2 \):
\[
\| x_\ast - x_\lambda \|_{\mathcal{X}}^2 \leq
\begin{cases}
C_\nu \lambda^{2}, & \nu \geq 2, \\
C_\nu \lambda^{\nu}, & \nu < 2,
\end{cases}
\]
where \( C_\nu \) is a constant depending on \( \nu>0\).

\paragraph{\L ojasiewicz condition.} Introduced in the 1960s by \L ojasiewicz \cite{lojasiewicz1963propriete, lojasiewicz1965ensembles}, the \L ojasiewicz inequality \eqref{eq:lojaapp} has become one of the standard assumptions for convergence of gradient based algorithms \cite{lojasiewicz1982trajectoires, absil2005convergence,tadic2015convergence, dereich2021convergence, weissmann2025almost}. It has the appeal that it is locally satisfied by every analytic objective function \cite{lojasiewicz1963propriete}. In the machine learning community, \eqref{eq:lojaapp} with $\tau = \frac 12 $ is especially popular, since it allows linear convergence of deterministic algorithms in non-convex situations \cite{10.1007/978-3-319-46128-1_50,Wojtowytsch2023}. We cite a recent result in \cite{maulen2024tikhonov} that derives and upper bound for the distance of $x_\lambda$ and $x_\ast$ under validity of the \L ojasiewicz inequality. The result uses a connection between the \L ojasiewicz inequality and a Hölderian error bound derived in \cite{bolte2017error}. 

\begin{lemma} [See Theorem 5 in \cite{bolte2017error} and 4.7 in \cite{maulen2024tikhonov}] \label{lem:lojarate}
    Let $f:\cX \to \R$ be a differentiable, convex function with $\argmin f \neq \emptyset$.
    \begin{enumerate}
        \item[(i)] Assume that there exist $\tilde C, r>0$ and $\tau \in [0,1)$ such that 
        \begin{align} \label{eq:lojaapp}
            (f(x)-f(x_\ast))^\tau \le \tilde C \|\nabla f(x)\|_\cX \quad \text{ for all } x \in f^{-1}([f(x_\ast), f(x_\ast)+r]).
        \end{align}
        Then there exists a constant $C' >0$ such that with $\rho=\frac{1}{1-\tau}$ it holds that
        \begin{align} \label{eq:EB}
        f(x)-f(x_\ast) \ge C' \inf_{\hat x \in \argmin f}\|x-\hat x\|_\cX^\rho \quad \text{ for all } x \in f^{-1}([f(x_\ast), f(x_\ast)+r]).
        \end{align}
        \item[(ii)] Assume that \eqref{eq:EB} holds. 
    Then, there exist $C_{\text{reg}},\eps>0$ such that
    \[
        \|x_\lambda-x_\ast\|_{\mathcal X} \le C_{\text{reg}} \lambda^{\frac{1}{2\rho}} \quad \text{ for all } \lambda \in [0,\eps].
    \]
    \end{enumerate}
\end{lemma}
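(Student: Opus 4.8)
The plan is to prove the two implications in turn; both are by now standard (the first is essentially Theorem~5 of \cite{bolte2017error}, the second is Lemma~4.7 of \cite{maulen2024tikhonov}), so I would present the shape of the arguments rather than chase constants. For part~(i), I would use a desingularization / trajectory--length estimate. Fix $x$ with $f(x)\in(f(x_\ast),f(x_\ast)+r]$ (the case $f(x)=f(x_\ast)$ is trivial, since then $\mathrm{dist}(x,\argmin f)=0$) and run the gradient flow $\dot y(t)=-\nabla f(y(t))$ with $y(0)=x$. Since $t\mapsto f(y(t))$ is non-increasing and bounded below by $f(x_\ast)$, the whole trajectory stays in $f^{-1}([f(x_\ast),f(x_\ast)+r])$, so \eqref{eq:lojaapp} is available along it. Writing $g(t)=f(y(t))-f(x_\ast)$ and $\phi(s)=\frac{1}{1-\tau}s^{1-\tau}$, one computes, as long as $\nabla f(y(t))\neq 0$ (otherwise the flow has already reached $\argmin f$),
\[
\frac{\dd}{\dd t}\phi(g(t))=g(t)^{-\tau}g'(t)=-\,g(t)^{-\tau}\|\nabla f(y(t))\|_\cX^2\;\le\;-\frac{1}{\tilde C}\|\nabla f(y(t))\|_\cX=-\frac{1}{\tilde C}\|\dot y(t)\|_\cX,
\]
the inequality being exactly \eqref{eq:lojaapp}. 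Integrating over $[0,\infty)$ bounds the length of the trajectory by $\tilde C\,\phi(g(0))=\frac{\tilde C}{1-\tau}(f(x)-f(x_\ast))^{1-\tau}$; in particular the flow has finite length and, by convexity of $f$, converges strongly to some $\bar x\in\argmin f$. Hence $\mathrm{dist}(x,\argmin f)\le\|x-\bar x\|_\cX\le\frac{\tilde C}{1-\tau}(f(x)-f(x_\ast))^{1-\tau}$, which rearranges to \eqref{eq:EB} with $\rho=\frac{1}{1-\tau}$ and $C'=\bigl(\tfrac{1-\tau}{\tilde C}\bigr)^{\rho}$. If one wishes to sidestep the existence theory for the flow in a Hilbert space, the identical bookkeeping runs along the iterates $y_{k+1}=y_k-h\nabla f(y_k)$ with $h<1/L$ via the descent inequality \eqref{eq:descent}, which is the route of \cite{bolte2017error}.

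For part~(ii), I would combine \eqref{eq:EB} with the elementary a priori bounds on $x_\lambda$. From $f_\lambda(x_\lambda)\le f_\lambda(x_\ast)$ (Lemma~\ref{lem:regfun_decrease}) one gets both $\|x_\lambda\|_\cX\le\|x_\ast\|_\cX$ and $0\le f(x_\lambda)-f(x_\ast)\le\frac{\lambda}{2}\|x_\ast\|_\cX^2$. Choosing $\eps>0$ with $\frac{\eps}{2}\|x_\ast\|_\cX^2\le r$, for every $\lambda\in(0,\eps]$ the point $x_\lambda$ lies in the validity region of \eqref{eq:EB}, so $C'\,\mathrm{dist}(x_\lambda,\argmin f)^{\rho}\le f(x_\lambda)-f(x_\ast)\le\frac{\lambda}{2}\|x_\ast\|_\cX^2$, whence $\mathrm{dist}(x_\lambda,\argmin f)\le C_1\lambda^{1/\rho}$. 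Let $\bar x$ be the metric projection of $x_\lambda$ onto the closed convex set $\argmin f$, so $\|x_\lambda-\bar x\|_\cX\le C_1\lambda^{1/\rho}$. The decisive step is that closeness of $\bar x$ to $x_\lambda$ forces closeness to the minimum-norm point $x_\ast$: since $x_\ast$ is the projection of $0$ onto $\argmin f$, every $y\in\argmin f$ satisfies $\|y-x_\ast\|_\cX^2\le\|y\|_\cX^2-\|x_\ast\|_\cX^2$, and applying this with $y=\bar x$, together with $\|\bar x\|_\cX\le\|x_\lambda-\bar x\|_\cX+\|x_\lambda\|_\cX\le C_1\lambda^{1/\rho}+\|x_\ast\|_\cX$, yields $\|\bar x-x_\ast\|_\cX^2\le C_1^2\lambda^{2/\rho}+2C_1\|x_\ast\|_\cX\lambda^{1/\rho}$. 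Taking square roots and using $\|x_\lambda-x_\ast\|_\cX\le\|x_\lambda-\bar x\|_\cX+\|\bar x-x_\ast\|_\cX$, the slowest-decaying term is of order $\lambda^{1/(2\rho)}$; this is the asserted bound for $\lambda$ small (equivalently \eqref{eq:viscosityrate} with $\xi=\tfrac{1}{2\rho}$), after possibly shrinking $\eps$ and enlarging the constant, and the case $\lambda=0$ is trivial.

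The step requiring the most care is the convergence claim in part~(i): one must argue not merely that the descent trajectory has finite length but that it converges to a \emph{minimizer} of $f$, and this is exactly where convexity is used --- it is cleanest to invoke the known convergence theory for the convex gradient flow (or proximal--gradient iteration) rather than reprove it, while also checking that the trajectory never leaves $f^{-1}([f(x_\ast),f(x_\ast)+r])$ so that \eqref{eq:lojaapp} keeps applying. The second conceptual point worth flagging is the ``square-root loss'' in part~(ii): the distance to the whole solution set decays like $\lambda^{1/\rho}$, but the distance to the \emph{minimum-norm} solution only like $\lambda^{1/(2\rho)}$, the loss coming from the norm-comparison inequality for projections onto $\argmin f$; using the sharper estimate $f(x_\lambda)-f(x_\ast)\le\lambda\langle x_\lambda,\bar x-x_\lambda\rangle_\cX$ (which follows from $\nabla f_\lambda(x_\lambda)=0$ and convexity) in place of $\tfrac{\lambda}{2}\|x_\ast\|_\cX^2$ would in fact give the better exponent $\tfrac{1}{2(\rho-1)}$, but the stated $\tfrac{1}{2\rho}$ is all that is needed downstream.
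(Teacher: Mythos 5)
The paper does not actually prove this lemma: it is imported verbatim from Theorem~5 of \cite{bolte2017error} (part~(i)) and Lemma~4.7 of \cite{maulen2024tikhonov} (part~(ii)), so there is no in-paper argument to compare against. Your reconstruction is correct and is essentially the argument of those references: part~(i) is the standard Kurdyka--\L ojasiewicz desingularization, bounding the length of the steepest-descent trajectory by $\frac{\tilde C}{1-\tau}(f(x)-f(x_\ast))^{1-\tau}$ and hence the distance to $\argmin f$; part~(ii) correctly combines $f(x_\lambda)-f(x_\ast)\le\frac{\lambda}{2}\|x_\ast\|_\cX^2$ and $\|x_\lambda\|_\cX\le\|x_\ast\|_\cX$ with the error bound and with the projection inequality $\|y-x_\ast\|_\cX^2\le\|y\|_\cX^2-\|x_\ast\|_\cX^2$ for $y\in\argmin f$ (valid because $x_\ast$ is the projection of $0$ onto the closed convex set $\argmin f$), which is exactly where the exponent halves from $\frac1\rho$ to $\frac{1}{2\rho}$. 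Your closing remarks correctly identify the two delicate points.

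Two minor caveats, neither fatal. First, the lemma as stated assumes only that $f$ is differentiable and convex, so the classical gradient flow is not immediately available and your discrete fallback $y_{k+1}=y_k-h\nabla f(y_k)$ with $h<1/L$ silently reintroduces $L$-smoothness; in the paper's setting this is harmless (Assumption~\ref{ass:smoothconvex} provides $L$), and in full generality one would use the semigroup generated by the maximal monotone operator $\partial f$, or the purely metric proof of \cite{bolte2017error}. Second, in part~(i) you should make explicit why the finite-length limit $\bar x$ is a minimizer: if $f(y(t))-f(x_\ast)$ stayed above some $\delta>0$, then \eqref{eq:lojaapp} would force $\|\dot y(t)\|_\cX\ge\delta^{\tau}/\tilde C$ and the desingularized value $\phi(g(t))$ would decrease at a rate bounded away from zero, contradicting $\phi\ge0$; hence $f(y(t))\downarrow f(x_\ast)$ and continuity gives $\bar x\in\argmin f$. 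Convexity is not the operative ingredient there.
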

Finally, we note that in linear inverse problems a \L ojasiewicz condition can be verified under the source condition discussed before, see \cite[Theorem 5.10]{Garrigos2023}. 

\end{document}